\documentclass[numbers=enddot,12pt,final,onecolumn,notitlepage]{scrartcl}%
\usepackage[headsepline,footsepline,manualmark]{scrlayer-scrpage}
\usepackage[all,cmtip]{xy}
\usepackage{amssymb}
\usepackage{amsmath}
\usepackage{amsthm}
\usepackage{framed}
\usepackage{comment}
\usepackage{color}
\usepackage{hyperref}
\usepackage[sc]{mathpazo}
\usepackage[T1]{fontenc}
\usepackage{tikz}
\usepackage{needspace}
\usepackage{tabls}
\providecommand{\U}[1]{\protect\rule{.1in}{.1in}}
\usetikzlibrary{arrows}
\newcounter{exer}
\newcounter{exera}
\theoremstyle{definition}
\newtheorem{theo}{Theorem}[section]
\newenvironment{theorem}[1][]
{\begin{theo}[#1]\begin{leftbar}}
{\end{leftbar}\end{theo}}
\newtheorem{lem}[theo]{Lemma}
\newenvironment{lemma}[1][]
{\begin{lem}[#1]\begin{leftbar}}
{\end{leftbar}\end{lem}}
\newtheorem{prop}[theo]{Proposition}

\newtheorem{defi}[theo]{Definition}
\newenvironment{definition}[1][]
{\begin{defi}[#1]\begin{leftbar}}
{\end{leftbar}\end{defi}}
\newtheorem{remk}[theo]{Remark}

\newtheorem{coro}[theo]{Corollary}
\newenvironment{corollary}[1][]
{\begin{coro}[#1]\begin{leftbar}}
{\end{leftbar}\end{coro}}
\newtheorem{conv}[theo]{Convention}
\newenvironment{convention}[1][]
{\begin{conv}[#1]\begin{leftbar}}
{\end{leftbar}\end{conv}}
\newtheorem{quest}[theo]{Question}
\newenvironment{question}[1][]
{\begin{quest}[#1]\begin{leftbar}}
{\end{leftbar}\end{quest}}
\newtheorem{warn}[theo]{Warning}
\newenvironment{warning}[1][]
{\begin{warn}[#1]\begin{leftbar}}
{\end{leftbar}\end{warn}}
\newtheorem{conj}[theo]{Conjecture}

\newtheorem{exam}[theo]{Example}
\newenvironment{example}[1][]
{\begin{exam}[#1]\begin{leftbar}}
{\end{leftbar}\end{exam}}
\newtheorem{exmp}[exer]{Exercise}

\newtheorem{exetwo}[exera]{Additional exercise}

\newenvironment{statement}{\begin{quote}}{\end{quote}}
\let\sumnonlimits\sum
\let\prodnonlimits\prod
\let\cupnonlimits\bigcup
\let\capnonlimits\bigcap
\renewcommand{\sum}{\sumnonlimits\limits}
\renewcommand{\prod}{\prodnonlimits\limits}
\renewcommand{\bigcup}{\cupnonlimits\limits}
\renewcommand{\bigcap}{\capnonlimits\limits}
\newenvironment{verlong}{}{}
\newenvironment{vershort}{}{}

\excludecomment{verlong}
\includecomment{vershort}
\excludecomment{noncompile}

\ihead{Noncommutative Abel-like identities}
\ohead{page \thepage}
\begin{document}

\title{Noncommutative Abel-like identities}
\author{Darij Grinberg}
\date{2017 (corrected April 14, 2026) }
\maketitle

\begin{abstract}
\textbf{Abstract.} We generalize the Abel--Hurwitz identities to an almost
entirely noncommutative setting. Namely, let $V$ be a finite set of size $n$,
and let $\mathbb{L}$ be any noncommutative ring. For each $s\in V$, let
$x_{s}\in\mathbb{L}$. Set $x\left(  S\right)  :=\sum_{s\in S}x_{s}$ for any
$S\subseteq V$. Let $X$ and $Y$ be two elements of $\mathbb{L}$ such that
$X+Y$ lies in the center of $\mathbb{L}$. Then, we show that%
\begin{align*}
&  \sum_{S\subseteq V}\left(  X+x\left(  S\right)  \right)  ^{\left\vert
S\right\vert }\left(  Y-x\left(  S\right)  \right)  ^{n-\left\vert
S\right\vert }=\sum_{\substack{i_{1},i_{2},\ldots,i_{k}\in V\text{ distinct}%
}}\left(  X+Y\right)  ^{n-k}x_{i_{1}}x_{i_{2}}\cdots x_{i_{k}};\\
&  \sum_{S\subseteq V}X\left(  X+x\left(  S\right)  \right)  ^{\left\vert
S\right\vert -1}\left(  Y-x\left(  S\right)  \right)  ^{n-\left\vert
S\right\vert }=\left(  X+Y\right)  ^{n};\\
&  \sum_{S\subseteq V}X\left(  X+x\left(  S\right)  \right)  ^{\left\vert
S\right\vert -1}\left(  Y-x\left(  S\right)  \right)  ^{n-\left\vert
S\right\vert -1}\left(  Y-x\left(  V\right)  \right)  =\left(  X+Y-x\left(
V\right)  \right)  \left(  X+Y\right)  ^{n-1}.
\end{align*}
(Negative powers are understood to be cancelled by other factors.)

\end{abstract}
\tableofcontents

\section{Introduction}

In this (self-contained) note, we are going to prove three identities that
hold in arbitrary unital noncommutative rings, and generalize some well-known
combinatorial identities (known as the \textit{Abel--Hurwitz identities}).

In their simplest and least general versions, the identities we are
generalizing are equalities between polynomials in $\mathbb{Z}\left[
X,Y,Z\right]  $; namely, they state that
\begin{align}
\sum_{k=0}^{n}\dbinom{n}{k}\left(  X+kZ\right)  ^{k}\left(  Y-kZ\right)
^{n-k}  &  =\sum_{k=0}^{n}\dfrac{n!}{k!}\left(  X+Y\right)  ^{k}%
Z^{n-k};\label{eq.abel.1}\\
\sum_{k=0}^{n}\dbinom{n}{k}X\left(  X+kZ\right)  ^{k-1}\left(  Y-kZ\right)
^{n-k}  &  =\left(  X+Y\right)  ^{n};\label{eq.abel.2}\\
\sum_{k=0}^{n}\dbinom{n}{k}X\left(  X+kZ\right)  ^{k-1}Y\left(  Y+\left(
n-k\right)  Z\right)  ^{n-k-1}  &  =\left(  X+Y\right)  \left(  X+Y+nZ\right)
^{n-1} \label{eq.abel.3}%
\end{align}
for every nonnegative integer $n$.\ \ \ \ \footnote{The pedantic reader will
have observed that two of these identities contain \textquotedblleft
fractional\textquotedblright\ terms like $X^{-1}$ and $Y^{-1}$ and thus should
be regarded as identities in the function field $\mathbb{Q}\left(
X,Y,Z\right)  $ rather than in the polynomial ring $\mathbb{Z}\left[
X,Y,Z\right]  $. However, this is a false alarm, because all these
\textquotedblleft fractional\textquotedblright\ terms are cancelled. For
example, the addend for $k=0$ in the sum on the left hand side of
(\ref{eq.abel.2}) contains the \textquotedblleft fractional\textquotedblright%
\ term $\left(  X+0Z\right)  ^{0-1}=X^{-1}$, but this term is cancelled by the
factor $X$ directly to its left. Similarly, all the other \textquotedblleft
fractional\textquotedblright\ terms disappear. Thus, all three identities are
actually identities in $\mathbb{Z}\left[  X,Y,Z\right]  $.} These identities
have a long history; for example, (\ref{eq.abel.2}) goes back to Abel
\cite{Abel26}, who observed that it is a generalization of the binomial
formula (obtained by specializing $Z$ to $0$). The equality (\ref{eq.abel.1})
is ascribed to Cauchy in Riordan's text \cite[\S 1.5, Cauchy's identity]%
{Riorda68} (at least in the specialization $Z=1$; but the general version can
be recovered from this specialization by dehomogenization). The equality
(\ref{eq.abel.3}) is also well-known in combinatorics, and tends to appear in
the context of tree enumeration (see, e.g., \cite[Theorem 2]{Grinbe17}) and of
umbral calculus (see, e.g., \cite[Section 2.6, Example 3]{Roman84}).

The identities (\ref{eq.abel.1}), (\ref{eq.abel.2}) and (\ref{eq.abel.3}) have
been generalized by various authors in different directions. The most famous
generalization is due to Hurwitz \cite{Hurwit02}, who replaced $Z$ by $n$
commuting indeterminates $Z_{1},Z_{2},\ldots,Z_{n}$. More precisely, the
equalities (IV), (II) and (III) in \cite{Hurwit02} say (in a more modern
language) that if $n$ is a nonnegative integer and $V$ denotes the set
$\left\{  1,2,\ldots,n\right\}  $, then%
\begin{equation}
\sum_{S\subseteq V}\left(  X+\sum_{s\in S}Z_{s}\right)  ^{\left\vert
S\right\vert }\left(  Y-\sum_{s\in S}Z_{s}\right)  ^{n-\left\vert S\right\vert
}=\sum_{\substack{i_{1},i_{2},\ldots,i_{k}\text{ are}\\\text{distinct}%
\\\text{elements of }V}}\left(  X+Y\right)  ^{n-k}Z_{i_{1}}Z_{i_{2}}\cdots
Z_{i_{k}};\ \ \ \ \ \label{eq.hurwitz.1}%
\end{equation}%
\begin{equation}
\sum_{S\subseteq V}X\left(  X+\sum_{s\in S}Z_{s}\right)  ^{\left\vert
S\right\vert -1}\left(  Y-\sum_{s\in S}Z_{s}\right)  ^{n-\left\vert
S\right\vert }=\left(  X+Y\right)  ^{n}; \label{eq.hurwitz.2}%
\end{equation}%
\begin{equation}
\sum_{S\subseteq V}X\left(  X+\sum_{s\in S}Z_{s}\right)  ^{\left\vert
S\right\vert -1}Y\left(  Y+\sum_{s\in V\setminus S}Z_{s}\right)
^{n-\left\vert S\right\vert -1}=\left(  X+Y\right)  \left(  X+Y+\sum_{s\in
V}Z_{s}\right)  ^{n-1} \label{eq.hurwitz.3}%
\end{equation}
in the polynomial ring $\mathbb{Z}\left[  X,Y,Z_{1},Z_{2},\ldots,Z_{n}\right]
$.\ \ \ \ \footnote{The sum on the right hand side of (\ref{eq.hurwitz.1})
ranges over all nonnegative integers $k$ and all $k$-tuples $\left(
i_{1},i_{2},\ldots,i_{k}\right)  $ of distinct elements of $V$. This includes
the case of $k=0$ and the empty $0$-tuple (which contributes the addend
$\left(  X+Y\right)  ^{n-0}\left(  \text{empty product}\right)  =\left(
X+Y\right)  ^{n}$). Notice that many of the addends in this sum will be equal
(indeed, if two $k$-tuples $\left(  i_{1},i_{2},\ldots,i_{k}\right)  $ and
$\left(  j_{1},j_{2},\ldots,j_{k}\right)  $ are permutations of each other,
then they produce equal addends).
\par
Once again, \textquotedblleft fractional\textquotedblright\ terms appear in
two of these identities, but are all cancelled.} It is easy to see that
setting all indeterminates $Z_{1},Z_{2},\ldots,Z_{n}$ equal to a single
indeterminate $Z$ transforms these three identities (\ref{eq.hurwitz.1}),
(\ref{eq.hurwitz.2}) and (\ref{eq.hurwitz.3}) into the original three
identities (\ref{eq.abel.1}), (\ref{eq.abel.2}) and (\ref{eq.abel.3}). We note
that combinatorial proofs of (\ref{eq.hurwitz.3}) were given by Francon in
\cite[Proposition 3.5]{Francon74} and by Shapiro in \cite{Shapir91}.

In this note, we shall show that the three identities (\ref{eq.hurwitz.1}),
(\ref{eq.hurwitz.2}) and (\ref{eq.hurwitz.3}) can be further generalized to a
noncommutative setting: Namely, the commuting indeterminates $X,Y,Z_{1}%
,Z_{2},\ldots,Z_{n}$ can be replaced by arbitrary elements $X,Y,x_{1}%
,x_{2},\ldots,x_{n}$ of any noncommutative ring $\mathbb{L}$, provided that a
centrality assumption holds (for the identities (\ref{eq.hurwitz.1}) and
(\ref{eq.hurwitz.2}), the sum $X+Y$ needs to lie in the center of $\mathbb{L}%
$, whereas for (\ref{eq.hurwitz.3}), the sum $X+Y+\sum_{s\in V}x_{s}$ needs to
lie in the center of $\mathbb{L}$), and provided that the product $Y\left(
Y+\sum_{s\in V\setminus S}Z_{s}\right)  ^{n-\left\vert S\right\vert -1}$ in
(\ref{eq.hurwitz.3}) is replaced by $\left(  Y+\sum_{s\in V\setminus S}%
x_{s}\right)  ^{n-\left\vert S\right\vert -1}Y$. These generalized versions of
(\ref{eq.hurwitz.1}), (\ref{eq.hurwitz.2}) and (\ref{eq.hurwitz.3}) are
Theorem \ref{thm.1}, Theorem \ref{thm.2} and Theorem \ref{thm.5} below, and
will be proven by a not-too-complicated induction on $n$.

\subsection*{Acknowledgments}

This note was prompted by an enumerative result of Gjergji Zaimi
\cite{Zaimi17}. The computer algebra SageMath \cite{SageMath} (specifically,
its \texttt{FreeAlgebra} class) was used to make conjectures. Thanks to Dennis
Stanton for making me aware of \cite{Johns96}.

\section{The identities}

Let us now state our results.

\begin{convention}
Let $\mathbb{L}$ be a noncommutative ring with unity.
\end{convention}

We claim that the following four theorems hold:\footnote{We promised three
identities, but we are stating four theorems. This is not a mistake, since
Theorem \ref{thm.5} is just an equivalent version of Theorem \ref{thm.4} (more
precisely, it is obtained from Theorem \ref{thm.4} by replacing $Y$ with
$Y+\sum_{s\in V}x_{s}$) and so should not be considered a separate identity.
We are stating these two theorems on an equal footing since we have no opinion
on which of them is the \textquotedblleft better\textquotedblright\ one.}

\begin{theorem}
\label{thm.1}Let $V$ be a finite set. Let $n=\left\vert V\right\vert $. For
each $s\in V$, let $x_{s}$ be an element of $\mathbb{L}$. Let $X$ and $Y$ be
two elements of $\mathbb{L}$ such that $X+Y$ lies in the center of
$\mathbb{L}$. Then,%
\[
\sum_{S\subseteq V}\left(  X+\sum_{s\in S}x_{s}\right)  ^{\left\vert
S\right\vert }\left(  Y-\sum_{s\in S}x_{s}\right)  ^{n-\left\vert S\right\vert
}=\sum_{\substack{i_{1},i_{2},\ldots,i_{k}\text{ are}\\\text{distinct}%
\\\text{elements of }V}}\left(  X+Y\right)  ^{n-k}x_{i_{1}}x_{i_{2}}\cdots
x_{i_{k}}.
\]
(Here, the sum on the right hand side ranges over all nonnegative integers $k$
and all $k$-tuples $\left(  i_{1},i_{2},\ldots,i_{k}\right)  $ of distinct
elements of $V$. In particular, it has an addend corresponding to $k=0$ and
$\left(  i_{1},i_{2},\ldots,i_{k}\right)  =\left(  {}\right)  $ (the empty
$0$-tuple); this addend is $\underbrace{\left(  X+Y\right)  ^{n-0}}_{=\left(
X+Y\right)  ^{n}}\cdot\underbrace{\left(  \text{empty product}\right)  }%
_{=1}=\left(  X+Y\right)  ^{n}$.)
\end{theorem}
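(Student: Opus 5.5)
The plan is a direct computation rather than an induction. We rewrite each addend as a polynomial in the central element $Z:=X+Y$ with coefficients that are powers of a single element, collect terms by powers of $Z$, and prove the resulting identity one power of $Z$ at a time by inclusion--exclusion. Matching coefficients is enough for the equality: we are not asserting that $Z$ is an indeterminate, only that both sides can be written as $\sum_m Z^{n-m}c_m$ with the same $c_m$.

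\textbf{Step 1 (rewriting).} For $S\subseteq V$ set $A_S:=X+\sum_{s\in S}x_s$. Then $Y-\sum_{s\in S}x_s=Z-A_S$. Since $Z$ is central, $Z$ commutes with $A_S$, so the binomial formula applies:
\[
\left(Z-A_S\right)^{n-\left\vert S\right\vert}=\sum_{j}\dbinom{n-\left\vert S\right\vert}{j}(-1)^jZ^{n-\left\vert S\right\vert-j}A_S^j .
\]
Hence the addend $A_S^{\left\vert S\right\vert}\left(Z-A_S\right)^{n-\left\vert S\right\vert}$ equals $\sum_j\binom{n-\left\vert S\right\vert}{j}(-1)^jZ^{n-\left\vert S\right\vert-j}A_S^{\left\vert S\right\vert+j}$. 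We substitute $m=\left\vert S\right\vert+j$ and collect by powers of $Z$. The left hand side of Theorem \ref{thm.1} becomes $\sum_{m=0}^{n}Z^{n-m}c_m$, where
\[
c_m=\sum_{S\subseteq V,\ \left\vert S\right\vert\le m}(-1)^{m-\left\vert S\right\vert}\dbinom{n-\left\vert S\right\vert}{m-\left\vert S\right\vert}A_S^{m}.
\]
The right hand side is already of the form $\sum_{m}Z^{n-m}d_m$, where $d_m$ is the sum of $x_{i_1}\cdots x_{i_m}$ over all $m$-tuples of distinct elements of $V$. So it suffices to prove $c_m=d_m$ for each $m$.

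\textbf{Step 2 (reindexing).} The number $\binom{n-\left\vert S\right\vert}{m-\left\vert S\right\vert}$ counts the $m$-element subsets $T\subseteq V$ with $T\supseteq S$. Therefore
\[
c_m=\sum_{T\subseteq V,\ \left\vert T\right\vert=m}\ \sum_{S\subseteq T}(-1)^{\left\vert T\right\vert-\left\vert S\right\vert}\left(X+\sum_{s\in S}x_s\right)^{m}.
\]

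\textbf{Step 3 (inclusion--exclusion).} Fix $T$. Expanding the $m$-th power noncommutatively gives $\left(X+\sum_{s\in S}x_s\right)^m=\sum_w w$, where $w$ ranges over words $w=w_1w_2\cdots w_m$ with each letter $w_i\in\{X\}\cup\{x_s:s\in S\}$; different labels $s$ are treated as distinct letters even if the $x_s$ coincide as ring elements. Now consider a word $w$ over the alphabet $\{X\}\cup\{x_s:s\in T\}$, and let $U\subseteq T$ be the set of labels it uses. This word occurs for exactly those $S$ with $U\subseteq S\subseteq T$. Its total coefficient is therefore $\sum_{U\subseteq S\subseteq T}(-1)^{\left\vert T\right\vert-\left\vert S\right\vert}$, which is $1$ if $U=T$ and $0$ otherwise. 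So only words using every label of $T$ survive. Such a word has length $m=\left\vert T\right\vert$, so it contains no $X$ and uses each $s\in T$ exactly once. Hence the inner sum equals $\sum x_{i_1}\cdots x_{i_m}$ over all orderings $(i_1,\ldots,i_m)$ of $T$. Summing over $T$ gives $d_m$, and the theorem follows.

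The one point needing care is Step 1: we use that $Z$ commutes with $A_S$ to apply the binomial theorem and to move every $Z$-power to the left. This is exactly where the centrality of $X+Y$ is used. After that, the argument is purely formal word-counting, and noncommutativity causes no trouble because the words keep track of the order of the factors.
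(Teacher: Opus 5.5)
Your proof is correct, and it takes a genuinely different route from the paper's.

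\textbf{The paper's route.} The paper proves Theorem \ref{thm.1} by a simultaneous induction on $n$ together with Theorem \ref{thm.2}. It fixes $t\in V$ and rewrites the part of the sum with $t\in S$ using the induction hypothesis on $V\setminus\{t\}$ twice: once with $(X,Y)$, and once with the shifted pair $(X+x_t,\,Y-x_t)$. The two right-hand sides coincide because they depend only on $X+Y$. From this it derives Theorem \ref{thm.2} for $n$. It then gets Theorem \ref{thm.1} by splitting $A_S^{|S|}=\bigl(X+\sum_{t\in S}x_t\bigr)A_S^{|S|-1}$ for $S\neq\varnothing$ into an $X$-part, handled by Theorem \ref{thm.2}, and a $\sum_{t\in S}x_t$-part, handled by the $t\in S$ computation.

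\textbf{Your route.} Your argument is non-inductive. After expanding in powers of the central element $Z$ and regrouping by $m$-subsets $T$, the coefficient identity $c_m=d_m$ is exactly the polarization identity of Corollary \ref{cor.polar1} applied to each $T$. You prove that identity directly by inclusion--exclusion on words. This reverses the paper's logic, where Corollary \ref{cor.polar1} is deduced from Theorem \ref{thm.1} by setting $Y=-X$. It also shows Theorem \ref{thm.1} to be the binomial ``homogenization'' of the polarization identity.

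\textbf{What each buys.} Your route is self-contained and does not need Theorem \ref{thm.2} as a companion statement. It isolates the single step where centrality enters, and it explains where the right-hand side comes from. The paper's route avoids binomial coefficients and word expansions, and it delivers Theorem \ref{thm.2} (hence Theorems \ref{thm.4} and \ref{thm.5}) from the same induction. Your method recovers Theorem \ref{thm.2} as well. The same expansion makes the coefficient of $Z^{n-m}$, for $m\geq 1$, equal to $X\sum_{|T|=m}\sum_{S\subseteq T}(-1)^{|T|-|S|}A_S^{m-1}$; the $S=\varnothing$ addend $Y^n=(Z-X)^n$ fits the same pattern. This vanishes because a word of length $m-1$ cannot use all $m$ labels of $T$, which is the content of Corollary \ref{cor.polar2}.
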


\begin{example}
In the case when $V=\left\{  1,2\right\}  $, the claim of Theorem \ref{thm.1}
takes the following form (for any two elements $x_{1}$ and $x_{2}$ of
$\mathbb{L}$, and any two elements $X$ and $Y$ of $\mathbb{L}$ such that $X+Y$
lies in the center of $\mathbb{L}$):%
\begin{align*}
&  X^{0}Y^{2}+\left(  X+x_{1}\right)  ^{1}\left(  Y-x_{1}\right)  ^{1}+\left(
X+x_{2}\right)  ^{1}\left(  Y-x_{2}\right)  ^{1}\\
&  \ \ \ \ \ \ \ \ \ \ +\left(  X+x_{1}+x_{2}\right)  ^{2}\left(  Y-\left(
x_{1}+x_{2}\right)  \right)  ^{0}\\
&  =\left(  X+Y\right)  ^{2}+\left(  X+Y\right)  ^{1}x_{1}+\left(  X+Y\right)
^{1}x_{2}+\left(  X+Y\right)  ^{0}x_{1}x_{2}+\left(  X+Y\right)  ^{0}%
x_{2}x_{1}.
\end{align*}
If we try to verify this identity by subtracting the right hand side from the
left hand side and expanding, we can quickly realize that it boils down to%
\[
\left[  x_{1}+x_{2}+X,X+Y\right]  =0,
\]
where $\left[  a,b\right]  $ denotes the commutator of two elements $a$ and
$b$ of $\mathbb{L}$ (that is, $\left[  a,b\right]  =ab-ba$). Since $X+Y$ is
assumed to lie in the center of $\mathbb{L}$, this equality is correct. This
example shows that the requirement that $X+Y$ should lie in the center of
$\mathbb{L}$ cannot be lifted from Theorem \ref{thm.1}.

This example might suggest that we can replace this requirement by the weaker
condition that $\left[  \sum_{s\in V}x_{s}+X,X+Y\right]  =0$; but this would
not suffice for $n=3$.
\end{example}

\begin{theorem}
\label{thm.2}Let $V$ be a finite set. Let $n=\left\vert V\right\vert $. For
each $s\in V$, let $x_{s}$ be an element of $\mathbb{L}$. Let $X$ and $Y$ be
two elements of $\mathbb{L}$ such that $X+Y$ lies in the center of
$\mathbb{L}$. Then,%
\[
\sum_{S\subseteq V}X\left(  X+\sum_{s\in S}x_{s}\right)  ^{\left\vert
S\right\vert -1}\left(  Y-\sum_{s\in S}x_{s}\right)  ^{n-\left\vert
S\right\vert }=\left(  X+Y\right)  ^{n}.
\]
(Here, the product $X\left(  X+\sum_{s\in S}x_{s}\right)  ^{\left\vert
S\right\vert -1}$ has to be interpreted as $1$ when $S=\varnothing$.)
\end{theorem}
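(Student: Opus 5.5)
We prove Theorem \ref{thm.2} by strong induction on $n=\left\vert V\right\vert$, simultaneously for all $X,Y$ with $X+Y$ central. Write $x\left(S\right)=\sum_{s\in S}x_{s}$ and $u=X+Y$, which is central. The case $n=0$ is trivial: only $S=\varnothing$ occurs, and it contributes $1=u^{0}$.

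For $n\geq1$, first treat the addend for $S=\varnothing$ separately; it equals $Y^{n}$. For $S\neq\varnothing$, apply a telescoping (``first step'') decomposition on the leftmost factor. Fix $S$ and write $X\left(X+x\left(S\right)\right)^{\left\vert S\right\vert-1}$. We try to reduce the exponent of $Y-x\left(S\right)$ using $Y-x\left(S\right)=u-\left(X+x\left(S\right)\right)$, which holds because $u$ is central. This gives
\[
\left(X+x(S)\right)^{a}\left(Y-x(S)\right)^{b}=u\left(X+x(S)\right)^{a}\left(Y-x(S)\right)^{b-1}-\left(X+x(S)\right)^{a+1}\left(Y-x(S)\right)^{b-1}.
\]
The cleaner route, however, is to derive Theorem \ref{thm.2} from Theorem \ref{thm.1}, which we are allowed to assume.

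The key observation is that $X\left(X+x(S)\right)^{|S|-1}=\left(X+x(S)\right)^{|S|}-x(S)\left(X+x(S)\right)^{|S|-1}$, and $x(S)=\sum_{t\in S}x_{t}$. So the left hand side of Theorem \ref{thm.2} equals the left hand side of Theorem \ref{thm.1} minus
\[
\sum_{t\in V}x_{t}\sum_{t\in S\subseteq V}\left(X+x(S)\right)^{|S|-1}\left(Y-x(S)\right)^{n-|S|}.
\]
For fixed $t$, substitute $S=T\cup\left\{t\right\}$ with $T\subseteq V\setminus\left\{t\right\}$. Put $X'=X+x_{t}$ and $Y'=Y-x_{t}$; then $X'+Y'=u$ is still central. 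The inner sum becomes exactly the left hand side of Theorem \ref{thm.1} for the set $V\setminus\left\{t\right\}$ of size $n-1$, with $X',Y'$. Hence it equals $\sum u^{n-1-k}x_{i_{1}}\cdots x_{i_{k}}$, summed over distinct tuples avoiding $t$. Multiplying on the left by $x_{t}$ and summing over $t$ yields $\sum u^{n-1-k}x_{t}x_{i_{1}}\cdots x_{i_{k}}$; here centrality of $u$ is used to move it left past $x_{t}$. This is precisely the sum over all nonempty distinct tuples $\left(j_{1},\ldots,j_{m}\right)$ of $u^{n-m}x_{j_{1}}\cdots x_{j_{m}}$.

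Subtracting this from the right hand side of Theorem \ref{thm.1} leaves only the empty-tuple term $u^{n}$, which is the claim. No induction is actually needed beyond Theorem \ref{thm.1} itself.

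The main point to check carefully is the $S=\varnothing$ convention. There the left side of Theorem \ref{thm.2} has the factor interpreted as $1$, giving $Y^{n}$. In the decomposition above, the $S=\varnothing$ addend of Theorem \ref{thm.1} is also $Y^{n}$, and no $t$-term arises since $x(\varnothing)=0$, so the bookkeeping is consistent. One should also verify that the identity $X\left(X+x(S)\right)^{|S|-1}=\left(X+x(S)\right)^{|S|}-x(S)\left(X+x(S)\right)^{|S|-1}$ is valid noncommutatively for $|S|\geq1$. It is, since it just factors $\left(X+x(S)\right)$ on the left.
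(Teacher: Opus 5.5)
Your algebra is correct: the splitting $X(X+x(S))^{|S|-1}=(X+x(S))^{|S|}-x(S)(X+x(S))^{|S|-1}$, the substitution $S=T\cup\{t\}$ with $X'=X+x_t$, $Y'=Y-x_t$, and the regrouping of $\sum_{t}x_t\,u^{n-1-k}x_{i_1}\cdots x_{i_k}$ into the nonempty-tuple part of the right hand side of Theorem \ref{thm.1} all check out.

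The gap is logical: you cannot take Theorem \ref{thm.1} as given here. In the paper, Theorems \ref{thm.1} and \ref{thm.2} are proved by a simultaneous induction. The step that gives Theorem \ref{thm.1} for size $n$ uses Theorem \ref{thm.2} for the \emph{same} size $n$, and it does so by exactly your decomposition read in the opposite direction. So your argument only shows that, once Theorem \ref{thm.1} is known for size $n-1$, Theorems \ref{thm.1} and \ref{thm.2} for size $n$ are equivalent. It proves neither. Your remark that ``no induction is actually needed beyond Theorem \ref{thm.1} itself'' just moves the whole difficulty into Theorem \ref{thm.1}.

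What is missing is an inductive step for Theorem \ref{thm.2} that does not go through Theorem \ref{thm.1} at size $n$. This is essentially the route you started in your first paragraph and then dropped. Fix $t\in V$. Apply Theorem \ref{thm.1} on $V\setminus\{t\}$ once with $(X+x_t,Y-x_t)$ and once with $(X,Y)$. Both give the same right hand side, since it depends only on $X+Y$, hence
\[
\sum_{\substack{S\subseteq V;\\t\in S}}X\left(X+x(S)\right)^{|S|-1}\left(Y-x(S)\right)^{n-|S|}=\sum_{S\subseteq V\setminus\{t\}}X\left(X+x(S)\right)^{|S|}\left(Y-x(S)\right)^{n-1-|S|}.
\]
Now add the subsets $S\not\ni t$. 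Each $S\subseteq V\setminus\{t\}$ then contributes $X(X+x(S))^{|S|-1}\bigl((X+x(S))+(Y-x(S))\bigr)(Y-x(S))^{n-1-|S|}$. For $S=\varnothing$, read this as $XY^{n-1}+Y^{n}=(X+Y)Y^{n-1}$. The middle factor is the central element $X+Y$. Pull it out and apply Theorem \ref{thm.2} to $V\setminus\{t\}$; this gives $(X+Y)(X+Y)^{n-1}$.

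This proves Theorem \ref{thm.2} for size $n$ from both theorems at size $n-1$. Your decomposition then yields Theorem \ref{thm.1} at size $n$, which closes the simultaneous induction. That is precisely the paper's proof.
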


\begin{example}
In the case when $V=\left\{  1,2\right\}  $, the claim of Theorem \ref{thm.2}
takes the following form (for any two elements $x_{1}$ and $x_{2}$ of
$\mathbb{L}$, and any two elements $X$ and $Y$ of $\mathbb{L}$ such that $X+Y$
lies in the center of $\mathbb{L}$):%
\begin{align*}
&  XX^{-1}Y^{2}+X\left(  X+x_{1}\right)  ^{0}\left(  Y-x_{1}\right)
^{1}+X\left(  X+x_{2}\right)  ^{0}\left(  Y-x_{2}\right)  ^{1}\\
&  \ \ \ \ \ \ \ \ \ \ +X\left(  X+x_{1}+x_{2}\right)  ^{1}\left(  Y-\left(
x_{1}+x_{2}\right)  \right)  ^{0}\\
&  =\left(  X+Y\right)  ^{2}.
\end{align*}
(As explained in Theorem \ref{thm.2}, we should interpret the product
$XX^{-1}$ as $1$, so we don't need $X$ to be invertible.) This identity boils
down to $XY=YX$, which is a consequence of $X+Y$ lying in the center of
$\mathbb{L}$. Computations with $n\geq3$ show that merely assuming $XY=YX$
(without requiring that $X+Y$ lie in the center of $\mathbb{L}$) is not sufficient.
\end{example}

\begin{theorem}
\label{thm.4}Let $V$ be a finite set. Let $n=\left\vert V\right\vert $. For
each $s\in V$, let $x_{s}$ be an element of $\mathbb{L}$. Let $X$ and $Y$ be
two elements of $\mathbb{L}$ such that $X+Y$ lies in the center of
$\mathbb{L}$. Then,%
\begin{align*}
&  \sum_{S\subseteq V}X\left(  X+\sum_{s\in S}x_{s}\right)  ^{\left\vert
S\right\vert -1}\left(  Y-\sum_{s\in S}x_{s}\right)  ^{n-\left\vert
S\right\vert -1}\left(  Y-\sum_{s\in V}x_{s}\right) \\
&  =\left(  X+Y-\sum_{s\in V}x_{s}\right)  \left(  X+Y\right)  ^{n-1}.
\end{align*}
(Here,

\begin{itemize}
\item the product $X\left(  X+\sum_{s\in S}x_{s}\right)  ^{\left\vert
S\right\vert -1}$ has to be interpreted as $1$ when $S=\varnothing$;

\item the product $\left(  Y-\sum_{s\in S}x_{s}\right)  ^{n-\left\vert
S\right\vert -1}\left(  Y-\sum_{s\in V}x_{s}\right)  $ has to be interpreted
as $1$ when $\left\vert S\right\vert =n$;

\item the product $\left(  X+Y-\sum_{s\in V}x_{s}\right)  \left(  X+Y\right)
^{n-1}$ has to be interpreted as $1$ when $n=0$.)
\end{itemize}
\end{theorem}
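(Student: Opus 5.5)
I would derive Theorem \ref{thm.4} directly from Theorem \ref{thm.2}, applied twice: once to $V$ itself and once to each set $V\setminus\left\{ t\right\} $. Write $x\left( S\right) =\sum_{s\in S}x_{s}$. The case $n=0$ is trivial, since both sides equal $1$ by the stated conventions. So assume $n\geq1$.

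The key manipulation is to split the last factor as
\[
Y-x\left( V\right) =\left( Y-x\left( S\right) \right) -x\left( V\setminus S\right) .
\]
For $S\neq V$ this splits the addend for $S$ into
\[
X\left( X+x\left( S\right) \right) ^{\left\vert S\right\vert -1}\left( Y-x\left( S\right) \right) ^{n-\left\vert S\right\vert }-X\left( X+x\left( S\right) \right) ^{\left\vert S\right\vert -1}\left( Y-x\left( S\right) \right) ^{n-\left\vert S\right\vert -1}x\left( V\setminus S\right) .
\]
For $S=V$ the addend is just $X\left( X+x\left( V\right) \right) ^{n-1}$, by the convention for $\left\vert S\right\vert =n$. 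This agrees with the first part of the split formula, and the second part is $0$ because $x\left( \varnothing\right) =0$. So the split is valid for all $S$, provided we read the second part as $0$ when $S=V$.

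Summing the first parts over all $S\subseteq V$ gives exactly the left hand side of Theorem \ref{thm.2}, which is $\left( X+Y\right) ^{n}$; here I must check that the $S=\varnothing$ convention matches Theorem \ref{thm.2}. It then remains to show that the sum of the second parts is $x\left( V\right) \left( X+Y\right) ^{n-1}$. Indeed, then the left hand side equals
\[
\left( X+Y\right) ^{n}-x\left( V\right) \left( X+Y\right) ^{n-1}=\left( X+Y-x\left( V\right) \right) \left( X+Y\right) ^{n-1}.
\]

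For the second parts, expand $x\left( V\setminus S\right) =\sum_{t\in V\setminus S}x_{t}$ and interchange the summations. This turns the sum into
\[
\sum_{t\in V}\ \sum_{S\subseteq V\setminus\left\{ t\right\} }X\left( X+x\left( S\right) \right) ^{\left\vert S\right\vert -1}\left( Y-x\left( S\right) \right) ^{\left( n-1\right) -\left\vert S\right\vert }x_{t}.
\]
The inner sum, with the right factor $x_{t}$ pulled out, is exactly the left hand side of Theorem \ref{thm.2} for the $\left( n-1\right) $-element set $V\setminus\left\{ t\right\} $. It therefore equals $\left( X+Y\right) ^{n-1}$, with the same conventions when $S=\varnothing$. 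Hence the second parts sum to $\sum_{t}\left( X+Y\right) ^{n-1}x_{t}=\left( X+Y\right) ^{n-1}x\left( V\right) $, and centrality of $X+Y$ lets us rewrite this as $x\left( V\right) \left( X+Y\right) ^{n-1}$.

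The only delicate step is the bookkeeping of the conventions for formally negative exponents. The cases to check are $S=V$ in the splitting and $S=\varnothing$ in both applications of Theorem \ref{thm.2}. I would verify these boundary addends explicitly rather than rely on formal cancellation, since the argument is otherwise a direct computation.
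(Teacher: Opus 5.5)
Your proposal is correct and is essentially the paper's own proof. It splits $Y-\sum_{s\in V}x_{s}$ as $\left(Y-\sum_{s\in S}x_{s}\right)-\sum_{t\in V\setminus S}x_{t}$, evaluates the first resulting sum by Theorem~\ref{thm.2} for $V$, interchanges the summations in the second and evaluates each inner sum by Theorem~\ref{thm.2} for $V\setminus\{t\}$, and finishes with the centrality of $X+Y$; your explicit check of the $S=V$ and $S=\varnothing$ boundary addends is exactly what the paper's footnote leaves to the reader.
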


\begin{theorem}
\label{thm.5}Let $V$ be a finite set. Let $n=\left\vert V\right\vert $. For
each $s\in V$, let $x_{s}$ be an element of $\mathbb{L}$. Let $X$ and $Y$ be
two elements of $\mathbb{L}$ such that $X+Y+\sum_{s\in V}x_{s}$ lies in the
center of $\mathbb{L}$. Then,%
\begin{align*}
&  \sum_{S\subseteq V}X\left(  X+\sum_{s\in S}x_{s}\right)  ^{\left\vert
S\right\vert -1}\left(  Y+\sum_{s\in V\setminus S}x_{s}\right)  ^{n-\left\vert
S\right\vert -1}Y\\
&  =\left(  X+Y\right)  \left(  X+Y+\sum_{s\in V}x_{s}\right)  ^{n-1}.
\end{align*}
(Here,

\begin{itemize}
\item the product $X\left(  X+\sum_{s\in S}x_{s}\right)  ^{\left\vert
S\right\vert -1}$ has to be interpreted as $1$ when $S=\varnothing$;

\item the product $\left(  Y+\sum_{s\in V\setminus S}x_{s}\right)
^{n-\left\vert S\right\vert -1}Y$ has to be interpreted as $1$ when
$\left\vert S\right\vert =n$;

\item the product $\left(  X+Y\right)  \left(  X+Y+\sum_{s\in V}x_{s}\right)
^{n-1}$ has to be interpreted as $1$ when $n=0$.)
\end{itemize}
\end{theorem}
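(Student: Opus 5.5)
The plan is to derive Theorem \ref{thm.5} from Theorem \ref{thm.4} by a change of variables, as the footnote before Theorem \ref{thm.1} indicates. Assume Theorem \ref{thm.4} has been proven. Given $X$, $Y$ and the $x_{s}$ as in Theorem \ref{thm.5}, set $Y^{\prime}:=Y+\sum_{s\in V}x_{s}$. Then $X+Y^{\prime}=X+Y+\sum_{s\in V}x_{s}$ lies in the center of $\mathbb{L}$. So Theorem \ref{thm.4} applies to $X$, $Y^{\prime}$ and the same $x_{s}$.

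Next I substitute into each factor. For every $S\subseteq V$ we have
\[
Y^{\prime}-\sum_{s\in S}x_{s}=Y+\sum_{s\in V\setminus S}x_{s},\qquad Y^{\prime}-\sum_{s\in V}x_{s}=Y,
\]
\[
X+Y^{\prime}-\sum_{s\in V}x_{s}=X+Y,\qquad X+Y^{\prime}=X+Y+\sum_{s\in V}x_{s}.
\]
Hence the addend for $S$ on the left of Theorem \ref{thm.4} becomes $X\left(X+\sum_{s\in S}x_{s}\right)^{\left\vert S\right\vert -1}\left(Y+\sum_{s\in V\setminus S}x_{s}\right)^{n-\left\vert S\right\vert -1}Y$. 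This is exactly the addend in Theorem \ref{thm.5}. Likewise, the right hand side becomes $\left(X+Y\right)\left(X+Y+\sum_{s\in V}x_{s}\right)^{n-1}$.

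Finally, the degenerate-case conventions match term by term. The case $S=\varnothing$ is untouched by the substitution. For $\left\vert S\right\vert =n$, both theorems interpret the relevant product as $1$. For $n=0$, both right hand sides are interpreted as $1$. So no real obstacle arises in this step; all the work lies in proving Theorem \ref{thm.4}.

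For Theorem \ref{thm.4}, I would induct on $n$, in the same way as for Theorems \ref{thm.1} and \ref{thm.2}. One could also try to express its left side in terms of Theorem \ref{thm.2}, by writing $Y-\sum_{s\in V}x_{s}=\left(Y-\sum_{s\in S}x_{s}\right)-\sum_{s\in V\setminus S}x_{s}$. This splits the sum into a copy of Theorem \ref{thm.2} minus correction terms, which I would handle by induction on subsets. I expect that bookkeeping to be the main difficulty, but it lies outside the present deduction.
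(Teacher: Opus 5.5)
Your deduction is correct and is exactly the paper's proof: apply Theorem \ref{thm.4} with $Y+\sum_{s\in V}x_{s}$ in place of $Y$, and the factors and the degenerate-case conventions match as you describe. The paper proves Theorem \ref{thm.4} by precisely the splitting you sketch, and after swapping the sums each correction term is just Theorem \ref{thm.2} applied to $V\setminus\left\{t\right\}$, so that step needs no further induction.
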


Before we prove these theorems, let us cite some appearances of their
particular cases in the literature:

\begin{itemize}
\item Theorem \ref{thm.1} generalizes \cite[Problem 4]{qedmo09} (which is
obtained by setting $\mathbb{L}=\mathbb{Z}\left[  X,Y\right]  $ and $x_{s}=1$)
and \cite[\S 1.5, Cauchy's identity]{Riorda68} (which is obtained by setting
$\mathbb{L}=\mathbb{Z}\left[  X,Y\right]  $ and $X=x$ and $Y=y+n$ and
$x_{s}=1$).

\item Theorem \ref{thm.2} generalizes \cite[Chapter III, Theorem B]{Comtet74}
(which is obtained by setting $\mathbb{L}=\mathbb{Z}\left[  X,Y\right]  $ and
$x_{s}=z$) and \cite[Theorem 4]{qedmo09} (which is obtained by setting
$\mathbb{L}=\mathbb{Z}\left[  X,Y\right]  $ and $x_{s}=1$) and \cite[(11)]%
{Kalai79} (which is obtained by setting $\mathbb{L}=\mathbb{Z}\left[
x,y\right]  $ and $X=x$ and $Y=n+y$) and \cite[1.3]{KelPos} (which is obtained
by setting $\mathbb{L}=\mathbb{Z}\left[  z,y,x\left(  a\right)  \ \mid\ a\in
V\right]  $ and $X=y$ and $Y=z+x\left(  V\right)  $ and $x_{s}=x\left(
s\right)  $) and \textquotedblleft Hurwitz's formula\textquotedblright\ in
\cite[solution to Section 1.2.6, Exercise 51]{Knuth97} (which is obtained by
setting $V=\left\{  1,2,\ldots,n\right\}  $ and $X=x$ and $Y=y$ and
$x_{s}=z_{s}$) and \cite[\S 1.5, (13)]{Riorda68} (which is obtained by setting
$\mathbb{L}=\mathbb{Z}\left[  X,Y,a\right]  $ and $X=x$ and $Y=y+na$ and
$x_{s}=a$) and \cite[Exercise 5.31 \textbf{b}]{Stanley-EC2} (which is obtained
by setting $\mathbb{L}=\mathbb{Z}\left[  x_{1},x_{2},\ldots,x_{n+2}\right]  $
and $X=x_{n+1}$ and $Y=\sum_{i=1}^{n}x_{i}+x_{n+2}$).

\item Theorem \ref{thm.5} generalizes \cite[Chapter III, Exercise
20]{Comtet74} (which is obtained when $\mathbb{L}$ is commutative) and
\cite[1.2]{KelPos} (which is obtained by setting $\mathbb{L}=\mathbb{Z}\left[
z,y,x\left(  a\right)  \ \mid\ a\in V\right]  $ and $X=y$ and $Y=z$ and
$x_{s}=x\left(  s\right)  $) and \cite[Section 2.3.4.4, Exercise 30]{Knuth97}
(which is obtained by setting $V=\left\{  1,2,\ldots,n\right\}  $ and $X=x$
and $Y=y$ and $x_{s}=z_{s}$).
\end{itemize}

\section{The proofs}

We now come to the proofs of the identities stated above.

\begin{convention}
We shall use the notation $\mathbb{N}$ for the set $\left\{  0,1,2,\ldots
\right\}  $.
\end{convention}

\subsection{Proofs of Theorems \ref{thm.1} and \ref{thm.2}}

\begin{vershort}
\begin{proof}
[Proof of Theorem \ref{thm.1} and Theorem \ref{thm.2}.]We shall prove Theorem
\ref{thm.1} and Theorem \ref{thm.2} together, by a simultaneous induction. The
induction base (the case $n=0$) is left to the reader.

For the \textit{induction step}, we fix a positive integer $n$, and we assume
(as the induction hypothesis) that both Theorem \ref{thm.1} and Theorem
\ref{thm.2} are proven for $n-1$ instead of $n$. We shall now prove Theorem
\ref{thm.1} and Theorem \ref{thm.2} for our number $n$. So let $V$, $x_{s}$,
$X$ and $Y$ be as in Theorem \ref{thm.1} and Theorem \ref{thm.2}.

Fix $t\in V$.

The induction hypothesis shows that Theorem \ref{thm.2} is proven for $n-1$
instead of $n$. We can thus apply Theorem \ref{thm.2} to $V\setminus\left\{
t\right\}  $ instead of $V$ (since the finite set $V\setminus\left\{
t\right\}  $ has size $\left\vert V\setminus\left\{  t\right\}  \right\vert
=n-1$). Thus, we obtain%
\begin{equation}
\sum_{S\subseteq V\setminus\left\{  t\right\}  }X\left(  X+\sum_{s\in S}%
x_{s}\right)  ^{\left\vert S\right\vert -1}\left(  Y-\sum_{s\in S}%
x_{s}\right)  ^{n-1-\left\vert S\right\vert }=\left(  X+Y\right)  ^{n-1}.
\label{pf.thm.2.1}%
\end{equation}

But the induction hypothesis also shows that Theorem \ref{thm.1} is proven for
$n-1$ instead of $n$. Thus, we can apply Theorem \ref{thm.1} to $V\setminus
\left\{  t\right\}  $ instead of $V$ (since the finite set $V\setminus\left\{
t\right\}  $ has size $\left\vert V\setminus\left\{  t\right\}  \right\vert
=n-1$). Thus, we obtain%
\begin{align}
&  \sum_{S\subseteq V\setminus\left\{  t\right\}  }\left(  X+\sum_{s\in
S}x_{s}\right)  ^{\left\vert S\right\vert }\left(  Y-\sum_{s\in S}%
x_{s}\right)  ^{n-1-\left\vert S\right\vert }\nonumber\\
&  =\sum_{\substack{i_{1},i_{2},\ldots,i_{k}\text{ are}\\\text{distinct}%
\\\text{elements of }V\setminus\left\{  t\right\}  }}\left(  X+Y\right)
^{n-1-k}x_{i_{1}}x_{i_{2}}\cdots x_{i_{k}}. \label{pf.thm.2.4}%
\end{align}
Likewise, we can apply Theorem \ref{thm.1} to $V\setminus\left\{  t\right\}
$, $X+x_{t}$ and $Y-x_{t}$ instead of $V$, $X$ and $Y$ (because the finite set
$V\setminus\left\{  t\right\}  $ has size $\left\vert V\setminus\left\{
t\right\}  \right\vert =n-1$, and because the sum $\left(  X+x_{t}\right)
+\left(  Y-x_{t}\right)  =X+Y$ lies in the center of $\mathbb{L}$). We thus
obtain
\begin{align}
&  \sum_{S\subseteq V\setminus\left\{  t\right\}  }\left(  X+x_{t}+\sum_{s\in
S}x_{s}\right)  ^{\left\vert S\right\vert }\left(  Y-x_{t}-\sum_{s\in S}%
x_{s}\right)  ^{n-1-\left\vert S\right\vert }\nonumber\\
&  =\sum_{\substack{i_{1},i_{2},\ldots,i_{k}\text{ are}\\\text{distinct}%
\\\text{elements of }V\setminus\left\{  t\right\}  }}\left(
\underbrace{\left(  X+x_{t}\right)  +\left(  Y-x_{t}\right)  }_{=X+Y}\right)
^{n-1-k}x_{i_{1}}x_{i_{2}}\cdots x_{i_{k}}\nonumber\\
&  =\sum_{\substack{i_{1},i_{2},\ldots,i_{k}\text{ are}\\\text{distinct}%
\\\text{elements of }V\setminus\left\{  t\right\}  }}\left(  X+Y\right)
^{n-1-k}x_{i_{1}}x_{i_{2}}\cdots x_{i_{k}}. \label{pf.thm.2.5}%
\end{align}
Now,%
\begin{align}
&  \sum_{\substack{S\subseteq V;\\t\in S}}\left(  X+\sum_{s\in S}x_{s}\right)
^{\left\vert S\right\vert -1}\left(  Y-\sum_{s\in S}x_{s}\right)
^{n-\left\vert S\right\vert }\nonumber\\
&  =\underbrace{\sum_{\substack{S\subseteq V;\\t\notin S}}}_{=\sum_{S\subseteq
V\setminus\left\{  t\right\}  }}\ \ \underbrace{\left(  X+\sum_{s\in
S\cup\left\{  t\right\}  }x_{s}\right)  ^{\left\vert S\cup\left\{  t\right\}
\right\vert -1}}_{\substack{=\left(  X+x_{t}+\sum_{s\in S}x_{s}\right)
^{\left\vert S\right\vert }\\\text{(since }t\notin S\text{ yields }\sum_{s\in
S\cup\left\{  t\right\}  }x_{s}=x_{t}+\sum_{s\in S}x_{s}\\\text{and
}\left\vert S\cup\left\{  t\right\}  \right\vert =\left\vert S\right\vert
+1\text{)}}}\ \ \underbrace{\left(  Y-\sum_{s\in S\cup\left\{  t\right\}
}x_{s}\right)  ^{n-\left\vert S\cup\left\{  t\right\}  \right\vert }%
}_{\substack{=\left(  Y-x_{t}-\sum_{s\in S}x_{s}\right)  ^{n-1-\left\vert
S\right\vert }\\\text{(since }t\notin S\text{ yields }\sum_{s\in S\cup\left\{
t\right\}  }x_{s}=x_{t}+\sum_{s\in S}x_{s}\\\text{and }\left\vert
S\cup\left\{  t\right\}  \right\vert =\left\vert S\right\vert +1\text{)}%
}}\nonumber\\
&  \ \ \ \ \ \ \ \ \ \ \ \ \ \ \ \ \ \ \ \ \left(
\begin{array}
[c]{c}%
\text{here, we have substituted }S\cup\left\{  t\right\}  \text{ for }S\text{
in the sum, since}\\
\text{the map }\left\{  S\subseteq V\ \mid\ t\notin S\right\}  \rightarrow
\left\{  S\subseteq V\ \mid\ t\in S\right\}  ,\ S\mapsto S\cup\left\{
t\right\} \\
\text{is a bijection}%
\end{array}
\right) \nonumber\\
&  =\sum_{S\subseteq V\setminus\left\{  t\right\}  }\left(  X+x_{t}+\sum_{s\in
S}x_{s}\right)  ^{\left\vert S\right\vert }\left(  Y-x_{t}-\sum_{s\in S}%
x_{s}\right)  ^{n-1-\left\vert S\right\vert }\nonumber\\
&  =\sum_{\substack{i_{1},i_{2},\ldots,i_{k}\text{ are}\\\text{distinct}%
\\\text{elements of }V\setminus\left\{  t\right\}  }}\left(  X+Y\right)
^{n-1-k}x_{i_{1}}x_{i_{2}}\cdots x_{i_{k}}\ \ \ \ \ \ \ \ \ \ \left(  \text{by
(\ref{pf.thm.2.5})}\right) \label{pf.thm.1.3}\\
&  =\sum_{S\subseteq V\setminus\left\{  t\right\}  }\left(  X+\sum_{s\in
S}x_{s}\right)  ^{\left\vert S\right\vert }\left(  Y-\sum_{s\in S}%
x_{s}\right)  ^{n-1-\left\vert S\right\vert } \label{pf.thm.1.4}%
\end{align}
(by (\ref{pf.thm.2.4})). Multiplying both sides of this equality by $X$, we
obtain%
\begin{align}
&  \sum_{\substack{S\subseteq V;\\t\in S}}X\left(  X+\sum_{s\in S}%
x_{s}\right)  ^{\left\vert S\right\vert -1}\left(  Y-\sum_{s\in S}%
x_{s}\right)  ^{n-\left\vert S\right\vert }\nonumber\\
&  =\sum_{S\subseteq V\setminus\left\{  t\right\}  }X\left(  X+\sum_{s\in
S}x_{s}\right)  ^{\left\vert S\right\vert }\left(  Y-\sum_{s\in S}%
x_{s}\right)  ^{n-1-\left\vert S\right\vert }. \label{pf.thm.2.compareX}%
\end{align}

Now,%
\begin{align*}
&  \sum_{S\subseteq V}X\left(  X+\sum_{s\in S}x_{s}\right)  ^{\left\vert
S\right\vert -1}\left(  Y-\sum_{s\in S}x_{s}\right)  ^{n-\left\vert
S\right\vert }\\
&  =\underbrace{\sum_{\substack{S\subseteq V;\\t\in S}}X\left(  X+\sum_{s\in
S}x_{s}\right)  ^{\left\vert S\right\vert -1}\left(  Y-\sum_{s\in S}%
x_{s}\right)  ^{n-\left\vert S\right\vert }}_{\substack{_{\substack{=\sum
_{S\subseteq V\setminus\left\{  t\right\}  }X\left(  X+\sum_{s\in S}%
x_{s}\right)  ^{\left\vert S\right\vert }\left(  Y-\sum_{s\in S}x_{s}\right)
^{n-1-\left\vert S\right\vert }}}\\\text{(by (\ref{pf.thm.2.compareX}))}}}\\
&  \ \ \ \ \ \ \ \ \ \ +\underbrace{\sum_{\substack{S\subseteq V;\\t\notin
S}}}_{=\sum_{S\subseteq V\setminus\left\{  t\right\}  }}X\left(  X+\sum_{s\in
S}x_{s}\right)  ^{\left\vert S\right\vert -1}\underbrace{\left(  Y-\sum_{s\in
S}x_{s}\right)  ^{n-\left\vert S\right\vert }}_{=\left(  Y-\sum_{s\in S}%
x_{s}\right)  \left(  Y-\sum_{s\in S}x_{s}\right)  ^{n-1-\left\vert
S\right\vert }}\\
&  =\sum_{S\subseteq V\setminus\left\{  t\right\}  }X\underbrace{\left(
X+\sum_{s\in S}x_{s}\right)  ^{\left\vert S\right\vert }}_{\substack{=\left(
X+\sum_{s\in S}x_{s}\right)  ^{\left\vert S\right\vert -1}\left(  X+\sum_{s\in
S}x_{s}\right)  \\\text{(it is easy to check that this makes sense}%
\\\text{even for }S=\varnothing\text{, due to the }X\text{ factor on our
left)}}}\left(  Y-\sum_{s\in S}x_{s}\right)  ^{n-1-\left\vert S\right\vert }\\
&  \ \ \ \ \ \ \ \ \ \ +\sum_{S\subseteq V\setminus\left\{  t\right\}
}X\left(  X+\sum_{s\in S}x_{s}\right)  ^{\left\vert S\right\vert -1}\left(
Y-\sum_{s\in S}x_{s}\right)  \left(  Y-\sum_{s\in S}x_{s}\right)
^{n-1-\left\vert S\right\vert }\\
&  =\sum_{S\subseteq V\setminus\left\{  t\right\}  }X\left(  X+\sum_{s\in
S}x_{s}\right)  ^{\left\vert S\right\vert -1}\left(  X+\sum_{s\in S}%
x_{s}\right)  \left(  Y-\sum_{s\in S}x_{s}\right)  ^{n-1-\left\vert
S\right\vert }\\
&  \ \ \ \ \ \ \ \ \ \ +\sum_{S\subseteq V\setminus\left\{  t\right\}
}X\left(  X+\sum_{s\in S}x_{s}\right)  ^{\left\vert S\right\vert -1}\left(
Y-\sum_{s\in S}x_{s}\right)  \left(  Y-\sum_{s\in S}x_{s}\right)
^{n-1-\left\vert S\right\vert }%
\end{align*}%
\begin{align}
&  =\sum_{S\subseteq V\setminus\left\{  t\right\}  }X\left(  X+\sum_{s\in
S}x_{s}\right)  ^{\left\vert S\right\vert -1}\underbrace{\left(  \left(
X+\sum_{s\in S}x_{s}\right)  +\left(  Y-\sum_{s\in S}x_{s}\right)  \right)
}_{=X+Y}\left(  Y-\sum_{s\in S}x_{s}\right)  ^{n-1-\left\vert S\right\vert
}\nonumber\\
&  =\sum_{S\subseteq V\setminus\left\{  t\right\}  }\underbrace{X\left(
X+\sum_{s\in S}x_{s}\right)  ^{\left\vert S\right\vert -1}\left(  X+Y\right)
}_{\substack{=\left(  X+Y\right)  X\left(  X+\sum_{s\in S}x_{s}\right)
^{\left\vert S\right\vert -1}\\\text{(since }X+Y\text{ lies in the center of
}\mathbb{L}\text{)}}}\left(  Y-\sum_{s\in S}x_{s}\right)  ^{n-1-\left\vert
S\right\vert }\nonumber\\
&  =\left(  X+Y\right)  \underbrace{\sum_{S\subseteq V\setminus\left\{
t\right\}  }X\left(  X+\sum_{s\in S}x_{s}\right)  ^{\left\vert S\right\vert
-1}\left(  Y-\sum_{s\in S}x_{s}\right)  ^{n-1-\left\vert S\right\vert }%
}_{\substack{=\left(  X+Y\right)  ^{n-1}\\\text{(by (\ref{pf.thm.2.1}))}%
}}\nonumber\\
&  =\left(  X+Y\right)  \left(  X+Y\right)  ^{n-1}=\left(  X+Y\right)  ^{n}.
\label{pf.thm.2.firstgoal}%
\end{align}

Now, forget that we fixed $t$. We thus have proven the equalities
(\ref{pf.thm.1.3}), (\ref{pf.thm.2.5}) and (\ref{pf.thm.2.firstgoal}) for each
$t\in V$.

The set $V$ is nonempty (since $\left\vert V\right\vert =n$ is positive).
Hence, there exists some $q\in V$. Consider this $q$. Applying
(\ref{pf.thm.2.firstgoal}) to $t=q$, we obtain%
\begin{equation}
\sum_{S\subseteq V}X\left(  X+\sum_{s\in S}x_{s}\right)  ^{\left\vert
S\right\vert -1}\left(  Y-\sum_{s\in S}x_{s}\right)  ^{n-\left\vert
S\right\vert }=\left(  X+Y\right)  ^{n}. \label{pf.thm.2.firstgoal'}%
\end{equation}
Therefore, Theorem \ref{thm.2} is proven for our $n$. It remains to prove
Theorem \ref{thm.1} for our $n$.

From (\ref{pf.thm.2.firstgoal'}), we obtain%
\begin{align}
\left(  X+Y\right)  ^{n}  &  =\sum_{S\subseteq V}X\left(  X+\sum_{s\in S}%
x_{s}\right)  ^{\left\vert S\right\vert -1}\left(  Y-\sum_{s\in S}%
x_{s}\right)  ^{n-\left\vert S\right\vert }\nonumber\\
&  =Y^{n}+\sum_{\substack{S\subseteq V;\\S\neq\varnothing}}X\left(
X+\sum_{s\in S}x_{s}\right)  ^{\left\vert S\right\vert -1}\left(  Y-\sum_{s\in
S}x_{s}\right)  ^{n-\left\vert S\right\vert } \label{pf.thm.2.firstgoal''}%
\end{align}
(here, we have split off the addend for $S=\varnothing$ from the sum).

We have%
\begin{align*}
&  \sum_{S\subseteq V}\left(  X+\sum_{s\in S}x_{s}\right)  ^{\left\vert
S\right\vert }\left(  Y-\sum_{s\in S}x_{s}\right)  ^{n-\left\vert S\right\vert
}\\
&  =Y^{n}+\sum_{\substack{S\subseteq V;\\S\neq\varnothing}}\underbrace{\left(
X+\sum_{s\in S}x_{s}\right)  ^{\left\vert S\right\vert }}_{=\left(
X+\sum_{s\in S}x_{s}\right)  \left(  X+\sum_{s\in S}x_{s}\right)  ^{\left\vert
S\right\vert -1}}\left(  Y-\sum_{s\in S}x_{s}\right)  ^{n-\left\vert
S\right\vert }\\
&  \ \ \ \ \ \ \ \ \ \ \ \ \ \ \ \ \ \ \ \ \left(  \text{here, we have split
off the addend for }S=\varnothing\text{ from the sum}\right) \\
&  =Y^{n}+\sum_{\substack{S\subseteq V;\\S\neq\varnothing}}\left(
X+\underbrace{\sum_{s\in S}x_{s}}_{=\sum_{t\in S}x_{t}}\right)  \left(
X+\sum_{s\in S}x_{s}\right)  ^{\left\vert S\right\vert -1}\left(  Y-\sum_{s\in
S}x_{s}\right)  ^{n-\left\vert S\right\vert }\\
&  =Y^{n}+\sum_{\substack{S\subseteq V;\\S\neq\varnothing}}\underbrace{\left(
X+\sum_{t\in S}x_{t}\right)  \left(  X+\sum_{s\in S}x_{s}\right)  ^{\left\vert
S\right\vert -1}\left(  Y-\sum_{s\in S}x_{s}\right)  ^{n-\left\vert
S\right\vert }}_{=X\left(  X+\sum_{s\in S}x_{s}\right)  ^{\left\vert
S\right\vert -1}\left(  Y-\sum_{s\in S}x_{s}\right)  ^{n-\left\vert
S\right\vert }+\sum_{t\in S}x_{t}\left(  X+\sum_{s\in S}x_{s}\right)
^{\left\vert S\right\vert -1}\left(  Y-\sum_{s\in S}x_{s}\right)
^{n-\left\vert S\right\vert }}\\
&  =\underbrace{Y^{n}+\sum_{\substack{S\subseteq V;\\S\neq\varnothing
}}X\left(  X+\sum_{s\in S}x_{s}\right)  ^{\left\vert S\right\vert -1}\left(
Y-\sum_{s\in S}x_{s}\right)  ^{n-\left\vert S\right\vert }}%
_{\substack{=\left(  X+Y\right)  ^{n}\\\text{(by (\ref{pf.thm.2.firstgoal''}%
))}}}\\
&  \ \ \ \ \ \ \ \ \ \ +\underbrace{\sum_{\substack{S\subseteq V;\\S\neq
\varnothing}}\sum_{t\in S}}_{=\sum_{t\in V}\sum_{\substack{S\subseteq V;\\t\in
S}}}x_{t}\left(  X+\sum_{s\in S}x_{s}\right)  ^{\left\vert S\right\vert
-1}\left(  Y-\sum_{s\in S}x_{s}\right)  ^{n-\left\vert S\right\vert }\\
&  =\left(  X+Y\right)  ^{n}+\sum_{t\in V}x_{t}\underbrace{\sum
_{\substack{S\subseteq V;\\t\in S}}\left(  X+\sum_{s\in S}x_{s}\right)
^{\left\vert S\right\vert -1}\left(  Y-\sum_{s\in S}x_{s}\right)
^{n-\left\vert S\right\vert }}_{\substack{=\sum_{\substack{i_{1},i_{2}%
,\ldots,i_{k}\text{ are}\\\text{distinct}\\\text{elements of }V\setminus
\left\{  t\right\}  }}\left(  X+Y\right)  ^{n-1-k}x_{i_{1}}x_{i_{2}}\cdots
x_{i_{k}}\\\text{(by (\ref{pf.thm.1.3}))}}}\\
&  =\left(  X+Y\right)  ^{n}+\sum_{t\in V}x_{t}\sum_{\substack{i_{1}%
,i_{2},\ldots,i_{k}\text{ are}\\\text{distinct}\\\text{elements of }%
V\setminus\left\{  t\right\}  }}\left(  X+Y\right)  ^{n-1-k}x_{i_{1}}x_{i_{2}%
}\cdots x_{i_{k}}.
\end{align*}
Compared with%
\begin{align*}
&  \sum_{\substack{i_{1},i_{2},\ldots,i_{k}\text{ are}\\\text{distinct}%
\\\text{elements of }V}}\left(  X+Y\right)  ^{n-k}x_{i_{1}}x_{i_{2}}\cdots
x_{i_{k}}\\
&  =\underbrace{\left(  X+Y\right)  ^{n-0}}_{=\left(  X+Y\right)  ^{n}%
}\underbrace{\left(  \text{empty product}\right)  }_{=1}+\sum_{\substack{i_{1}%
,i_{2},\ldots,i_{k}\text{ are}\\\text{distinct}\\\text{elements of }%
V;\\k>0}}\left(  X+Y\right)  ^{n-k}x_{i_{1}}x_{i_{2}}\cdots x_{i_{k}}\\
&  \ \ \ \ \ \ \ \ \ \ \ \ \ \ \ \ \ \ \ \ \left(
\begin{array}
[c]{c}%
\text{here, we have split off the addend}\\
\text{for }k=0\text{ and }\left(  i_{1},i_{2},\ldots,i_{k}\right)  =\left(
{}\right) \\
\text{from the sum}%
\end{array}
\right) \\
&  =\left(  X+Y\right)  ^{n}+\sum_{\substack{i_{1},i_{2},\ldots,i_{k}\text{
are}\\\text{distinct}\\\text{elements of }V;\\k>0}}\left(  X+Y\right)
^{n-k}x_{i_{1}}x_{i_{2}}\cdots x_{i_{k}}\\
&  =\left(  X+Y\right)  ^{n}+\sum_{\substack{i_{1},i_{2},\ldots,i_{k+1}\text{
are}\\\text{distinct}\\\text{elements of }V}}\underbrace{\left(  X+Y\right)
^{n-\left(  k+1\right)  }}_{=\left(  X+Y\right)  ^{n-1-k}}x_{i_{1}}x_{i_{2}%
}\cdots x_{i_{k+1}}\\
&  \ \ \ \ \ \ \ \ \ \ \ \ \ \ \ \ \ \ \ \ \left(  \text{here, we have
substituted }k+1\text{ for }k\text{ in the sum}\right) \\
&  =\left(  X+Y\right)  ^{n}+\sum_{\substack{i_{1},i_{2},\ldots,i_{k+1}\text{
are}\\\text{distinct}\\\text{elements of }V}}\left(  X+Y\right)
^{n-1-k}x_{i_{1}}x_{i_{2}}\cdots x_{i_{k+1}}\\
&  =\left(  X+Y\right)  ^{n}+\underbrace{\sum_{\substack{t,i_{1},i_{2}%
,\ldots,i_{k}\text{ are}\\\text{distinct}\\\text{elements of }V}}}%
_{=\sum_{t\in V}\sum_{\substack{i_{1},i_{2},\ldots,i_{k}\text{ are}%
\\\text{distinct}\\\text{elements of }V\setminus\left\{  t\right\}  }%
}}\underbrace{\left(  X+Y\right)  ^{n-1-k}x_{t}}_{\substack{=x_{t}\left(
X+Y\right)  ^{n-1-k}\\\text{(since }X+Y\text{ lies in the center of
}\mathbb{L}\text{,}\\\text{and therefore so does }\left(  X+Y\right)
^{n-1-k}\text{)}}}x_{i_{1}}x_{i_{2}}\cdots x_{i_{k}}\\
&  \ \ \ \ \ \ \ \ \ \ \ \ \ \ \ \ \ \ \ \ \left(
\begin{array}
[c]{c}%
\text{here, we have renamed the}\\
\text{summation index }\left(  i_{1},i_{2},\ldots,i_{k+1}\right) \\
\text{as }\left(  t,i_{1},i_{2},\ldots,i_{k}\right)
\end{array}
\right) \\
&  =\left(  X+Y\right)  ^{n}+\sum_{t\in V}\sum_{\substack{i_{1},i_{2}%
,\ldots,i_{k}\text{ are}\\\text{distinct}\\\text{elements of }V\setminus
\left\{  t\right\}  }}x_{t}\left(  X+Y\right)  ^{n-1-k}x_{i_{1}}x_{i_{2}%
}\cdots x_{i_{k}}\\
&  =\left(  X+Y\right)  ^{n}+\sum_{t\in V}x_{t}\sum_{\substack{i_{1}%
,i_{2},\ldots,i_{k}\text{ are}\\\text{distinct}\\\text{elements of }%
V\setminus\left\{  t\right\}  }}\left(  X+Y\right)  ^{n-1-k}x_{i_{1}}x_{i_{2}%
}\cdots x_{i_{k}},
\end{align*}
this yields%
\[
\sum_{S\subseteq V}\left(  X+\sum_{s\in S}x_{s}\right)  ^{\left\vert
S\right\vert }\left(  Y-\sum_{s\in S}x_{s}\right)  ^{n-\left\vert S\right\vert
}=\sum_{\substack{i_{1},i_{2},\ldots,i_{k}\text{ are}\\\text{distinct}%
\\\text{elements of }V}}\left(  X+Y\right)  ^{n-k}x_{i_{1}}x_{i_{2}}\cdots
x_{i_{k}}.
\]
Therefore, Theorem \ref{thm.1} is proven for our $n$. This completes the
induction step. Hence, both Theorem \ref{thm.1} and Theorem \ref{thm.2} are proven.
\end{proof}
\end{vershort}

\begin{verlong}
We shall prove Theorem \ref{thm.1} and Theorem \ref{thm.2} together, by a
simultaneous induction. In order to make the computations more palatable, let
us first introduce a convenient notation:

\begin{definition}
\label{def.x(S)}Assume that $V$ is a finite set. Assume that $x_{s}$ is an
element of $\mathbb{L}$ for each $s\in V$. Assume that $S$ is a subset of $V$.
Then, $x\left(  S\right)  $ shall denote the element $\sum_{s\in S}x_{s}$ of
$\mathbb{L}$.
\end{definition}

We make a trivial observation:

\begin{lemma}
\label{lem.x(S).triv}Let $V$ be a finite set. For each $s\in V$, let $x_{s}$
be an element of $\mathbb{L}$.

\begin{enumerate}
\item[\textbf{(a)}] We have $x\left(  \varnothing\right)  =0$.

\item[\textbf{(b)}] Let $t\in V$. Let $S$ be a subset of $V\setminus\left\{
t\right\}  $. Then, $x\left(  S\cup\left\{  t\right\}  \right)  =x_{t}%
+x\left(  S\right)  $.
\end{enumerate}
\end{lemma}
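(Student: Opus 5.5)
Both parts follow directly from Definition \ref{def.x(S)}, which defines $x\left(  S\right)  =\sum_{s\in S}x_{s}$, together with the elementary properties of finite sums in the additive group of $\mathbb{L}$. No induction or ring structure beyond addition is needed.

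For part \textbf{(a)}, I will unfold the definition: $x\left(  \varnothing\right)  =\sum_{s\in\varnothing}x_{s}$. This is an empty sum, so it equals $0$ by the standard convention.

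For part \textbf{(b)}, I will first note that $t\notin S$, since $S\subseteq V\setminus\left\{  t\right\}  $. Hence $S\cup\left\{  t\right\}  $ is the disjoint union of $S$ and $\left\{  t\right\}  $. A finite sum over a disjoint union splits into the sum of the two partial sums (this uses commutativity and associativity of addition in $\mathbb{L}$, which hold even though multiplication is noncommutative). This gives
\[
x\left(  S\cup\left\{  t\right\}  \right)  =\sum_{s\in S\cup\left\{
t\right\}  }x_{s}=\sum_{s\in\left\{  t\right\}  }x_{s}+\sum_{s\in S}%
x_{s}=x_{t}+x\left(  S\right)  .
\]

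There is no real obstacle here. The only point needing care is the disjointness $t\notin S$: without it, $x_{t}$ would be counted twice, and that is exactly what the hypothesis $S\subseteq V\setminus\left\{  t\right\}  $ rules out.
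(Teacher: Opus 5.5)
Your proof is correct and essentially the same as the paper's. Part (a) is the empty-sum convention. In part (b) you use $t\notin S$, which follows from $S\subseteq V\setminus\left\{ t\right\}$, to split $\sum_{s\in S\cup\left\{ t\right\}}x_{s}$ into $x_{t}+\sum_{s\in S}x_{s}$; the paper phrases the same step as splitting off the addend $s=t$ and using $\left(S\cup\left\{ t\right\}\right)\setminus\left\{ t\right\} =S$, which is only a cosmetic difference.
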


\begin{proof}
[Proof of Lemma \ref{lem.x(S).triv}.]\textbf{(a)} The definition of $x\left(
\varnothing\right)  $ yields $x\left(  \varnothing\right)  =\sum
_{s\in\varnothing}x_{s}=\left(  \text{empty sum}\right)  =0$. This proves
Lemma \ref{lem.x(S).triv} \textbf{(a)}. \medskip

\textbf{(b)} We have $t\notin S$\ \ \ \ \footnote{\textit{Proof.} Assume the
contrary. Thus, $t\in S$. Hence, $t\in S\subseteq V\setminus\left\{
t\right\}  $. In other words, $t\in V$ and $t\notin\left\{  t\right\}  $. But
$t\notin\left\{  t\right\}  $ contradicts $t\in\left\{  t\right\}  $. This
contradiction shows that our assumption was false. Qed.}. Hence, $\left(
S\cup\left\{  t\right\}  \right)  \setminus\left\{  t\right\}  =S$. Also,
$t\in\left\{  t\right\}  \subseteq S\cup\left\{  t\right\}  $.

The definition of $x\left(  S\right)  $ yields $x\left(  S\right)  =\sum_{s\in
S}x_{s}$.

Now, the definition of $x\left(  S\cup\left\{  t\right\}  \right)  $ yields%
\begin{align*}
x\left(  S\cup\left\{  t\right\}  \right)   &  =\sum_{s\in S\cup\left\{
t\right\}  }x_{s}=x_{t}+\underbrace{\sum_{\substack{s\in S\cup\left\{
t\right\}  ;\\s\neq t}}}_{\substack{=\sum_{s\in\left(  S\cup\left\{
t\right\}  \right)  \setminus\left\{  t\right\}  }=\sum_{s\in S}\\\text{(since
}\left(  S\cup\left\{  t\right\}  \right)  \setminus\left\{  t\right\}
=S\text{)}}}x_{s}\\
&  \ \ \ \ \ \ \ \ \ \ \ \ \ \ \ \ \ \ \ \ \left(
\begin{array}
[c]{c}%
\text{here, we have split off the addend for }s=t\\
\text{from the sum (since }t\in S\cup\left\{  t\right\}  \text{)}%
\end{array}
\right) \\
&  =x_{t}+\underbrace{\sum_{s\in S}x_{s}}_{=x\left(  S\right)  }%
=x_{t}+x\left(  S\right)  .
\end{align*}
This proves Lemma \ref{lem.x(S).triv} \textbf{(b)}.
\end{proof}

Now, let us restate Theorem \ref{thm.1} and Theorem \ref{thm.2} together in a
form that will be convenient for us to prove:

\begin{lemma}
\label{lem.12}Let $V$ be a finite set. Let $n=\left\vert V\right\vert $. For
each $s\in V$, let $x_{s}$ be an element of $\mathbb{L}$. Let $X$ and $Y$ be
two elements of $\mathbb{L}$ such that $X+Y$ lies in the center of
$\mathbb{L}$. Then,%
\begin{align}
&  \sum_{S\subseteq V}\left(  X+x\left(  S\right)  \right)  ^{\left\vert
S\right\vert }\left(  Y-x\left(  S\right)  \right)  ^{n-\left\vert
S\right\vert }\nonumber\\
&  =\sum_{\substack{i_{1},i_{2},\ldots,i_{k}\text{ are}\\\text{distinct}%
\\\text{elements of }V}}\left(  X+Y\right)  ^{n-k}x_{i_{1}}x_{i_{2}}\cdots
x_{i_{k}} \label{eq.lem12.1}%
\end{align}
and%
\begin{equation}
Y^{n}+\sum_{\substack{S\subseteq V;\\S\neq\varnothing}}X\left(  X+x\left(
S\right)  \right)  ^{\left\vert S\right\vert -1}\left(  Y-x\left(  S\right)
\right)  ^{n-\left\vert S\right\vert }=\left(  X+Y\right)  ^{n}.
\label{eq.lem12.2}%
\end{equation}
(Here, the sum on the right hand side of (\ref{eq.lem12.1}) has to be
interpreted as in Theorem \ref{thm.1}.)
\end{lemma}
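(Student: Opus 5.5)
We will prove both identities of Lemma \ref{lem.12} together, by strong induction on $n=\left\vert V\right\vert$. The point of writing (\ref{eq.lem12.2}) with $Y^{n}$ split off is that every remaining addend has $S\neq\varnothing$. So no negative powers appear, and each term is an honest product in $\mathbb{L}$. For $n=0$ both sides of (\ref{eq.lem12.1}) equal $1$, and (\ref{eq.lem12.2}) reads $Y^{0}=\left(  X+Y\right)  ^{0}$. For the induction step, fix $n>0$ and $t\in V$, and assume both identities for all sets of size $n-1$ and all admissible $X,Y$.

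\textbf{Step 1.} We apply (\ref{eq.lem12.1}) to $V\setminus\left\{  t\right\}$ twice. First we use $X,Y$ themselves. Then we use $X+x_{t}$ and $Y-x_{t}$, whose sum is still the central element $X+Y$. The two right hand sides coincide. Using Lemma \ref{lem.x(S).triv} \textbf{(b)} and the bijection $S\mapsto S\cup\left\{  t\right\}$, we then get
\[
\sum_{\substack{S\subseteq V;\\t\in S}}\left(  X+x\left(  S\right)  \right)
^{\left\vert S\right\vert -1}\left(  Y-x\left(  S\right)  \right)
^{n-\left\vert S\right\vert }=\sum_{S\subseteq V\setminus\left\{  t\right\}
}\left(  X+x\left(  S\right)  \right)  ^{\left\vert S\right\vert }\left(
Y-x\left(  S\right)  \right)  ^{n-1-\left\vert S\right\vert }.
\]
The right hand side also equals the distinct-tuple sum over $V\setminus\left\{  t\right\}$ with exponents $n-1-k$.

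\textbf{Step 2.} To prove (\ref{eq.lem12.2}), we split the left hand side by whether $t\in S$. For $t\in S$ we left-multiply the identity of Step 1 by $X$. For $t\notin S$ we peel one factor $Y-x\left(  S\right)$ off the second power. Pairing terms with the same $S\subseteq V\setminus\left\{  t\right\}$ gives the factor $\left(  X+x\left(  S\right)  \right)  +\left(  Y-x\left(  S\right)  \right)  =X+Y$. This factor is central, so it can be pulled to the front, and the induction hypothesis (\ref{eq.lem12.2}) for $V\setminus\left\{  t\right\}$ then gives $\left(  X+Y\right)  \left(  X+Y\right)  ^{n-1}$.

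The main obstacle is bookkeeping the $S=\varnothing$ terms, which the split-off $Y^{n}$ is meant to handle. On the $t\notin S$ side, the $S=\varnothing$ term is $Y^{n}=Y\cdot Y^{n-1}$. On the $t\in S$ side, the $S=\left\{  t\right\}$ term corresponds to $X\cdot Y^{n-1}$. Together they give $\left(  X+Y\right)  Y^{n-1}$, which is exactly the split-off term of the hypothesis times $X+Y$. I will treat these terms separately and write the rest with nonempty $S$ only.

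\textbf{Step 3.} To prove (\ref{eq.lem12.1}), we split off $S=\varnothing$ and write $\left(  X+x\left(  S\right)  \right)  ^{\left\vert S\right\vert }=\left(  X+\sum_{t\in S}x_{t}\right)  \left(  X+x\left(  S\right)  \right)  ^{\left\vert S\right\vert -1}$ for $S\neq\varnothing$. The $X$-parts together with $Y^{n}$ give $\left(  X+Y\right)  ^{n}$ by Step 2. The $x_{t}$-parts, after swapping sums to $\sum_{t\in V}x_{t}\sum_{S\ni t}$, become by Step 1
\[
\sum_{t\in V}x_{t}\sum_{\substack{i_{1},\ldots,i_{k}\text{ distinct}\\\text{in }V\setminus\left\{  t\right\}  }}\left(  X+Y\right)  ^{n-1-k}x_{i_{1}}\cdots x_{i_{k}}.
\]
On the other side, the right hand side of (\ref{eq.lem12.1}) splits into its $k=0$ term plus tuples starting with $t$. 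Moving the central power $\left(  X+Y\right)  ^{n-1-k}$ past $x_{t}$ makes this equal to the expression above, which completes the induction.
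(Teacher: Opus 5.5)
Your proposal is correct and follows the paper's proof essentially step for step. It uses the same double application of the induction hypothesis on $V\setminus\{t\}$ (with $X,Y$ and with $X+x_t,\,Y-x_t$) plus the bijection $S\mapsto S\cup\{t\}$; the same pairing of $t\in S$ with $t\notin S$ terms to produce the central factor $X+Y$ for the second identity; and the same splitting $X+x(S)=X+\sum_{t\in S}x_t$ with first-entry decomposition of tuples for the first identity. The differences are cosmetic: you combine $Y^n+XY^{n-1}=(X+Y)Y^{n-1}$ where the paper subtracts these terms separately, and you use strong rather than ordinary induction.
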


Note that the equalities (\ref{eq.lem12.1}) and (\ref{eq.lem12.2}) are
restatements of the claims of Theorem \ref{thm.1} and of Theorem \ref{thm.2},
respectively. Unlike the claim of Theorem \ref{thm.2}, however, the equality
(\ref{eq.lem12.2}) does not require any convention about how to interpret the
term $X\left(  X+\sum_{s\in S}x_{s}\right)  ^{\left\vert S\right\vert -1}$
when $S=\varnothing$, because the sum on the left hand side (\ref{eq.lem12.2})
has no addend for $S=\varnothing$.

\begin{proof}
[Proof of Lemma \ref{lem.12}.]Let us first notice that all terms appearing in
Lemma \ref{lem.12} are well-defined\footnote{\textit{Proof.} This follows from
the following four observations:
\par
\begin{itemize}
\item For every subset $S$ of $V$, the term $\left\vert S\right\vert $ is
well-defined. (\textit{Proof:} Let $S$ be a subset of $V$. Then, $S$ is a
subset of the finite set $V$, and therefore itself is finite. Hence, the term
$\left\vert S\right\vert $ is well-defined. Qed.)
\par
\item For every subset $S$ of $V$, the term $\left(  Y-x\left(  S\right)
\right)  ^{n-\left\vert S\right\vert }$ is well-defined. (\textit{Proof:} Let
$S$ be a subset of $V$. Then, $\left\vert S\right\vert \leq\left\vert
V\right\vert =n$, so that $n-\left\vert S\right\vert \geq0$. In other words,
$n-\left\vert S\right\vert \in\mathbb{N}$. Hence, the term $\left(  Y-x\left(
S\right)  \right)  ^{n-\left\vert S\right\vert }$ is well-defined. Qed.)
\par
\item Whenever $i_{1},i_{2},\ldots,i_{k}$ are distinct elements of $V$, the
term $\left(  X+Y\right)  ^{n-k}$ is well-defined. (\textit{Proof:} Let
$i_{1},i_{2},\ldots,i_{k}$ be distinct elements of $V$. Then, $\left\vert
\left\{  i_{1},i_{2},\ldots,i_{k}\right\}  \right\vert =k$ (since $i_{1}%
,i_{2},\ldots,i_{k}$ are $k$ distinct elements). But $\left\{  i_{1}%
,i_{2},\ldots,i_{k}\right\}  \subseteq V$ (since $i_{1},i_{2},\ldots,i_{k}$
are elements of $V$) and therefore $\left\vert \left\{  i_{1},i_{2}%
,\ldots,i_{k}\right\}  \right\vert \leq\left\vert V\right\vert =n$. Hence,
$n\geq\left\vert \left\{  i_{1},i_{2},\ldots,i_{k}\right\}  \right\vert =k$,
so that $n-k\geq0$. In other words, $n-k\in\mathbb{N}$. Thus, the term
$\left(  X+Y\right)  ^{n-k}$ is well-defined. Qed.)
\par
\item For every subset $S$ of $V$ satisfying $S\neq\varnothing$, the term
$\left(  X+x\left(  S\right)  \right)  ^{\left\vert S\right\vert -1}$ is
well-defined. (\textit{Proof:} Let $S$ be a subset of $V$ satisfying
$S\neq\varnothing$. Then, $\left\vert S\right\vert >0$ (since $S\neq
\varnothing$) and thus $\left\vert S\right\vert \geq1$ (since $\left\vert
S\right\vert $ is an integer). In other words, $\left\vert S\right\vert
-1\in\mathbb{N}$. Hence, the term $\left(  X+x\left(  S\right)  \right)
^{\left\vert S\right\vert -1}$ is well-defined. Qed.)
\end{itemize}
}.

We shall prove Lemma \ref{lem.12} by induction over $n$.

Lemma \ref{lem.12} holds when $n=0$\ \ \ \ \footnote{\textit{Proof.} Assume
that $n=0$. We must prove that Lemma \ref{lem.12} holds.
\par
We have $\left\vert V\right\vert =n=0$. Hence, $V=\varnothing$. Thus, the only
subset of $V$ is the empty set $\varnothing$. Hence, the sum $\sum_{S\subseteq
V}\left(  X+x\left(  S\right)  \right)  ^{\left\vert S\right\vert }\left(
Y-x\left(  S\right)  \right)  ^{n-\left\vert S\right\vert }$ has only one
addend, namely the addend for $S=\varnothing$. Therefore, this sum simplifies
as follows:%
\begin{align}
&  \sum_{S\subseteq V}\left(  X+x\left(  S\right)  \right)  ^{\left\vert
S\right\vert }\left(  Y-x\left(  S\right)  \right)  ^{n-\left\vert
S\right\vert }\nonumber\\
&  =\left(  X+x\left(  \varnothing\right)  \right)  ^{\left\vert
\varnothing\right\vert }\left(  Y-x\left(  \varnothing\right)  \right)
^{n-\left\vert \varnothing\right\vert }=\underbrace{\left(  X+x\left(
\varnothing\right)  \right)  ^{0}}_{=1}\underbrace{\left(  Y-x\left(
\varnothing\right)  \right)  ^{0-0}}_{=\left(  Y-x\left(  \varnothing\right)
\right)  ^{0}=1}\nonumber\\
&  \ \ \ \ \ \ \ \ \ \ \left(  \text{since }\left\vert \varnothing\right\vert
=0\text{ and }n=0\right) \nonumber\\
&  =1. \label{pf.lem.12.indbase.fn1.1}%
\end{align}
\par
On the other hand, the set $V$ has no elements (since $V=\varnothing$). Thus,
the only $k$-tuple $\left(  i_{1},i_{2},\ldots,i_{k}\right)  $ of distinct
elements of $V$ is the empty $0$-tuple $\left(  {}\right)  $. Therefore, the
sum $\sum_{\substack{i_{1},i_{2},\ldots,i_{k}\text{ are}\\\text{distinct}%
\\\text{elements of }V}}\left(  X+Y\right)  ^{n-k}x_{i_{1}}x_{i_{2}}\cdots
x_{i_{k}}$ has only one addend, namely the addend for $k=0$ and $\left(
i_{1},i_{2},\ldots,i_{k}\right)  =\left(  {}\right)  $. Thus, this sum
simplifies as follows:%
\begin{align*}
&  \sum_{\substack{i_{1},i_{2},\ldots,i_{k}\text{ are}\\\text{distinct}%
\\\text{elements of }V}}\left(  X+Y\right)  ^{n-k}x_{i_{1}}x_{i_{2}}\cdots
x_{i_{k}}\\
&  =\left(  X+Y\right)  ^{n-0}\underbrace{\left(  \text{empty product}\right)
}_{=1}=\left(  X+Y\right)  ^{n-0}=\left(  X+Y\right)  ^{0-0}%
\ \ \ \ \ \ \ \ \ \ \left(  \text{since }n=0\right) \\
&  =\left(  X+Y\right)  ^{0}=1.
\end{align*}
Comparing this with (\ref{pf.lem.12.indbase.fn1.1}), we obtain
\begin{equation}
\sum_{S\subseteq V}\left(  X+x\left(  S\right)  \right)  ^{\left\vert
S\right\vert }\left(  Y-x\left(  S\right)  \right)  ^{n-\left\vert
S\right\vert }=\sum_{\substack{i_{1},i_{2},\ldots,i_{k}\text{ are}%
\\\text{distinct}\\\text{elements of }V}}\left(  X+Y\right)  ^{n-k}x_{i_{1}%
}x_{i_{2}}\cdots x_{i_{k}}.\nonumber
\end{equation}
In other words, (\ref{eq.lem12.1}) holds.
\par
Recall that the only subset of $V$ is the empty set $\varnothing$. Hence,
there exist no subset of $V$ distinct from $\varnothing$. In other words,
there exists no subset $S$ of $V$ satisfying $S\neq\varnothing$. Thus, the sum
$\sum_{\substack{S\subseteq V;\\S\neq\varnothing}}X\left(  X+x\left(
S\right)  \right)  ^{\left\vert S\right\vert -1}\left(  Y-x\left(  S\right)
\right)  ^{n-\left\vert S\right\vert }$ is empty. Hence,%
\[
\sum_{\substack{S\subseteq V;\\S\neq\varnothing}}X\left(  X+x\left(  S\right)
\right)  ^{\left\vert S\right\vert -1}\left(  Y-x\left(  S\right)  \right)
^{n-\left\vert S\right\vert }=\left(  \text{empty sum}\right)  =0.
\]
Thus,%
\begin{align*}
Y^{n}+\underbrace{\sum_{\substack{S\subseteq V;\\S\neq\varnothing}}X\left(
X+x\left(  S\right)  \right)  ^{\left\vert S\right\vert -1}\left(  Y-x\left(
S\right)  \right)  ^{n-\left\vert S\right\vert }}_{=0}  &  =Y^{n}%
=Y^{0}\ \ \ \ \ \ \ \ \ \ \left(  \text{since }n=0\right) \\
&  =1.
\end{align*}
Comparing this with%
\begin{align*}
\left(  X+Y\right)  ^{n}  &  =\left(  X+Y\right)  ^{0}%
\ \ \ \ \ \ \ \ \ \ \left(  \text{since }n=0\right) \\
&  =1,
\end{align*}
we obtain%
\[
Y^{n}+\sum_{\substack{S\subseteq V;\\S\neq\varnothing}}X\left(  X+x\left(
S\right)  \right)  ^{\left\vert S\right\vert -1}\left(  Y-x\left(  S\right)
\right)  ^{n-\left\vert S\right\vert }=\left(  X+Y\right)  ^{n}.
\]
In other words, (\ref{eq.lem12.2}) holds.
\par
Thus, we have shown that both (\ref{eq.lem12.1}) and (\ref{eq.lem12.2}) hold.
In other words, Lemma \ref{lem.12} holds. Qed.}. This completes the induction base.

\textit{Induction step:} Let $N$ be a positive integer. Assume (as the
induction hypothesis) that Lemma \ref{lem.12} holds when $n=N-1$. We must then
prove that Lemma \ref{lem.12} holds when $n=N$.

Let $V$, $n$, $x_{s}$, $X$ and $Y$ be as in Lemma \ref{lem.12}. Assume that
$n=N$. We are going to prove the equalities (\ref{eq.lem12.1}) and
(\ref{eq.lem12.2}).

We note that every subset $S$ of $V$ satisfying $S\neq\varnothing$ must
satisfy
\begin{equation}
\left\vert S\right\vert -1\geq0 \label{pf.lem.12.step.S-1>=0}%
\end{equation}
\footnote{\textit{Proof of (\ref{pf.lem.12.step.S-1>=0}):} Let $S$ be a subset
of $V$ satisfying $S\neq\varnothing$. Then, $\left\vert S\right\vert >0$
(since $S\neq\varnothing$). Hence, $\left\vert S\right\vert \geq1$ (since
$\left\vert S\right\vert $ is an integer), so that $\left\vert S\right\vert
-1\geq0$, qed.}. Hence, $\left(  X+x\left(  S\right)  \right)  ^{\left\vert
S\right\vert -1}$ is a well-defined element of $\mathbb{L}$ for every such $S$.

Let $t\in V$ be arbitrary. The set $V\setminus\left\{  t\right\}  $ is a
subset of the set $V$, and thus is finite (since the set $V$ is finite). From
$t\in V$, we obtain $\left\vert V\setminus\left\{  t\right\}  \right\vert
=\underbrace{\left\vert V\right\vert }_{=n=N}-\,1=N-1$. In other words,
$N-1=\left\vert V\setminus\left\{  t\right\}  \right\vert $. Also, recall that
$x_{s}$ is an element of $\mathbb{L}$ for each $s\in V$. Hence, $x_{s}$ is an
element of $\mathbb{L}$ for each $s\in V\setminus\left\{  t\right\}  $ (since
each $s\in V\setminus\left\{  t\right\}  $ is an element of $V$). Finally, of
course, we have $N-1=N-1$.

Thus, we can apply Lemma \ref{lem.12} to $V\setminus\left\{  t\right\}  $ and
$N-1$ instead of $V$ and $n$ (since we have assumed that Lemma \ref{lem.12}
holds when $n=N-1$). As the result, we conclude that%
\begin{align}
&  \sum_{S\subseteq V\setminus\left\{  t\right\}  }\left(  X+x\left(
S\right)  \right)  ^{\left\vert S\right\vert }\left(  Y-x\left(  S\right)
\right)  ^{N-1-\left\vert S\right\vert }\nonumber\\
&  =\sum_{\substack{i_{1},i_{2},\ldots,i_{k}\text{ are}\\\text{distinct}%
\\\text{elements of }V\setminus\left\{  t\right\}  }}\left(  X+Y\right)
^{N-1-k}x_{i_{1}}x_{i_{2}}\cdots x_{i_{k}} \label{pf.lem.12.step.ih1}%
\end{align}
and%
\begin{equation}
Y^{N-1}+\sum_{\substack{S\subseteq V\setminus\left\{  t\right\}
;\\S\neq\varnothing}}X\left(  X+x\left(  S\right)  \right)  ^{\left\vert
S\right\vert -1}\left(  Y-x\left(  S\right)  \right)  ^{N-1-\left\vert
S\right\vert }=\left(  X+Y\right)  ^{N-1}. \label{pf.lem.12.step.ih2}%
\end{equation}

Subtracting $Y^{N-1}$ from both sides of (\ref{pf.lem.12.step.ih2}), we obtain%
\begin{equation}
\sum_{\substack{S\subseteq V\setminus\left\{  t\right\}  ;\\S\neq\varnothing
}}X\left(  X+x\left(  S\right)  \right)  ^{\left\vert S\right\vert -1}\left(
Y-x\left(  S\right)  \right)  ^{N-1-\left\vert S\right\vert }=\left(
X+Y\right)  ^{N-1}-Y^{N-1}. \label{pf.lem.12.step.ih2b}%
\end{equation}

But $x_{t}$ is an element of $\mathbb{L}$ (since $x_{s}$ is an element of
$\mathbb{L}$ for each $s\in V$). Thus, $X+x_{t}$ and $Y-x_{t}$ are two
elements of $\mathbb{L}$. Their sum $\left(  X+x_{t}\right)  +\left(
Y-x_{t}\right)  $ lies in the center of $\mathbb{L}$ (because this sum equals
$\left(  X+x_{t}\right)  +\left(  Y-x_{t}\right)  =X+Y$, but we know that
$X+Y$ lies in the center of $\mathbb{L}$). Hence, we can apply Lemma
\ref{lem.12} to $V\setminus\left\{  t\right\}  $, $N-1$, $X+x_{t}$ and
$Y-x_{t}$ instead of $V$, $n$, $X$ and $Y$ (since we have assumed that Lemma
\ref{lem.12} holds when $n=N-1$). As a result, we conclude that%
\begin{align}
&  \sum_{S\subseteq V\setminus\left\{  t\right\}  }\left(  X+x_{t}+x\left(
S\right)  \right)  ^{\left\vert S\right\vert }\left(  Y-x_{t}-x\left(
S\right)  \right)  ^{N-1-\left\vert S\right\vert }\nonumber\\
&  =\sum_{\substack{i_{1},i_{2},\ldots,i_{k}\text{ are}\\\text{distinct}%
\\\text{elements of }V\setminus\left\{  t\right\}  }}\left(  X+x_{t}%
+Y-x_{t}\right)  ^{N-1-k}x_{i_{1}}x_{i_{2}}\cdots x_{i_{k}}
\label{pf.lem.12.step.ih1'}%
\end{align}
and%
\begin{align*}
&  \left(  Y-x_{t}\right)  ^{N-1} +\sum_{\substack{S\subseteq V\setminus
\left\{  t\right\}  ;\\S\neq\varnothing}}\left(  X+x_{t}\right)  \left(
X+x_{t}+x\left(  S\right)  \right)  ^{\left\vert S\right\vert -1}\left(
Y-x_{t}-x\left(  S\right)  \right)  ^{N-1-\left\vert S\right\vert }\\
&  =\left(  X+x_{t}+Y-x_{t}\right)  ^{N-1}.
\end{align*}

The equality (\ref{pf.lem.12.step.ih1'}) becomes%
\begin{align}
&  \sum_{S\subseteq V\setminus\left\{  t\right\}  }\left(  X+x_{t}+x\left(
S\right)  \right)  ^{\left\vert S\right\vert }\left(  Y-x_{t}-x\left(
S\right)  \right)  ^{N-1-\left\vert S\right\vert }\nonumber\\
&  =\sum_{\substack{i_{1},i_{2},\ldots,i_{k}\text{ are}\\\text{distinct}%
\\\text{elements of }V\setminus\left\{  t\right\}  }}\left(
\underbrace{X+x_{t}+Y-x_{t}}_{=X+Y}\right)  ^{N-1-k}x_{i_{1}}x_{i_{2}}\cdots
x_{i_{k}}\nonumber\\
&  =\sum_{\substack{i_{1},i_{2},\ldots,i_{k}\text{ are}\\\text{distinct}%
\\\text{elements of }V\setminus\left\{  t\right\}  }}\left(  X+Y\right)
^{N-1-k}x_{i_{1}}x_{i_{2}}\cdots x_{i_{k}}\label{pf.lem.12.step.ih1=1'o}\\
&  =\sum_{S\subseteq V\setminus\left\{  t\right\}  }\left(  X+x\left(
S\right)  \right)  ^{\left\vert S\right\vert }\left(  Y-x\left(  S\right)
\right)  ^{N-1-\left\vert S\right\vert }\ \ \ \ \ \ \ \ \ \ \left(  \text{by
(\ref{pf.lem.12.step.ih1})}\right) \label{pf.lem.12.step.ih1=1'}\\
&  =\underbrace{\left(  X+x\left(  \varnothing\right)  \right)  ^{\left\vert
\varnothing\right\vert }}_{\substack{=\left(  X+x\left(  \varnothing\right)
\right)  ^{0}\\\text{(since }\left\vert \varnothing\right\vert =0\text{)}%
}}\underbrace{\left(  Y-x\left(  \varnothing\right)  \right)  ^{N-1-\left\vert
\varnothing\right\vert }}_{\substack{=\left(  Y-0\right)  ^{N-1-0}%
\\\text{(since }x\left(  \varnothing\right)  =0\text{ (by Lemma
\ref{lem.x(S).triv} \textbf{(a)})}\\\text{and }\left\vert \varnothing
\right\vert =0\text{)}}}\nonumber\\
&  \ \ \ \ \ \ \ \ \ \ +\sum_{\substack{S\subseteq V\setminus\left\{
t\right\}  ;\\S\neq\varnothing}}\left(  X+x\left(  S\right)  \right)
^{\left\vert S\right\vert }\underbrace{\left(  Y-x\left(  S\right)  \right)
^{N-1-\left\vert S\right\vert }}_{\substack{=\left(  Y-x\left(  S\right)
\right)  ^{N-\left\vert S\right\vert -1}\\\text{(since }N-1-\left\vert
S\right\vert =N-\left\vert S\right\vert -1\text{)}}}\nonumber\\
&  \ \ \ \ \ \ \ \ \ \ \ \ \ \ \ \ \ \ \ \ \left(
\begin{array}
[c]{c}%
\text{here, we have split off the addend for }S=\varnothing\text{ from the
sum}\\
\text{(since }\varnothing\text{ is a subset of }V\setminus\left\{  t\right\}
\text{)}%
\end{array}
\right) \nonumber\\
&  =\underbrace{\left(  X+x\left(  \varnothing\right)  \right)  ^{0}}%
_{=1}\underbrace{\left(  Y-0\right)  ^{N-1-0}}_{\substack{=Y^{N-1}%
\\\text{(since }Y-0=Y\\\text{and }N-1-0=N-1\text{)}}}+\sum
_{\substack{S\subseteq V\setminus\left\{  t\right\}  ;\\S\neq\varnothing
}}\left(  X+x\left(  S\right)  \right)  ^{\left\vert S\right\vert }\left(
Y-x\left(  S\right)  \right)  ^{N-\left\vert S\right\vert -1}\nonumber\\
&  =Y^{N-1}+\sum_{\substack{S\subseteq V\setminus\left\{  t\right\}
;\\S\neq\varnothing}}\left(  X+x\left(  S\right)  \right)  ^{\left\vert
S\right\vert }\left(  Y-x\left(  S\right)  \right)  ^{N-\left\vert
S\right\vert -1}. \label{pf.lem.12.step.ih1=1'b}%
\end{align}

Let us record the following (well-known and simple) fact: The map%
\begin{align}
\left\{  S\subseteq V\ \mid\ t\notin S\right\}   &  \rightarrow\left\{
S\subseteq V\ \mid\ t\in S\right\}  ,\nonumber\\
S  &  \mapsto S\cup\left\{  t\right\}  \label{pf.lem.12.step.bij}%
\end{align}
is a bijection\footnote{Its inverse is the map
\begin{align*}
\left\{  S\subseteq V\ \mid\ t\in S\right\}   &  \rightarrow\left\{
S\subseteq V\ \mid\ t\notin S\right\}  ,\\
T  &  \mapsto T\setminus\left\{  t\right\}  .
\end{align*}
}.

Any subset $S$ of $V$ satisfying $t\notin S$ must satisfy%
\begin{equation}
N-\left\vert S\right\vert \geq1 \label{pf.lem.12.step.pos1}%
\end{equation}
\footnote{\textit{Proof of (\ref{pf.lem.12.step.pos1}):} Let $S$ be a subset
of $V$ satisfying $t\notin S$. We have $S\subseteq V\setminus\left\{
t\right\}  $ (since $S\subseteq V$ and $t\notin S$), and thus $\left\vert
S\right\vert \leq\left\vert V\setminus\left\{  t\right\}  \right\vert =N-1$.
In other words, $\left(  N-1\right)  -\left\vert S\right\vert \geq0$. Thus,
$N-\left\vert S\right\vert -1=\left(  N-1\right)  -\left\vert S\right\vert
\geq0$, so that $N-\left\vert S\right\vert \geq1$. This proves
(\ref{pf.lem.12.step.pos1}).}.

Any subset $S$ of $V$ satisfying $t\in S$ must automatically satisfy
$S\neq\varnothing$ (because $S$ has at least one element (namely, $t$)). Thus,
for a subset $S$ of $V$, the condition $\left(  t\in S\text{ and }%
S\neq\varnothing\right)  $ is equivalent to the condition $\left(  t\in
S\right)  $. Hence, we have the following equality of summation signs:%
\begin{equation}
\sum_{\substack{S\subseteq V;\\t\in S;\\S\neq\varnothing}}=\sum
_{\substack{S\subseteq V;\\t\in S}}. \label{pf.lem.12.step.sumeq1}%
\end{equation}
Now,%
\begin{align}
&  \underbrace{\sum_{\substack{S\subseteq V;\\S\neq\varnothing;\\t\in S}%
}}_{\substack{_{\substack{=\sum_{\substack{S\subseteq V;\\t\in S;\\S\neq
\varnothing}}=\sum_{\substack{S\subseteq V;\\t\in S}}}}\\\text{(by
(\ref{pf.lem.12.step.sumeq1}))}}}\left(  X+x\left(  S\right)  \right)
^{\left\vert S\right\vert -1}\left(  Y-x\left(  S\right)  \right)
^{N-\left\vert S\right\vert }\nonumber\\
&  =\sum_{\substack{S\subseteq V;\\t\in S}}\left(  X+x\left(  S\right)
\right)  ^{\left\vert S\right\vert -1}\left(  Y-x\left(  S\right)  \right)
^{N-\left\vert S\right\vert }\nonumber\\
&  =\underbrace{\sum_{\substack{S\subseteq V;\\t\notin S}}}_{=\sum_{S\subseteq
V\setminus\left\{  t\right\}  }}\underbrace{\left(  X+x\left(  S\cup\left\{
t\right\}  \right)  \right)  ^{\left\vert S\cup\left\{  t\right\}  \right\vert
-1}\left(  Y-x\left(  S\cup\left\{  t\right\}  \right)  \right)
^{N-\left\vert S\cup\left\{  t\right\}  \right\vert }}_{\substack{=\left(
X+x\left(  S\cup\left\{  t\right\}  \right)  \right)  ^{\left(  \left\vert
S\right\vert +1\right)  -1}\left(  Y-x\left(  S\cup\left\{  t\right\}
\right)  \right)  ^{N-\left(  \left\vert S\right\vert +1\right)
}\\\text{(since }\left\vert S\cup\left\{  t\right\}  \right\vert =\left\vert
S\right\vert +1\text{ (since }t\notin S\text{))}}}\nonumber\\
&  \ \ \ \ \ \ \ \ \ \ \ \ \ \ \ \ \ \ \ \ \left(
\begin{array}
[c]{c}%
\text{here, we have substituted }S\cup\left\{  t\right\}  \text{ for }S\text{
in the sum,}\\
\text{since the map (\ref{pf.lem.12.step.bij}) is a bijection}%
\end{array}
\right) \nonumber\\
&  =\sum_{S\subseteq V\setminus\left\{  t\right\}  }\left(
X+\underbrace{x\left(  S\cup\left\{  t\right\}  \right)  }_{\substack{=x_{t}%
+x\left(  S\right)  \\\text{(by Lemma \ref{lem.x(S).triv} \textbf{(b)})}%
}}\right)  ^{\left(  \left\vert S\right\vert +1\right)  -1}\left(
Y-\underbrace{x\left(  S\cup\left\{  t\right\}  \right)  }_{\substack{=x_{t}%
+x\left(  S\right)  \\\text{(by Lemma \ref{lem.x(S).triv} \textbf{(b)})}%
}}\right)  ^{N-\left(  \left\vert S\right\vert +1\right)  }\nonumber\\
&  =\sum_{S\subseteq V\setminus\left\{  t\right\}  }\underbrace{\left(
X+x_{t}+x\left(  S\right)  \right)  ^{\left(  \left\vert S\right\vert
+1\right)  -1}}_{\substack{=\left(  X+x_{t}+x\left(  S\right)  \right)
^{\left\vert S\right\vert }\\\text{(since }\left(  \left\vert S\right\vert
+1\right)  -1=\left\vert S\right\vert \text{)}}}\underbrace{\left(  Y-\left(
x_{t}+x\left(  S\right)  \right)  \right)  ^{N-\left(  \left\vert S\right\vert
+1\right)  }}_{\substack{=\left(  Y-x_{t}-x\left(  S\right)  \right)
^{N-1-\left\vert S\right\vert }\\\text{(since }Y-\left(  x_{t}+x\left(
S\right)  \right)  =Y-x_{t}-x\left(  S\right)  \\\text{and }N-\left(
\left\vert S\right\vert +1\right)  =N-1-\left\vert S\right\vert \text{)}%
}}\nonumber\\
&  =\sum_{S\subseteq V\setminus\left\{  t\right\}  }\left(  X+x_{t}+x\left(
S\right)  \right)  ^{\left\vert S\right\vert }\left(  Y-x_{t}-x\left(
S\right)  \right)  ^{N-1-\left\vert S\right\vert } \label{pf.lem.12.step.sum0}%
\\
&  =Y^{N-1}+\sum_{\substack{S\subseteq V\setminus\left\{  t\right\}
;\\S\neq\varnothing}}\left(  X+x\left(  S\right)  \right)  ^{\left\vert
S\right\vert }\left(  Y-x\left(  S\right)  \right)  ^{N-\left\vert
S\right\vert -1}\ \ \ \ \ \ \ \ \ \ \left(  \text{by
(\ref{pf.lem.12.step.ih1=1'b})}\right)  .\nonumber
\end{align}
Multiplying both sides of this equality by $X$, we obtain%
\begin{align}
&  X\sum_{\substack{S\subseteq V;\\S\neq\varnothing;\\t\in S}}\left(
X+x\left(  S\right)  \right)  ^{\left\vert S\right\vert -1}\left(  Y-x\left(
S\right)  \right)  ^{N-\left\vert S\right\vert }\nonumber\\
&  =X\left(  Y^{N-1}+\sum_{\substack{S\subseteq V\setminus\left\{  t\right\}
;\\S\neq\varnothing}}\left(  X+x\left(  S\right)  \right)  ^{\left\vert
S\right\vert }\left(  Y-x\left(  S\right)  \right)  ^{N-\left\vert
S\right\vert -1}\right) \nonumber\\
&  =XY^{N-1}+\underbrace{X\sum_{\substack{S\subseteq V\setminus\left\{
t\right\}  ;\\S\neq\varnothing}}\left(  X+x\left(  S\right)  \right)
^{\left\vert S\right\vert }\left(  Y-x\left(  S\right)  \right)
^{N-\left\vert S\right\vert -1}}_{=\sum_{\substack{S\subseteq V\setminus
\left\{  t\right\}  ;\\S\neq\varnothing}}X\left(  X+x\left(  S\right)
\right)  ^{\left\vert S\right\vert }\left(  Y-x\left(  S\right)  \right)
^{N-\left\vert S\right\vert -1}}\nonumber\\
&  =XY^{N-1}+\sum_{\substack{S\subseteq V\setminus\left\{  t\right\}
;\\S\neq\varnothing}}X\underbrace{\left(  X+x\left(  S\right)  \right)
^{\left\vert S\right\vert }}_{=\left(  X+x\left(  S\right)  \right)
^{\left\vert S\right\vert -1}\left(  X+x\left(  S\right)  \right)  }\left(
Y-x\left(  S\right)  \right)  ^{N-\left\vert S\right\vert -1}\nonumber\\
&  =XY^{N-1}+\sum_{\substack{S\subseteq V\setminus\left\{  t\right\}
;\\S\neq\varnothing}}X\left(  X+x\left(  S\right)  \right)  ^{\left\vert
S\right\vert -1}\left(  X+x\left(  S\right)  \right)  \left(  Y-x\left(
S\right)  \right)  ^{N-\left\vert S\right\vert -1}.
\label{pf.lem.12.step.sum1}%
\end{align}

Each subset $S$ of $V$ satisfies either $t\in S$ or $t\notin S$ (but not
both). Hence,
\begin{align*}
&  \sum_{\substack{S\subseteq V;\\S\neq\varnothing}}X\left(  X+x\left(
S\right)  \right)  ^{\left\vert S\right\vert -1}\left(  Y-x\left(  S\right)
\right)  ^{N-\left\vert S\right\vert }\\
&  =\underbrace{\sum_{\substack{S\subseteq V;\\S\neq\varnothing;\\t\in
S}}X\left(  X+x\left(  S\right)  \right)  ^{\left\vert S\right\vert -1}\left(
Y-x\left(  S\right)  \right)  ^{N-\left\vert S\right\vert }}_{\substack{=X\sum
_{\substack{S\subseteq V;\\S\neq\varnothing;\\t\in S}}\left(  X+x\left(
S\right)  \right)  ^{\left\vert S\right\vert -1}\left(  Y-x\left(  S\right)
\right)  ^{N-\left\vert S\right\vert }\\=XY^{N-1}+\sum_{\substack{S\subseteq
V\setminus\left\{  t\right\}  ;\\S\neq\varnothing}}X\left(  X+x\left(
S\right)  \right)  ^{\left\vert S\right\vert -1}\left(  X+x\left(  S\right)
\right)  \left(  Y-x\left(  S\right)  \right)  ^{N-\left\vert S\right\vert
-1}\\\text{(by (\ref{pf.lem.12.step.sum1}))}}}\\
&  \ \ \ \ \ \ \ \ \ \ +\underbrace{\sum_{\substack{S\subseteq V;\\S\neq
\varnothing\\t\notin S}}}_{\substack{=\sum_{\substack{S\subseteq V;\\t\notin
S;\\S\neq\varnothing}}=\sum_{\substack{S\subseteq V\setminus\left\{
t\right\}  ;\\S\neq\varnothing}}\\\text{(since the subsets }S\text{ of
}V\\\text{satisfying }t\notin S\text{ are precisely}\\\text{the subsets of
}V\setminus\left\{  t\right\}  \text{)}}}X\left(  X+x\left(  S\right)
\right)  ^{\left\vert S\right\vert -1}\underbrace{\left(  Y-x\left(  S\right)
\right)  ^{N-\left\vert S\right\vert }}_{\substack{=\left(  Y-x\left(
S\right)  \right)  \left(  Y-x\left(  S\right)  \right)  ^{N-\left\vert
S\right\vert -1}\\\text{(since }N-\left\vert S\right\vert \geq1\\\text{(by
(\ref{pf.lem.12.step.pos1})))}}}\\
&  =XY^{N-1}+\sum_{\substack{S\subseteq V\setminus\left\{  t\right\}
;\\S\neq\varnothing}}X\left(  X+x\left(  S\right)  \right)  ^{\left\vert
S\right\vert -1}\left(  X+x\left(  S\right)  \right)  \left(  Y-x\left(
S\right)  \right)  ^{N-\left\vert S\right\vert -1}\\
&  \ \ \ \ \ \ \ \ \ \ +\sum_{\substack{S\subseteq V\setminus\left\{
t\right\}  ;\\S\neq\varnothing}}X\left(  X+x\left(  S\right)  \right)
^{\left\vert S\right\vert -1}\left(  Y-x\left(  S\right)  \right)  \left(
Y-x\left(  S\right)  \right)  ^{N-\left\vert S\right\vert -1}.
\end{align*}
Subtracting $XY^{N-1}$ from both sides of this equality, we obtain%
\begin{align*}
&  \sum_{\substack{S\subseteq V;\\S\neq\varnothing}}X\left(  X+x\left(
S\right)  \right)  ^{\left\vert S\right\vert -1}\left(  Y-x\left(  S\right)
\right)  ^{N-\left\vert S\right\vert }-XY^{N-1}\\
&  =\sum_{\substack{S\subseteq V\setminus\left\{  t\right\}  ;\\S\neq
\varnothing}}X\left(  X+x\left(  S\right)  \right)  ^{\left\vert S\right\vert
-1}\left(  X+x\left(  S\right)  \right)  \left(  Y-x\left(  S\right)  \right)
^{N-\left\vert S\right\vert -1}\\
&  \ \ \ \ \ \ \ \ \ \ +\sum_{\substack{S\subseteq V\setminus\left\{
t\right\}  ;\\S\neq\varnothing}}X\left(  X+x\left(  S\right)  \right)
^{\left\vert S\right\vert -1}\left(  Y-x\left(  S\right)  \right)  \left(
Y-x\left(  S\right)  \right)  ^{N-\left\vert S\right\vert -1}\\
&  =\sum_{\substack{S\subseteq V\setminus\left\{  t\right\}  ;\\S\neq
\varnothing}}\left(  X\left(  X+x\left(  S\right)  \right)  ^{\left\vert
S\right\vert -1}\left(  X+x\left(  S\right)  \right)  \left(  Y-x\left(
S\right)  \right)  ^{N-\left\vert S\right\vert -1}\right. \\
&  \ \ \ \ \ \ \ \ \ \ \ \ \ \ \ \ \ \ \ \ \left.  +X\left(  X+x\left(
S\right)  \right)  ^{\left\vert S\right\vert -1}\left(  Y-x\left(  S\right)
\right)  \left(  Y-x\left(  S\right)  \right)  ^{N-\left\vert S\right\vert
-1}\right) \\
&  =\sum_{\substack{S\subseteq V\setminus\left\{  t\right\}  ;\\S\neq
\varnothing}}X\left(  X+x\left(  S\right)  \right)  ^{\left\vert S\right\vert
-1}\underbrace{\left(  \left(  X+x\left(  S\right)  \right)  +\left(
Y-x\left(  S\right)  \right)  \right)  }_{=X+Y}\left(  Y-x\left(  S\right)
\right)  ^{N-\left\vert S\right\vert -1}\\
&  \ \ \ \ \ \ \ \ \ \ \ \ \ \ \ \ \ \ \ \ \left(
\begin{array}
[c]{c}%
\text{since each subset }S\text{ of }V\setminus\left\{  t\right\}  \text{
satisfying }S\neq\varnothing\text{ satisfies}\\
X\left(  X+x\left(  S\right)  \right)  ^{\left\vert S\right\vert -1}\left(
X+x\left(  S\right)  \right)  \left(  Y-x\left(  S\right)  \right)
^{N-\left\vert S\right\vert -1}\\
+X\left(  X+x\left(  S\right)  \right)  ^{\left\vert S\right\vert -1}\left(
Y-x\left(  S\right)  \right)  \left(  Y-x\left(  S\right)  \right)
^{N-\left\vert S\right\vert -1}\\
=X\left(  X+x\left(  S\right)  \right)  ^{\left\vert S\right\vert -1}\left(
\left(  X+x\left(  S\right)  \right)  +\left(  Y-x\left(  S\right)  \right)
\right)  \left(  Y-x\left(  S\right)  \right)  ^{N-\left\vert S\right\vert -1}%
\end{array}
\right) \\
&  =\sum_{\substack{S\subseteq V\setminus\left\{  t\right\}  ;\\S\neq
\varnothing}}\underbrace{X\left(  X+x\left(  S\right)  \right)  ^{\left\vert
S\right\vert -1}\left(  X+Y\right)  }_{\substack{=\left(  X+Y\right)  X\left(
X+x\left(  S\right)  \right)  ^{\left\vert S\right\vert -1}\\\text{(since
}X+Y\text{ commutes with }X\left(  X+x\left(  S\right)  \right)  ^{\left\vert
S\right\vert -1}\\\text{(since }X+Y\text{ lies in the center of }%
\mathbb{L}\text{))}}}\ \ \underbrace{\left(  Y-x\left(  S\right)  \right)
^{N-\left\vert S\right\vert -1}}_{\substack{=\left(  Y-x\left(  S\right)
\right)  ^{N-1-\left\vert S\right\vert }\\\text{(since }N-\left\vert
S\right\vert -1=N-1-\left\vert S\right\vert \text{)}}}\\
&  =\sum_{\substack{S\subseteq V\setminus\left\{  t\right\}  ;\\S\neq
\varnothing}}\left(  X+Y\right)  X\left(  X+x\left(  S\right)  \right)
^{\left\vert S\right\vert -1}\left(  Y-x\left(  S\right)  \right)
^{N-1-\left\vert S\right\vert }\\
&  =\left(  X+Y\right)  \underbrace{\sum_{\substack{S\subseteq V\setminus
\left\{  t\right\}  ;\\S\neq\varnothing}}X\left(  X+x\left(  S\right)
\right)  ^{\left\vert S\right\vert -1}\left(  Y-x\left(  S\right)  \right)
^{N-1-\left\vert S\right\vert }}_{\substack{=\left(  X+Y\right)
^{N-1}-Y^{N-1}\\\text{(by (\ref{pf.lem.12.step.ih2b}))}}}\\
&  =\left(  X+Y\right)  \left(  \left(  X+Y\right)  ^{N-1}-Y^{N-1}\right) \\
&  =\underbrace{\left(  X+Y\right)  \left(  X+Y\right)  ^{N-1}}_{=\left(
X+Y\right)  ^{N}}-\underbrace{\left(  X+Y\right)  Y^{N-1}}_{=XY^{N-1}%
+YY^{N-1}}\\
&  =\left(  X+Y\right)  ^{N}-\left(  XY^{N-1}+YY^{N-1}\right)  =\left(
X+Y\right)  ^{N}-XY^{N-1}-YY^{N-1}.
\end{align*}
Adding $XY^{N-1}$ to both sides of this equality, we obtain%
\begin{align}
&  \sum_{\substack{S\subseteq V;\\S\neq\varnothing}}X\left(  X+x\left(
S\right)  \right)  ^{\left\vert S\right\vert -1}\left(  Y-x\left(  S\right)
\right)  ^{N-\left\vert S\right\vert }\nonumber\\
&  =\left(  X+Y\right)  ^{N}-XY^{N-1}-YY^{N-1}+XY^{N-1}=\left(  X+Y\right)
^{N}-\underbrace{YY^{N-1}}_{=Y^{N}}\nonumber\\
&  =\left(  X+Y\right)  ^{N}-Y^{N}. \label{pf.lem.12.step.part1}%
\end{align}

Now, forget that we fixed $t$. We thus have proven the equalities
(\ref{pf.lem.12.step.sum0}), (\ref{pf.lem.12.step.part1}) and
(\ref{pf.lem.12.step.ih1=1'o}) for each $t\in V$.

The set $V$ is nonempty (since $\left\vert V\right\vert =N>0$ (since $N$ is a
positive integer)). Hence, there exists some $q\in V$. Consider this $q$.
Applying (\ref{pf.lem.12.step.part1}) to $t=q$, we obtain%
\[
\sum_{\substack{S\subseteq V;\\S\neq\varnothing}}X\left(  X+x\left(  S\right)
\right)  ^{\left\vert S\right\vert -1}\left(  Y-x\left(  S\right)  \right)
^{N-\left\vert S\right\vert }=\left(  X+Y\right)  ^{N}-Y^{N}.
\]
Adding $Y^{N}$ to both sides of this equality, we obtain%
\begin{equation}
Y^{N}+\sum_{\substack{S\subseteq V;\\S\neq\varnothing}}X\left(  X+x\left(
S\right)  \right)  ^{\left\vert S\right\vert -1}\left(  Y-x\left(  S\right)
\right)  ^{N-\left\vert S\right\vert }=\left(  X+Y\right)  ^{N}.
\label{pf.lem.12.step.part1'}%
\end{equation}
Since $n=N$, we can rewrite this equality as follows:%
\[
Y^{n}+\sum_{\substack{S\subseteq V;\\S\neq\varnothing}}X\left(  X+x\left(
S\right)  \right)  ^{\left\vert S\right\vert -1}\left(  Y-x\left(  S\right)
\right)  ^{n-\left\vert S\right\vert }=\left(  X+Y\right)  ^{n}.
\]
In other words, the equality (\ref{eq.lem12.2}) holds.

On the other hand, every $t\in V$ satisfies%
\begin{align}
&  \sum_{\substack{S\subseteq V;\\S\neq\varnothing;\\t\in S}}\left(
X+x\left(  S\right)  \right)  ^{\left\vert S\right\vert -1}\left(  Y-x\left(
S\right)  \right)  ^{N-\left\vert S\right\vert }\nonumber\\
&  =\sum_{S\subseteq V\setminus\left\{  t\right\}  }\left(  X+x_{t}+x\left(
S\right)  \right)  ^{\left\vert S\right\vert }\left(  Y-x_{t}-x\left(
S\right)  \right)  ^{N-1-\left\vert S\right\vert }\ \ \ \ \ \ \ \ \ \ \left(
\text{by (\ref{pf.lem.12.step.sum0})}\right) \nonumber\\
&  =\sum_{\substack{i_{1},i_{2},\ldots,i_{k}\text{ are}\\\text{distinct}%
\\\text{elements of }V\setminus\left\{  t\right\}  }}\left(  X+Y\right)
^{N-1-k}x_{i_{1}}x_{i_{2}}\cdots x_{i_{k}}\ \ \ \ \ \ \ \ \ \ \left(  \text{by
(\ref{pf.lem.12.step.ih1=1'o})}\right)  . \label{pf.lem.12.step.innersum}%
\end{align}
Now, every subset $S$ of $V$ satisfies%
\begin{align}
x\left(  S\right)   &  =\underbrace{\sum_{s\in S}}_{\substack{=\sum
_{\substack{s\in V;\\s\in S}}\\\text{(since }S\subseteq V\text{)}}%
}x_{s}\ \ \ \ \ \ \ \ \ \ \left(  \text{by the definition of }x\left(
S\right)  \right) \nonumber\\
&  =\sum_{\substack{s\in V;\\s\in S}}x_{s}=\sum_{\substack{t\in V;\\t\in
S}}x_{t} \label{pf.lem.12.step.St}%
\end{align}
(here, we have substituted $t$ for $s$ in the sum).

Now,
\begin{align*}
&  \sum_{\substack{S\subseteq V;\\S\neq\varnothing}}\underbrace{x\left(
S\right)  }_{\substack{=\sum_{\substack{t\in V;\\t\in S}}x_{t}\\\text{(by
(\ref{pf.lem.12.step.St}))}}}\left(  X+x\left(  S\right)  \right)
^{\left\vert S\right\vert -1}\left(  Y-x\left(  S\right)  \right)
^{N-\left\vert S\right\vert }\\
&  =\sum_{\substack{S\subseteq V;\\S\neq\varnothing}}\left(  \sum
_{\substack{t\in V;\\t\in S}}x_{t}\right)  \left(  X+x\left(  S\right)
\right)  ^{\left\vert S\right\vert -1}\left(  Y-x\left(  S\right)  \right)
^{N-\left\vert S\right\vert }\\
&  =\underbrace{\sum_{\substack{S\subseteq V;\\S\neq\varnothing}%
}\ \ \sum_{\substack{t\in V;\\t\in S}}}_{=\sum_{t\in V}\ \ \sum
_{\substack{S\subseteq V;\\S\neq\varnothing;\\t\in S}}}x_{t}\left(  X+x\left(
S\right)  \right)  ^{\left\vert S\right\vert -1}\left(  Y-x\left(  S\right)
\right)  ^{N-\left\vert S\right\vert }\\
&  =\sum_{t\in V}\ \ \underbrace{\sum_{\substack{S\subseteq V;\\S\neq
\varnothing;\\t\in S}}x_{t}\left(  X+x\left(  S\right)  \right)  ^{\left\vert
S\right\vert -1}\left(  Y-x\left(  S\right)  \right)  ^{N-\left\vert
S\right\vert }}_{=x_{t}\sum_{\substack{S\subseteq V;\\S\neq\varnothing;\\t\in
S}}\left(  X+x\left(  S\right)  \right)  ^{\left\vert S\right\vert -1}\left(
Y-x\left(  S\right)  \right)  ^{N-\left\vert S\right\vert }}\\
&  =\sum_{t\in V}x_{t}\underbrace{\sum_{\substack{S\subseteq V;\\S\neq
\varnothing;\\t\in S}}\left(  X+x\left(  S\right)  \right)  ^{\left\vert
S\right\vert -1}\left(  Y-x\left(  S\right)  \right)  ^{N-\left\vert
S\right\vert }}_{\substack{=\sum_{\substack{i_{1},i_{2},\ldots,i_{k}\text{
are}\\\text{distinct}\\\text{elements of }V\setminus\left\{  t\right\}
}}\left(  X+Y\right)  ^{N-1-k}x_{i_{1}}x_{i_{2}}\cdots x_{i_{k}}\\\text{(by
(\ref{pf.lem.12.step.innersum}))}}}
\end{align*}%
\begin{align}
&  =\sum_{t\in V}\underbrace{x_{t}\sum_{\substack{i_{1},i_{2},\ldots
,i_{k}\text{ are}\\\text{distinct}\\\text{elements of }V\setminus\left\{
t\right\}  }}\left(  X+Y\right)  ^{N-1-k}x_{i_{1}}x_{i_{2}}\cdots x_{i_{k}}%
}_{=\sum_{\substack{i_{1},i_{2},\ldots,i_{k}\text{ are}\\\text{distinct}%
\\\text{elements of }V\setminus\left\{  t\right\}  }}x_{t}\left(  X+Y\right)
^{N-1-k}x_{i_{1}}x_{i_{2}}\cdots x_{i_{k}}}\nonumber\\
&  =\sum_{t\in V}\sum_{\substack{i_{1},i_{2},\ldots,i_{k}\text{ are}%
\\\text{distinct}\\\text{elements of }V\setminus\left\{  t\right\}
}}\underbrace{x_{t}\left(  X+Y\right)  ^{N-1-k}}_{\substack{=\left(
X+Y\right)  ^{N-1-k}x_{t}\\\text{(since }\left(  X+Y\right)  ^{N-1-k}\text{
commutes with }x_{t}\\\text{(since }\left(  X+Y\right)  ^{N-1-k}\text{ lies in
the center of }\mathbb{L}\\\text{(since }X+Y\text{ lies in the center of
}\mathbb{L}\text{,}\\\text{but the center of }\mathbb{L}\text{ is a subring of
}\mathbb{L}\text{)))}}}x_{i_{1}}x_{i_{2}}\cdots x_{i_{k}}\nonumber\\
&  =\sum_{t\in V}\underbrace{\sum_{\substack{i_{1},i_{2},\ldots,i_{k}\text{
are}\\\text{distinct}\\\text{elements of }V\setminus\left\{  t\right\}  }%
}}_{=\sum_{k\in\mathbb{N}}\sum_{\substack{i_{1},i_{2},\ldots,i_{k}\text{
are}\\\text{distinct}\\\text{elements of }V\setminus\left\{  t\right\}  }%
}}\left(  X+Y\right)  ^{N-1-k}x_{t}\left(  x_{i_{1}}x_{i_{2}}\cdots x_{i_{k}%
}\right) \nonumber\\
&  =\sum_{t\in V}\sum_{k\in\mathbb{N}}\sum_{\substack{i_{1},i_{2},\ldots
,i_{k}\text{ are}\\\text{distinct}\\\text{elements of }V\setminus\left\{
t\right\}  }}\left(  X+Y\right)  ^{N-1-k}x_{t}\left(  x_{i_{1}}x_{i_{2}}\cdots
x_{i_{k}}\right)  . \label{pf.lem.12.step.u1}%
\end{align}

Let us next observe that if $i_{1},i_{2},\ldots,i_{k}$ are distinct elements
of $V$ (for some $k\in\mathbb{N}$), then $N-k\in\mathbb{N}$%
\ \ \ \ \footnote{\textit{Proof.} Let $i_{1},i_{2},\ldots,i_{k}$ be distinct
elements of $V$. Then, $\left\{  i_{1},i_{2},\ldots,i_{k}\right\}  $ is a
subset of $V$. Hence, $\left\vert \left\{  i_{1},i_{2},\ldots,i_{k}\right\}
\right\vert \leq\left\vert V\right\vert =n=N$. But the elements $i_{1}%
,i_{2},\ldots,i_{k}$ are distinct; hence, $\left\vert \left\{  i_{1}%
,i_{2},\ldots,i_{k}\right\}  \right\vert =k$. Thus, $k=\left\vert \left\{
i_{1},i_{2},\ldots,i_{k}\right\}  \right\vert \leq N$. Hence, $N-k\geq0$.
Therefore, $N-k\in\mathbb{N}$. Qed.}, and therefore the term $\left(
X+Y\right)  ^{N-k}$ is a well-defined element of $\mathbb{L}$. Hence, the sum
$\sum_{\substack{i_{1},i_{2},\ldots,i_{k}\text{ are}\\\text{distinct}%
\\\text{elements of }V}}\left(  X+Y\right)  ^{N-k}x_{i_{1}}x_{i_{2}}\cdots
x_{i_{k}}$ is well-defined.

On the other hand, consider the sum $\sum_{\substack{i_{1},i_{2},\ldots
,i_{0}\text{ are}\\\text{distinct}\\\text{elements of }V}}\left(  X+Y\right)
^{N}$. This sum has only one addend (since there exists only one $0$-tuple
$\left(  i_{1},i_{2},\ldots,i_{0}\right)  $ of distinct elements of $V$,
namely the empty $0$-tuple $\left(  {}\right)  $), namely the addend for
$\left(  i_{1},i_{2},\ldots,i_{0}\right)  =\left(  {}\right)  $. Thus, this
sum simplifies as follows:%
\[
\sum_{\substack{i_{1},i_{2},\ldots,i_{0}\text{ are}\\\text{distinct}%
\\\text{elements of }V}}\left(  X+Y\right)  ^{N}=\left(  X+Y\right)  ^{N}.
\]
Hence,%
\[
\sum_{\substack{i_{1},i_{2},\ldots,i_{0}\text{ are}\\\text{distinct}%
\\\text{elements of }V}}\underbrace{\left(  X+Y\right)  ^{N-0}}_{=\left(
X+Y\right)  ^{N}}\underbrace{x_{i_{1}}x_{i_{2}}\cdots x_{i_{0}}}_{=\left(
\text{empty product}\right)  =1}=\sum_{\substack{i_{1},i_{2},\ldots
,i_{0}\text{ are}\\\text{distinct}\\\text{elements of }V}}\left(  X+Y\right)
^{N}=\left(  X+Y\right)  ^{N}.
\]

Now,%
\begin{align*}
&  \sum_{\substack{i_{1},i_{2},\ldots,i_{k}\text{ are}\\\text{distinct}%
\\\text{elements of }V}}\left(  X+Y\right)  ^{N-k}x_{i_{1}}x_{i_{2}}\cdots
x_{i_{k}}\\
&  =\underbrace{\sum_{\substack{i_{1},i_{2},\ldots,i_{k}\text{ are}%
\\\text{distinct}\\\text{elements of }V;\\k=0}}\left(  X+Y\right)
^{N-k}x_{i_{1}}x_{i_{2}}\cdots x_{i_{k}}}_{\substack{=\sum_{\substack{i_{1}%
,i_{2},\ldots,i_{0}\text{ are}\\\text{distinct}\\\text{elements of }V}}\left(
X+Y\right)  ^{N-0}x_{i_{1}}x_{i_{2}}\cdots x_{i_{0}}\\=\left(  X+Y\right)
^{N}}}+\sum_{\substack{i_{1},i_{2},\ldots,i_{k}\text{ are}\\\text{distinct}%
\\\text{elements of }V;\\k\neq0}}\left(  X+Y\right)  ^{N-k}x_{i_{1}}x_{i_{2}%
}\cdots x_{i_{k}}\\
&  \ \ \ \ \ \ \ \ \ \ \ \ \ \ \ \ \ \ \ \ \left(
\begin{array}
[c]{c}%
\text{since every }k\text{-tuple }\left(  i_{1},i_{2},\ldots,i_{k}\right)
\text{ satisfies}\\
\text{either }k=0\text{ or }k\neq0\text{ (but not both)}%
\end{array}
\right) \\
&  =\left(  X+Y\right)  ^{N}+\sum_{\substack{i_{1},i_{2},\ldots,i_{k}\text{
are}\\\text{distinct}\\\text{elements of }V;\\k\neq0}}\left(  X+Y\right)
^{N-k}x_{i_{1}}x_{i_{2}}\cdots x_{i_{k}}.
\end{align*}
Subtracting $\left(  X+Y\right)  ^{N}$ from both sides of this equality, we
obtain%
\begin{align}
&  \sum_{\substack{i_{1},i_{2},\ldots,i_{k}\text{ are}\\\text{distinct}%
\\\text{elements of }V}}\left(  X+Y\right)  ^{N-k}x_{i_{1}}x_{i_{2}}\cdots
x_{i_{k}}-\left(  X+Y\right)  ^{N}\nonumber\\
&  =\underbrace{\sum_{\substack{i_{1},i_{2},\ldots,i_{k}\text{ are}%
\\\text{distinct}\\\text{elements of }V;\\k\neq0}}}_{\substack{=\sum
_{\substack{i_{1},i_{2},\ldots,i_{k}\text{ are}\\\text{distinct}%
\\\text{elements of }V;\\k>0}}\\\text{(because for any }k\text{-tuple }\left(
i_{1},i_{2},\ldots,i_{k}\right)  \text{,}\\\text{the condition }\left(
k\neq0\right)  \text{ is equivalent to}\\\text{the condition }\left(
k>0\right)  \text{ (since }k\in\mathbb{N}\text{))}}}\left(  X+Y\right)
^{N-k}x_{i_{1}}x_{i_{2}}\cdots x_{i_{k}}\nonumber\\
&  =\sum_{\substack{i_{1},i_{2},\ldots,i_{k}\text{ are}\\\text{distinct}%
\\\text{elements of }V;\\k>0}}\left(  X+Y\right)  ^{N-k}x_{i_{1}}x_{i_{2}%
}\cdots x_{i_{k}}\nonumber\\
&  =\sum_{k>0}\sum_{\substack{i_{1},i_{2},\ldots,i_{k}\text{ are}%
\\\text{distinct}\\\text{elements of }V}}\left(  X+Y\right)  ^{N-k}x_{i_{1}%
}x_{i_{2}}\cdots x_{i_{k}}\nonumber\\
&  =\sum_{k\in\mathbb{N}}\sum_{\substack{i_{1},i_{2},\ldots,i_{k+1}\text{
are}\\\text{distinct}\\\text{elements of }V}}\left(  X+Y\right)  ^{N-\left(
k+1\right)  }\underbrace{x_{i_{1}}x_{i_{2}}\cdots x_{i_{k+1}}}%
_{\substack{=x_{i_{1}}\left(  x_{i_{2}}x_{i_{3}}\cdots x_{i_{k+1}}\right)
\\\text{(since }k\in\mathbb{N}\text{)}}}\nonumber\\
&  \ \ \ \ \ \ \ \ \ \ \ \ \ \ \ \ \ \ \ \ \left(  \text{here, we substituted
}k+1\text{ for }k\text{ in the outer sum}\right) \nonumber\\
&  =\sum_{k\in\mathbb{N}}\underbrace{\sum_{\substack{i_{1},i_{2}%
,\ldots,i_{k+1}\text{ are}\\\text{distinct}\\\text{elements of }V}}\left(
X+Y\right)  ^{N-\left(  k+1\right)  }x_{i_{1}}\left(  x_{i_{2}}x_{i_{3}}\cdots
x_{i_{k+1}}\right)  }_{\substack{=\sum_{\substack{t,i_{1},i_{2},\ldots
,i_{k}\text{ are}\\\text{distinct}\\\text{elements of }V}}\left(  X+Y\right)
^{N-\left(  k+1\right)  }x_{t}\left(  x_{i_{1}}x_{i_{2}}\cdots x_{i_{k}%
}\right)  \\\text{(here, we renamed the summation index }\left(  i_{1}%
,i_{2},\ldots,i_{k+1}\right)  \text{ as }\left(  t,i_{1},i_{2},\ldots
,i_{k}\right)  \text{)}}}\nonumber\\
&  =\sum_{k\in\mathbb{N}}\underbrace{\sum_{\substack{t,i_{1},i_{2}%
,\ldots,i_{k}\text{ are}\\\text{distinct}\\\text{elements of }V}}}%
_{=\sum_{\substack{t,i_{1},i_{2},\ldots,i_{k}\text{ are elements of
}V;\\\text{the elements }t,i_{1},i_{2},\ldots,i_{k}\text{ are distinct}}%
}}\left(  X+Y\right)  ^{N-\left(  k+1\right)  }x_{t}\left(  x_{i_{1}}x_{i_{2}%
}\cdots x_{i_{k}}\right) \nonumber\\
&  =\sum_{k\in\mathbb{N}}\sum_{\substack{t,i_{1},i_{2},\ldots,i_{k}\text{ are
elements of }V;\\\text{the elements }t,i_{1},i_{2},\ldots,i_{k}\text{ are
distinct}}}\left(  X+Y\right)  ^{N-\left(  k+1\right)  }x_{t}\left(  x_{i_{1}%
}x_{i_{2}}\cdots x_{i_{k}}\right)  . \label{pf.lem.12.step.u2}%
\end{align}

But let $k\in\mathbb{N}$ be arbitrary. If $\left(  t,i_{1},i_{2},\ldots
,i_{k}\right)  $ is any $\left(  k+1\right)  $-tuple of elements of $V$, then
we have the following chain of equivalences:%
\begin{align*}
&  \ \left(  \text{the elements }t,i_{1},i_{2},\ldots,i_{k}\text{ are
distinct}\right) \\
&  \Longleftrightarrow\ \left(  \text{the elements }i_{1},i_{2},\ldots
,i_{k}\text{ are distinct and differ from }t\right) \\
&  \Longleftrightarrow\ \left(  \text{the elements }i_{1},i_{2},\ldots
,i_{k}\text{ are distinct}\right)  \wedge\underbrace{\left(  \text{the
elements }i_{1},i_{2},\ldots,i_{k}\text{ differ from }t\right)  }%
_{\Longleftrightarrow\ \left(  \text{the elements }i_{1},i_{2},\ldots
,i_{k}\text{ belong to }V\setminus\left\{  t\right\}  \right)  }\\
&  \Longleftrightarrow\ \left(  \text{the elements }i_{1},i_{2},\ldots
,i_{k}\text{ are distinct}\right)  \wedge\left(  \text{the elements }%
i_{1},i_{2},\ldots,i_{k}\text{ belong to }V\setminus\left\{  t\right\}
\right) \\
&  \Longleftrightarrow\ \left(  \text{the elements }i_{1},i_{2},\ldots
,i_{k}\text{ are distinct elements of }V\setminus\left\{  t\right\}  \right)
.
\end{align*}
Hence, we have the following equality of summation signs:%
\begin{align}
\sum_{\substack{t,i_{1},i_{2},\ldots,i_{k}\text{ are elements of
}V;\\\text{the elements }t,i_{1},i_{2},\ldots,i_{k}\text{ are distinct}}}  &
=\sum_{\substack{t,i_{1},i_{2},\ldots,i_{k}\text{ are elements of
}V;\\\text{the elements }i_{1},i_{2},\ldots,i_{k}\text{ are}\\\text{distinct
elements of }V\setminus\left\{  t\right\}  }}\nonumber\\
&  =\sum_{t\in V}\underbrace{\sum_{\substack{i_{1},i_{2},\ldots,i_{k}\text{
are elements of }V;\\\text{the elements }i_{1},i_{2},\ldots,i_{k}\text{
are}\\\text{distinct elements of }V\setminus\left\{  t\right\}  }%
}}_{\substack{=\sum_{\substack{i_{1},i_{2},\ldots,i_{k}\text{ are distinct
elements of }V\setminus\left\{  t\right\}  ;\\\text{the elements }i_{1}%
,i_{2},\ldots,i_{k}\text{ are elements of }V}}\\=\sum_{\substack{i_{1}%
,i_{2},\ldots,i_{k}\text{ are}\\\text{distinct elements of }V\setminus\left\{
t\right\}  }}\\\text{(because if }i_{1},i_{2},\ldots,i_{k}\text{
are}\\\text{distinct elements of }V\setminus\left\{  t\right\}  \text{,
then}\\i_{1},i_{2},\ldots,i_{k}\text{ must automatically}\\\text{be elements
of }V\\\text{(since }i_{1},i_{2},\ldots,i_{k}\text{ are elements of
}V\setminus\left\{  t\right\}  \text{,}\\\text{but }V\setminus\left\{
t\right\}  \text{ is a subset of }V\text{))}}}\nonumber\\
&  =\sum_{t\in V}\sum_{\substack{i_{1},i_{2},\ldots,i_{k}\text{ are}%
\\\text{distinct elements of }V\setminus\left\{  t\right\}  }}.
\label{pf.lem.12.step.u4}%
\end{align}

Now, forget that we fixed $k$. We thus have proven the equality
(\ref{pf.lem.12.step.u4}) for each $k\in\mathbb{N}$. Now,
(\ref{pf.lem.12.step.u2}) becomes%
\begin{align*}
&  \sum_{\substack{i_{1},i_{2},\ldots,i_{k}\text{ are}\\\text{distinct}%
\\\text{elements of }V}}\left(  X+Y\right)  ^{N-k}x_{i_{1}}x_{i_{2}}\cdots
x_{i_{k}}-\left(  X+Y\right)  ^{N}\\
&  =\sum_{k\in\mathbb{N}}\underbrace{\sum_{\substack{t,i_{1},i_{2}%
,\ldots,i_{k}\text{ are elements of }V;\\\text{the elements }t,i_{1}%
,i_{2},\ldots,i_{k}\text{ are distinct}}}}_{\substack{=\sum_{t\in V}%
\sum_{\substack{i_{1},i_{2},\ldots,i_{k}\text{ are}\\\text{distinct elements
of }V\setminus\left\{  t\right\}  }}\\\text{(by (\ref{pf.lem.12.step.u4}))}%
}}\underbrace{\left(  X+Y\right)  ^{N-\left(  k+1\right)  }}%
_{\substack{=\left(  X+Y\right)  ^{N-1-k}\\\text{(since }N-\left(  k+1\right)
=N-1-k\text{)}}}x_{t}\left(  x_{i_{1}}x_{i_{2}}\cdots x_{i_{k}}\right) \\
&  =\underbrace{\sum_{k\in\mathbb{N}}\sum_{t\in V}}_{=\sum_{t\in V}\sum
_{k\in\mathbb{N}}}\sum_{\substack{i_{1},i_{2},\ldots,i_{k}\text{
are}\\\text{distinct elements of }V\setminus\left\{  t\right\}  }}\left(
X+Y\right)  ^{N-1-k}x_{t}\left(  x_{i_{1}}x_{i_{2}}\cdots x_{i_{k}}\right) \\
&  =\sum_{t\in V}\sum_{k\in\mathbb{N}}\sum_{\substack{i_{1},i_{2},\ldots
,i_{k}\text{ are}\\\text{distinct elements of }V\setminus\left\{  t\right\}
}}\left(  X+Y\right)  ^{N-1-k}x_{t}\left(  x_{i_{1}}x_{i_{2}}\cdots x_{i_{k}%
}\right) \\
&  =\sum_{t\in V}\sum_{k\in\mathbb{N}}\sum_{\substack{i_{1},i_{2},\ldots
,i_{k}\text{ are}\\\text{distinct}\\\text{elements of }V\setminus\left\{
t\right\}  }}\left(  X+Y\right)  ^{N-1-k}x_{t}\left(  x_{i_{1}}x_{i_{2}}\cdots
x_{i_{k}}\right)  .
\end{align*}
Comparing this with (\ref{pf.lem.12.step.u1}), we obtain%
\begin{align}
&  \sum_{\substack{S\subseteq V;\\S\neq\varnothing}}x\left(  S\right)  \left(
X+x\left(  S\right)  \right)  ^{\left\vert S\right\vert -1}\left(  Y-x\left(
S\right)  \right)  ^{N-\left\vert S\right\vert }\nonumber\\
&  =\sum_{\substack{i_{1},i_{2},\ldots,i_{k}\text{ are}\\\text{distinct}%
\\\text{elements of }V}}\left(  X+Y\right)  ^{N-k}x_{i_{1}}x_{i_{2}}\cdots
x_{i_{k}}-\left(  X+Y\right)  ^{N}. \label{pf.lem.12.step.v1}%
\end{align}
Adding this equality to (\ref{pf.lem.12.step.part1'}), we obtain%
\begin{align}
&  Y^{N}+\sum_{\substack{S\subseteq V;\\S\neq\varnothing}}X\left(  X+x\left(
S\right)  \right)  ^{\left\vert S\right\vert -1}\left(  Y-x\left(  S\right)
\right)  ^{N-\left\vert S\right\vert }\nonumber\\
&  \ \ \ \ \ \ \ \ \ \ +\sum_{\substack{S\subseteq V;\\S\neq\varnothing
}}x\left(  S\right)  \left(  X+x\left(  S\right)  \right)  ^{\left\vert
S\right\vert -1}\left(  Y-x\left(  S\right)  \right)  ^{N-\left\vert
S\right\vert }\nonumber\\
&  =\left(  X+Y\right)  ^{N}+\sum_{\substack{i_{1},i_{2},\ldots,i_{k}\text{
are}\\\text{distinct}\\\text{elements of }V}}\left(  X+Y\right)
^{N-k}x_{i_{1}}x_{i_{2}}\cdots x_{i_{k}}-\left(  X+Y\right)  ^{N}\nonumber\\
&  =\sum_{\substack{i_{1},i_{2},\ldots,i_{k}\text{ are}\\\text{distinct}%
\\\text{elements of }V}}\left(  X+Y\right)  ^{N-k}x_{i_{1}}x_{i_{2}}\cdots
x_{i_{k}}. \label{pf.lem.12.step.part2}%
\end{align}
Now,%
\begin{align*}
&  \sum_{S\subseteq V}\left(  X+x\left(  S\right)  \right)  ^{\left\vert
S\right\vert }\left(  Y-x\left(  S\right)  \right)  ^{N-\left\vert
S\right\vert }\\
&  =\underbrace{\left(  X+x\left(  \varnothing\right)  \right)  ^{\left\vert
\varnothing\right\vert }}_{\substack{=\left(  X+x\left(  \varnothing\right)
\right)  ^{0}\\\text{(since }\left\vert \varnothing\right\vert =0\text{)}%
}}\left(  Y-\underbrace{x\left(  \varnothing\right)  }%
_{\substack{=0\\\text{(by Lemma \ref{lem.x(S).triv} \textbf{(a)})}}}\right)
^{N-\left\vert \varnothing\right\vert }+\sum_{\substack{S\subseteq
V;\\S\neq\varnothing}}\left(  X+x\left(  S\right)  \right)  ^{\left\vert
S\right\vert }\left(  Y-x\left(  S\right)  \right)  ^{N-\left\vert
S\right\vert }\\
&  \ \ \ \ \ \ \ \ \ \ \ \ \ \ \ \ \ \ \ \ \left(
\begin{array}
[c]{c}%
\text{here, we have split off the addend for }S=\varnothing\text{ from the
sum}\\
\text{(since }\varnothing\text{ is a subset of }V\text{)}%
\end{array}
\right) \\
&  =\underbrace{\left(  X+x\left(  \varnothing\right)  \right)  ^{0}}%
_{=1}\underbrace{\left(  Y-0\right)  ^{N-\left\vert \varnothing\right\vert }%
}_{\substack{=Y^{N-0}\\\text{(since }Y-0=Y\text{ and }\left\vert
\varnothing\right\vert =0\text{)}}}+\sum_{\substack{S\subseteq V;\\S\neq
\varnothing}}\left(  X+x\left(  S\right)  \right)  ^{\left\vert S\right\vert
}\left(  Y-x\left(  S\right)  \right)  ^{N-\left\vert S\right\vert }\\
&  =\underbrace{Y^{N-0}}_{=Y^{N}}+\sum_{\substack{S\subseteq V;\\S\neq
\varnothing}}\ \ \underbrace{\left(  X+x\left(  S\right)  \right)
^{\left\vert S\right\vert }}_{\substack{=\left(  X+x\left(  S\right)  \right)
\left(  X+x\left(  S\right)  \right)  ^{\left\vert S\right\vert -1}%
\\\text{(since }\left\vert S\right\vert -1\geq0\\\text{(by
(\ref{pf.lem.12.step.S-1>=0})))}}}\left(  Y-x\left(  S\right)  \right)
^{N-\left\vert S\right\vert }\\
&  =Y^{N}+\sum_{\substack{S\subseteq V;\\S\neq\varnothing}%
}\ \ \underbrace{\left(  X+x\left(  S\right)  \right)  \left(  X+x\left(
S\right)  \right)  ^{\left\vert S\right\vert -1}\left(  Y-x\left(  S\right)
\right)  ^{N-\left\vert S\right\vert }}_{=X\left(  X+x\left(  S\right)
\right)  ^{\left\vert S\right\vert -1}\left(  Y-x\left(  S\right)  \right)
^{N-\left\vert S\right\vert }+x\left(  S\right)  \left(  X+x\left(  S\right)
\right)  ^{\left\vert S\right\vert -1}\left(  Y-x\left(  S\right)  \right)
^{N-\left\vert S\right\vert }}\\
&  =Y^{N}+\underbrace{\sum_{\substack{S\subseteq V;\\S\neq\varnothing}}\left(
X\left(  X+x\left(  S\right)  \right)  ^{\left\vert S\right\vert -1}\left(
Y-x\left(  S\right)  \right)  ^{N-\left\vert S\right\vert }+x\left(  S\right)
\left(  X+x\left(  S\right)  \right)  ^{\left\vert S\right\vert -1}\left(
Y-x\left(  S\right)  \right)  ^{N-\left\vert S\right\vert }\right)  }%
_{=\sum_{\substack{S\subseteq V;\\S\neq\varnothing}}X\left(  X+x\left(
S\right)  \right)  ^{\left\vert S\right\vert -1}\left(  Y-x\left(  S\right)
\right)  ^{N-\left\vert S\right\vert }+\sum_{\substack{S\subseteq
V;\\S\neq\varnothing}}x\left(  S\right)  \left(  X+x\left(  S\right)  \right)
^{\left\vert S\right\vert -1}\left(  Y-x\left(  S\right)  \right)
^{N-\left\vert S\right\vert }}\\
&  =Y^{N}+\sum_{\substack{S\subseteq V;\\S\neq\varnothing}}X\left(  X+x\left(
S\right)  \right)  ^{\left\vert S\right\vert -1}\left(  Y-x\left(  S\right)
\right)  ^{N-\left\vert S\right\vert }\\
&  \ \ \ \ \ \ \ \ \ \ +\sum_{\substack{S\subseteq V;\\S\neq\varnothing
}}x\left(  S\right)  \left(  X+x\left(  S\right)  \right)  ^{\left\vert
S\right\vert -1}\left(  Y-x\left(  S\right)  \right)  ^{N-\left\vert
S\right\vert }\\
&  =\sum_{\substack{i_{1},i_{2},\ldots,i_{k}\text{ are}\\\text{distinct}%
\\\text{elements of }V}}\left(  X+Y\right)  ^{N-k}x_{i_{1}}x_{i_{2}}\cdots
x_{i_{k}}\ \ \ \ \ \ \ \ \ \ \left(  \text{by (\ref{pf.lem.12.step.part2}%
)}\right)  .
\end{align*}
Since $n=N$, we can rewrite this equality as follows:%
\begin{equation}
\sum_{S\subseteq V}\left(  X+x\left(  S\right)  \right)  ^{\left\vert
S\right\vert }\left(  Y-x\left(  S\right)  \right)  ^{n-\left\vert
S\right\vert }=\sum_{\substack{i_{1},i_{2},\ldots,i_{k}\text{ are}%
\\\text{distinct}\\\text{elements of }V}}\left(  X+Y\right)  ^{n-k}x_{i_{1}%
}x_{i_{2}}\cdots x_{i_{k}}.\nonumber
\end{equation}
In other words, the equality (\ref{eq.lem12.1}) holds.

We have thus shown that the equalities (\ref{eq.lem12.1}) and
(\ref{eq.lem12.2}) hold.

Now, forget that we fixed $V$, $n$, $x_{s}$, $X$ and $Y$ and assumed that
$n=N$. We thus have proven that if $V$, $n$, $x_{s}$, $X$ and $Y$ are as in
Lemma \ref{lem.12}, and if we have $n=N$, then the equalities
(\ref{eq.lem12.1}) and (\ref{eq.lem12.2}) hold. In other words, Lemma
\ref{lem.12} holds when $n=N$. This completes the induction step. Thus, Lemma
\ref{lem.12} is proven by induction.
\end{proof}

We can now prove Theorem \ref{thm.1} and Theorem \ref{thm.2} in their original forms:

\begin{proof}
[Proof of Theorem \ref{thm.1}.]From (\ref{eq.lem12.1}), we obtain%
\begin{align*}
&  \sum_{\substack{i_{1},i_{2},\ldots,i_{k}\text{ are}\\\text{distinct}%
\\\text{elements of }V}}\left(  X+Y\right)  ^{n-k}x_{i_{1}}x_{i_{2}}\cdots
x_{i_{k}}\\
&  =\sum_{S\subseteq V}\left(  X+\underbrace{x\left(  S\right)  }%
_{\substack{=\sum_{s\in S}x_{s}\\\text{(by the definition of }x\left(
S\right)  \text{)}}}\right)  ^{\left\vert S\right\vert }\left(
Y-\underbrace{x\left(  S\right)  }_{\substack{=\sum_{s\in S}x_{s}\\\text{(by
the definition of }x\left(  S\right)  \text{)}}}\right)  ^{n-\left\vert
S\right\vert }\\
&  =\sum_{S\subseteq V}\left(  X+\sum_{s\in S}x_{s}\right)  ^{\left\vert
S\right\vert }\left(  Y-\sum_{s\in S}x_{s}\right)  ^{n-\left\vert S\right\vert
}.
\end{align*}
In other words,%
\[
\sum_{S\subseteq V}\left(  X+\sum_{s\in S}x_{s}\right)  ^{\left\vert
S\right\vert }\left(  Y-\sum_{s\in S}x_{s}\right)  ^{n-\left\vert S\right\vert
}=\sum_{\substack{i_{1},i_{2},\ldots,i_{k}\text{ are}\\\text{distinct}%
\\\text{elements of }V}}\left(  X+Y\right)  ^{n-k}x_{i_{1}}x_{i_{2}}\cdots
x_{i_{k}}.
\]
This proves Theorem \ref{thm.1}.
\end{proof}

\begin{proof}
[Proof of Theorem \ref{thm.2}.]From (\ref{eq.lem12.2}), we obtain%
\begin{align*}
&  \left(  X+Y\right)  ^{n}\\
&  =Y^{n}+\sum_{\substack{S\subseteq V;\\S\neq\varnothing}}X\left(
X+\underbrace{x\left(  S\right)  }_{\substack{=\sum_{s\in S}x_{s}\\\text{(by
the definition of }x\left(  S\right)  \text{)}}}\right)  ^{\left\vert
S\right\vert -1}\left(  Y-\underbrace{x\left(  S\right)  }_{\substack{=\sum
_{s\in S}x_{s}\\\text{(by the definition of }x\left(  S\right)  \text{)}%
}}\right)  ^{n-\left\vert S\right\vert }\\
&  =Y^{n}+\sum_{\substack{S\subseteq V;\\S\neq\varnothing}}X\left(
X+\sum_{s\in S}x_{s}\right)  ^{\left\vert S\right\vert -1}\left(  Y-\sum_{s\in
S}x_{s}\right)  ^{n-\left\vert S\right\vert }.
\end{align*}
Comparing this with%
\begin{align*}
&  \sum_{S\subseteq V}X\left(  X+\sum_{s\in S}x_{s}\right)  ^{\left\vert
S\right\vert -1}\left(  Y-\sum_{s\in S}x_{s}\right)  ^{n-\left\vert
S\right\vert }\\
&  =\underbrace{X\left(  X+\sum_{s\in\varnothing}x_{s}\right)  ^{\left\vert
\varnothing\right\vert -1}}_{\substack{=1\\\text{(according to our convention
for}\\\text{interpreting }X\left(  X+\sum_{s\in S}x_{s}\right)  ^{\left\vert
S\right\vert -1}\\\text{when }S=\varnothing\text{)}}}\left(
Y-\underbrace{\sum_{s\in\varnothing}x_{s}}_{=\left(  \text{empty sum}\right)
=0}\right)  ^{n-\left\vert \varnothing\right\vert }\\
&  \ \ \ \ \ \ \ \ \ \ +\sum_{\substack{S\subseteq V;\\S\neq\varnothing
}}X\left(  X+\sum_{s\in S}x_{s}\right)  ^{\left\vert S\right\vert -1}\left(
Y-\sum_{s\in S}x_{s}\right)  ^{n-\left\vert S\right\vert }\\
&  \ \ \ \ \ \ \ \ \ \ \ \ \ \ \ \ \ \ \ \ \left(
\begin{array}
[c]{c}%
\text{here, we have split off the addend for }S=\varnothing\text{ from the
sum}\\
\text{(since }\varnothing\text{ is a subset of }V\text{)}%
\end{array}
\right) \\
&  =\underbrace{\left(  Y-0\right)  ^{n-\left\vert \varnothing\right\vert }%
}_{\substack{=Y^{n-0}\\\text{(since }Y-0=Y\\\text{and }\left\vert
\varnothing\right\vert =0\text{)}}}+\sum_{\substack{S\subseteq V;\\S\neq
\varnothing}}X\left(  X+\sum_{s\in S}x_{s}\right)  ^{\left\vert S\right\vert
-1}\left(  Y-\sum_{s\in S}x_{s}\right)  ^{n-\left\vert S\right\vert }\\
&  =\underbrace{Y^{n-0}}_{=Y^{n}}+\sum_{\substack{S\subseteq V;\\S\neq
\varnothing}}X\left(  X+\sum_{s\in S}x_{s}\right)  ^{\left\vert S\right\vert
-1}\left(  Y-\sum_{s\in S}x_{s}\right)  ^{n-\left\vert S\right\vert }\\
&  =Y^{n}+\sum_{\substack{S\subseteq V;\\S\neq\varnothing}}X\left(
X+\sum_{s\in S}x_{s}\right)  ^{\left\vert S\right\vert -1}\left(  Y-\sum_{s\in
S}x_{s}\right)  ^{n-\left\vert S\right\vert },
\end{align*}
we obtain%
\[
\sum_{S\subseteq V}X\left(  X+\sum_{s\in S}x_{s}\right)  ^{\left\vert
S\right\vert -1}\left(  Y-\sum_{s\in S}x_{s}\right)  ^{n-\left\vert
S\right\vert }=\left(  X+Y\right)  ^{n}.
\]
This proves Theorem \ref{thm.2}.
\end{proof}
\end{verlong}

\subsection{Proofs of Theorems \ref{thm.4} and \ref{thm.5}}

\begin{vershort}
\begin{proof}
[Proof of Theorem \ref{thm.4}.]We have\footnote{The reader is invited to check
that the below manipulations work even for the $S=V$ addend (despite, or
rather because of, our special rule for interpreting $\left(  Y-\sum_{s\in
S}x_{s}\right)  ^{n-\left\vert S\right\vert -1}\left(  Y-\sum_{s\in V}%
x_{s}\right)  $ in this case).}
\begin{align*}
&  \sum_{S\subseteq V}X\left(  X+\sum_{s\in S}x_{s}\right)  ^{\left\vert
S\right\vert -1}\left(  Y-\sum_{s\in S}x_{s}\right)  ^{n-\left\vert
S\right\vert -1}\underbrace{\left(  Y-\sum_{s\in V}x_{s}\right)  }_{=\left(
Y-\sum_{s\in S}x_{s}\right)  -\sum_{t\in V\setminus S}x_{t}}\\
&  =\sum_{S\subseteq V}X\left(  X+\sum_{s\in S}x_{s}\right)  ^{\left\vert
S\right\vert -1}\left(  Y-\sum_{s\in S}x_{s}\right)  ^{n-\left\vert
S\right\vert -1}\left(  \left(  Y-\sum_{s\in S}x_{s}\right)  -\sum_{t\in
V\setminus S}x_{t}\right) \\
&  =\sum_{S\subseteq V}X\left(  X+\sum_{s\in S}x_{s}\right)  ^{\left\vert
S\right\vert -1}\underbrace{\left(  Y-\sum_{s\in S}x_{s}\right)
^{n-\left\vert S\right\vert -1}\left(  Y-\sum_{s\in S}x_{s}\right)
}_{=\left(  Y-\sum_{s\in S}x_{s}\right)  ^{n-\left\vert S\right\vert }}\\
&  \ \ \ \ \ \ \ \ \ \ -\underbrace{\sum_{S\subseteq V}X\left(  X+\sum_{s\in
S}x_{s}\right)  ^{\left\vert S\right\vert -1}\left(  Y-\sum_{s\in S}%
x_{s}\right)  ^{n-\left\vert S\right\vert -1}\sum_{t\in V\setminus S}x_{t}%
}_{=\sum_{S\subseteq V}\sum_{t\in V\setminus S}X\left(  X+\sum_{s\in S}%
x_{s}\right)  ^{\left\vert S\right\vert -1}\left(  Y-\sum_{s\in S}%
x_{s}\right)  ^{n-\left\vert S\right\vert -1}x_{t}}\\
&  =\underbrace{\sum_{S\subseteq V}X\left(  X+\sum_{s\in S}x_{s}\right)
^{\left\vert S\right\vert -1}\left(  Y-\sum_{s\in S}x_{s}\right)
^{n-\left\vert S\right\vert }}_{\substack{=\left(  X+Y\right)  ^{n}\\\text{(by
Theorem \ref{thm.2})}}}\\
&  \ \ \ \ \ \ \ \ \ \ -\underbrace{\sum_{S\subseteq V}\sum_{t\in V\setminus
S}}_{=\sum_{t\in V}\sum_{S\subseteq V\setminus\left\{  t\right\}  }}X\left(
X+\sum_{s\in S}x_{s}\right)  ^{\left\vert S\right\vert -1}\underbrace{\left(
Y-\sum_{s\in S}x_{s}\right)  ^{n-\left\vert S\right\vert -1}}_{=\left(
Y-\sum_{s\in S}x_{s}\right)  ^{n-1-\left\vert S\right\vert }}x_{t}%
\end{align*}%
\begin{align*}
&  =\left(  X+Y\right)  ^{n}-\sum_{t\in V}\underbrace{\sum_{S\subseteq
V\setminus\left\{  t\right\}  }X\left(  X+\sum_{s\in S}x_{s}\right)
^{\left\vert S\right\vert -1}\left(  Y-\sum_{s\in S}x_{s}\right)
^{n-1-\left\vert S\right\vert }}_{\substack{=\left(  X+Y\right)
^{n-1}\\\text{(by Theorem \ref{thm.2}, applied to }V\setminus\left\{
t\right\}  \text{ and }n-1\text{ instead of }V\text{ and }n\text{)}}}x_{t}\\
&  =\left(  X+Y\right)  ^{n}-\underbrace{\sum_{t\in V}\left(  X+Y\right)
^{n-1}x_{t}}_{=\left(  X+Y\right)  ^{n-1}\sum_{t\in V}x_{t}}=\left(
X+Y\right)  ^{n}-\underbrace{\left(  X+Y\right)  ^{n-1}\sum_{t\in V}x_{t}%
}_{\substack{=\left(  \sum_{t\in V}x_{t}\right)  \left(  X+Y\right)
^{n-1}\\\text{(since }X+Y\text{ lies in the center of }\mathbb{L}\text{)}}}\\
&  =\left(  X+Y\right)  ^{n}-\underbrace{\left(  \sum_{t\in V}x_{t}\right)
}_{=\sum_{s\in V}x_{s}}\left(  X+Y\right)  ^{n-1}=\left(  X+Y\right)
^{n}-\left(  \sum_{s\in V}x_{s}\right)  \left(  X+Y\right)  ^{n-1}\\
&  =\left(  X+Y-\sum_{s\in V}x_{s}\right)  \left(  X+Y\right)  ^{n-1}.
\end{align*}
This proves Theorem \ref{thm.4}.
\end{proof}
\end{vershort}

\begin{verlong}
Now, we are going to prepare for the proof of Theorem \ref{thm.4}. Let us
first restate this theorem (or, more precisely, its case when $V$ is nonempty)
in a more convenient form:

\begin{lemma}
\label{lem.4}Let $V$ be a finite nonempty set. Let $n=\left\vert V\right\vert
$. For each $s\in V$, let $x_{s}$ be an element of $\mathbb{L}$. Let $X$ and
$Y$ be two elements of $\mathbb{L}$ such that $X+Y$ lies in the center of
$\mathbb{L}$. Then,%
\begin{align*}
&  Y^{n-1}\left(  Y-x\left(  V\right)  \right)  +X\left(  X+x\left(  V\right)
\right)  ^{n-1}\\
&  \ \ \ \ \ \ \ \ \ \ +\sum_{\substack{S\subseteq V;\\S\neq\varnothing
;\ S\neq V}}X\left(  X+x\left(  S\right)  \right)  ^{\left\vert S\right\vert
-1}\left(  Y-x\left(  S\right)  \right)  ^{n-\left\vert S\right\vert
-1}\left(  Y-x\left(  V\right)  \right) \\
&  =\left(  X+Y-x\left(  V\right)  \right)  \left(  X+Y\right)  ^{n-1}.
\end{align*}
Here, we are again using the notation from Definition \ref{def.x(S)}.
\end{lemma}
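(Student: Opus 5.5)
The plan is to reduce Lemma \ref{lem.4} to Theorem \ref{thm.2}, which is already available in the form (\ref{eq.lem12.2}) of Lemma \ref{lem.12}. We apply it once to $V$ itself and once to each $V\setminus\left\{ t\right\}$ with $t\in V$. Since $V$ is nonempty, we have $n\geq1$, so every exponent appearing in the lemma is a nonnegative integer. The three groups of addends on the left hand side are exactly the $S=\varnothing$, $S=V$ and "other $S$" addends of the sum in Theorem \ref{thm.4}.

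\textbf{Step 1: split the last factor.} For every $S\subseteq V$ with $S\neq V$, write
\[
Y-x\left(V\right)=\left(Y-x\left(S\right)\right)-\sum_{t\in V\setminus S}x_{t}.
\]
Then $\left(Y-x\left(S\right)\right)^{n-\left\vert S\right\vert -1}\left(Y-x\left(V\right)\right)$ becomes
\[
\left(Y-x\left(S\right)\right)^{n-\left\vert S\right\vert }-\sum_{t\in V\setminus S}\left(Y-x\left(S\right)\right)^{n-\left\vert S\right\vert -1}x_{t}.
\]
This applies in particular to the $S=\varnothing$ term $Y^{n-1}\left(Y-x\left(V\right)\right)$. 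The $S=V$ term $X\left(X+x\left(V\right)\right)^{n-1}$ equals $X\left(X+x\left(V\right)\right)^{\left\vert V\right\vert -1}\left(Y-x\left(V\right)\right)^{0}$, with no correction term, since $V\setminus V=\varnothing$.

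\textbf{Step 2: collect the main terms.} The first parts, together with the $S=V$ term, assemble into
\[
Y^{n}+\sum_{S\neq\varnothing}X\left(X+x\left(S\right)\right)^{\left\vert S\right\vert -1}\left(Y-x\left(S\right)\right)^{n-\left\vert S\right\vert },
\]
which equals $\left(X+Y\right)^{n}$ by (\ref{eq.lem12.2}).

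\textbf{Step 3: regroup the corrections.} In the correction terms, swap the summations $\sum_{S\neq V}\sum_{t\in V\setminus S}=\sum_{t\in V}\sum_{S\subseteq V\setminus\left\{ t\right\}}$. For fixed $t$, the inner sum (with the $S=\varnothing$ term $Y^{n-1}$ separated out) is
\[
\left(Y^{n-1}+\sum_{\varnothing\neq S\subseteq V\setminus\left\{ t\right\}}X\left(X+x\left(S\right)\right)^{\left\vert S\right\vert -1}\left(Y-x\left(S\right)\right)^{n-1-\left\vert S\right\vert }\right)x_{t}.
\]
The bracket equals $\left(X+Y\right)^{n-1}$ by (\ref{eq.lem12.2}) applied to $V\setminus\left\{ t\right\}$.

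\textbf{Step 4: conclude.} The left hand side is therefore
\[
\left(X+Y\right)^{n}-\sum_{t}\left(X+Y\right)^{n-1}x_{t}.
\]
Centrality of $X+Y$ lets us pull $\left(X+Y\right)^{n-1}$ to the right, so this equals $\left(X+Y-x\left(V\right)\right)\left(X+Y\right)^{n-1}$.

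The main obstacle is bookkeeping in Steps 1 and 3, specifically the boundary addends. The $S=\varnothing$ term carries no leading $X$ factor, and the $S=V$ term carries no $\left(Y-x\left(V\right)\right)$ factor. I must check that each falls correctly into both the main sum and the correction sum, and that the exponent $n-\left\vert S\right\vert -1$ is nonnegative whenever a correction term is present, which holds because $S\neq V$ there.
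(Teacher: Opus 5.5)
Your proposal is correct and follows essentially the same route as the paper. You split $Y-x(V)=(Y-x(S))-\sum_{t\in V\setminus S}x_t$, evaluate the main part by (\ref{eq.lem12.2}) for $V$ and the correction part by swapping sums and applying (\ref{eq.lem12.2}) to each $V\setminus\{t\}$, then use centrality of $X+Y$. The paper differs only cosmetically: it keeps $Y^{n-1}(Y-x(V))$ and $X(X+x(V))^{n-1}$ outside the split and solves for them at the end, whereas you fold the $S=\varnothing$ addend into the split, exactly as the paper's short proof of Theorem \ref{thm.4} does.
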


Notice that the claim of Lemma \ref{lem.4} (unlike the claim of Theorem
\ref{thm.4}) does not require any convention about how to interpret the term
$X\left(  X+\sum_{s\in S}x_{s}\right)  ^{\left\vert S\right\vert -1}$ when
$S=\varnothing$ (and any other such conventions), because all expressions
appearing in Lemma \ref{lem.4} are well-defined a-priori.

\begin{proof}
[Proof of Lemma \ref{lem.4}.]Let us first notice that all terms appearing in
Lemma \ref{lem.4} are well-defined\footnote{\textit{Proof.} This follows from
the following four observations:
\par
\begin{itemize}
\item The terms $Y^{n-1}$, $X\left(  X+x\left(  V\right)  \right)  ^{n-1}$ and
$\left(  X+Y\right)  ^{n-1}$ are well-defined. (\textit{Proof:} We know that
the set $V$ is nonempty. Hence, $\left\vert V\right\vert >0$. Thus,
$\left\vert V\right\vert \geq1$ (since $\left\vert V\right\vert $ is an
integer). Therefore, $n=\left\vert V\right\vert \geq1$, so that $n-1\geq0$ and
thus $n-1\in\mathbb{N}$. Hence, the terms $Y^{n-1}$, $X\left(  X+x\left(
V\right)  \right)  ^{n-1}$ and $\left(  X+Y\right)  ^{n-1}$ are well-defined.
Qed.)
\par
\item For every subset $S$ of $V$, the term $\left\vert S\right\vert $ is
well-defined. (\textit{Proof:} Let $S$ be a subset of $V$. Then, $S$ is a
subset of the finite set $V$, and therefore itself is finite. Hence, the term
$\left\vert S\right\vert $ is well-defined. Qed.)
\par
\item For every subset $S$ of $V$ satisfying $S\neq\varnothing$ and $S\neq V$,
the term $\left(  X+x\left(  S\right)  \right)  ^{\left\vert S\right\vert -1}$
is well-defined. (\textit{Proof:} Let $S$ be a subset of $V$ satisfying
$S\neq\varnothing$ and $S\neq V$. Then, $\left\vert S\right\vert >0$ (since
$S\neq\varnothing$) and thus $\left\vert S\right\vert \geq1$ (since
$\left\vert S\right\vert $ is an integer). In other words, $\left\vert
S\right\vert -1\in\mathbb{N}$. Hence, the term $\left(  X+x\left(  S\right)
\right)  ^{\left\vert S\right\vert -1}$ is well-defined. Qed.)
\par
\item For every subset $S$ of $V$ satisfying $S\neq\varnothing$ and $S\neq V$,
the term $\left(  Y-x\left(  S\right)  \right)  ^{n-\left\vert S\right\vert
-1}$ is well-defined. (\textit{Proof:} Let $S$ be a subset of $V$ satisfying
$S\neq\varnothing$ and $S\neq V$. Then, $S$ is a proper subset of $V$ (since
$S$ is a subset of $V$ satisfying $S\neq V$). It is well-known that if $B$ is
a finite set, and if $A$ is a proper subset of $B$, then $\left\vert
A\right\vert <\left\vert B\right\vert $. Applying this to $A=S$ and $B=V$, we
obtain $\left\vert S\right\vert <\left\vert V\right\vert =n$, so that
$n-\left\vert S\right\vert >0$. Since $n-\left\vert S\right\vert $ is an
integer, this entails that $n-\left\vert S\right\vert \geq1$. Hence,
$n-\left\vert S\right\vert -1\geq0$. In other words, $n-\left\vert
S\right\vert -1\in\mathbb{N}$. Hence, the term $\left(  Y-x\left(  S\right)
\right)  ^{n-\left\vert S\right\vert -1}$ is well-defined. Qed.)
\end{itemize}
}. Also, note that $n=\left\vert V\right\vert >0$ (since $V$ is nonempty), so
that $n\geq1$ and thus $n-1\in\mathbb{N}$. From $n=\left\vert V\right\vert $,
we also obtain $n-\left\vert V\right\vert =0$.

The definition of $x\left(  V\right)  $ yields%
\begin{equation}
x\left(  V\right)  =\sum_{s\in V}x_{s}=\sum_{t\in V}x_{t}
\label{pf.lem.4.x(V)}%
\end{equation}
(here, we have renamed the summation index $s$ as $t$).

For every subset $S$ of $V$, we have
\begin{equation}
Y-x\left(  V\right)  =\left(  Y-x\left(  S\right)  \right)  -\sum
_{\substack{t\in V;\\t\notin S}}x_{t} \label{pf.lem.4.1}%
\end{equation}
\footnote{\textit{Proof of (\ref{pf.lem.4.1}):} Let $S$ be a subset of $V$.
The definition of $x\left(  S\right)  $ yields $x\left(  S\right)  =\sum_{s\in
S}x_{s}=\sum_{t\in S}x_{t}$ (here, we have renamed the summation index $s$ as
$t$). But (\ref{pf.lem.4.x(V)}) yields%
\begin{align*}
x\left(  V\right)   &  =\sum_{t\in V}x_{t}=\underbrace{\sum_{\substack{t\in
V;\\t\in S}}}_{\substack{=\sum_{t\in S}\\\text{(since }S\text{ is a subset of
}V\text{)}}}x_{t}+\sum_{\substack{t\in V;\\t\notin S}}x_{t}\\
&  \ \ \ \ \ \ \ \ \ \ \ \ \ \ \ \ \ \ \ \ \left(  \text{since each }t\in
V\text{ satisfies either }t\in S\text{ or }t\notin S\text{ (but not
both)}\right) \\
&  =\underbrace{\sum_{t\in S}x_{t}}_{=x\left(  S\right)  }+\sum
_{\substack{t\in V;\\t\notin S}}x_{t}=x\left(  S\right)  +\sum_{\substack{t\in
V;\\t\notin S}}x_{t}.
\end{align*}
Thus,%
\[
Y-\underbrace{x\left(  V\right)  }_{=x\left(  S\right)  +\sum_{\substack{t\in
V;\\t\notin S}}x_{t}}=Y-\left(  x\left(  S\right)  +\sum_{\substack{t\in
V;\\t\notin S}}x_{t}\right)  =\left(  Y-x\left(  S\right)  \right)
-\sum_{\substack{t\in V;\\t\notin S}}x_{t}.
\]
This proves (\ref{pf.lem.4.1}).}.

Fix any $t\in V$. Every subset $S$ of $V$ satisfying $t\notin S$ must
automatically satisfy $S\neq V$\ \ \ \ \footnote{\textit{Proof.} Let $S$ be a
subset of $V$ satisfying $t\notin S$. If we had $S=V$, then we would have
$t\in V=S$, which would contradict $t\notin S$. Hence, we cannot have $S=V$.
Thus, we have $S\neq V$. Qed.}. Hence, for any subset $S$ of $V$, we have the
following logical equivalence:%
\[
\left(  t\notin S\text{ and }S\neq\varnothing\text{ and }S\neq V\right)
\ \Longleftrightarrow\ \left(  t\notin S\text{ and }S\neq\varnothing\right)
.
\]
Thus, we have the following equality of summation signs:%
\begin{equation}
\sum_{\substack{S\subseteq V;\\t\notin S;\\S\neq\varnothing;\ S\neq V}%
}=\sum_{\substack{S\subseteq V;\\t\notin S;\\S\neq\varnothing}}=\sum
_{\substack{S\subseteq V\setminus\left\{  t\right\}  ;\\S\neq\varnothing}}
\label{pf.lem.4.sumeq1}%
\end{equation}
(since the subsets $S$ of $V$ satisfying $t\notin S$ are precisely the subsets
of $V\setminus\left\{  t\right\}  $).

The set $V\setminus\left\{  t\right\}  $ is a subset of the finite set $V$,
and thus is itself finite. Moreover, from $t\in V$, we obtain $\left\vert
V\setminus\left\{  t\right\}  \right\vert =\underbrace{\left\vert V\right\vert
}_{=n}-\,1=n-1$. Hence, we can apply the equality (\ref{eq.lem12.2}) to
$V\setminus\left\{  t\right\}  $ and $n-1$ instead of $V$ and $n$. We thus
obtain%
\[
Y^{n-1}+\sum_{\substack{S\subseteq V\setminus\left\{  t\right\}
;\\S\neq\varnothing}}X\left(  X+x\left(  S\right)  \right)  ^{\left\vert
S\right\vert -1}\left(  Y-x\left(  S\right)  \right)  ^{n-1-\left\vert
S\right\vert }=\left(  X+Y\right)  ^{n-1}.
\]
Subtracting $Y^{n-1}$ from both sides of this equality, we obtain%
\begin{equation}
\sum_{\substack{S\subseteq V\setminus\left\{  t\right\}  ;\\S\neq\varnothing
}}X\left(  X+x\left(  S\right)  \right)  ^{\left\vert S\right\vert -1}\left(
Y-x\left(  S\right)  \right)  ^{n-1-\left\vert S\right\vert }=\left(
X+Y\right)  ^{n-1}-Y^{n-1}. \label{pf.lem.4.t-eq0}%
\end{equation}

Now,%
\begin{align}
&  \underbrace{\sum_{\substack{S\subseteq V;\\S\neq\varnothing;\ S\neq
V;\\t\notin S}}}_{\substack{=\sum_{\substack{S\subseteq V;\\t\notin
S;\\S\neq\varnothing;\ S\neq V}}=\sum_{\substack{S\subseteq V\setminus\left\{
t\right\}  ;\\S\neq\varnothing}}\\\text{(by (\ref{pf.lem.4.sumeq1}))}%
}}X\left(  X+x\left(  S\right)  \right)  ^{\left\vert S\right\vert
-1}\underbrace{\left(  Y-x\left(  S\right)  \right)  ^{n-\left\vert
S\right\vert -1}}_{\substack{=\left(  Y-x\left(  S\right)  \right)
^{n-1-\left\vert S\right\vert }\\\text{(since }n-\left\vert S\right\vert
-1=n-1-\left\vert S\right\vert \text{)}}}x_{t}\nonumber\\
&  =\sum_{\substack{S\subseteq V\setminus\left\{  t\right\}  ;\\S\neq
\varnothing}}X\left(  X+x\left(  S\right)  \right)  ^{\left\vert S\right\vert
-1}\left(  Y-x\left(  S\right)  \right)  ^{n-1-\left\vert S\right\vert }%
x_{t}\nonumber\\
&  =\underbrace{\left(  \sum_{\substack{S\subseteq V\setminus\left\{
t\right\}  ;\\S\neq\varnothing}}X\left(  X+x\left(  S\right)  \right)
^{\left\vert S\right\vert -1}\left(  Y-x\left(  S\right)  \right)
^{n-1-\left\vert S\right\vert }\right)  }_{\substack{=\left(  X+Y\right)
^{n-1}-Y^{n-1}\\\text{(by (\ref{pf.lem.4.t-eq0}))}}}x_{t}\nonumber\\
&  =\left(  \left(  X+Y\right)  ^{n-1}-Y^{n-1}\right)  x_{t}.
\label{pf.lem.4.t-eq1}%
\end{align}

Now, let us forget that we fixed $t$. We thus have proven the equality
(\ref{pf.lem.4.t-eq1}) for each $t\in V$.

Subtracting $Y^{n}$ from both sides of the equality (\ref{eq.lem12.2}), we
obtain%
\begin{equation}
\sum_{\substack{S\subseteq V;\\S\neq\varnothing}}X\left(  X+x\left(  S\right)
\right)  ^{\left\vert S\right\vert -1}\left(  Y-x\left(  S\right)  \right)
^{n-\left\vert S\right\vert }=\left(  X+Y\right)  ^{n}-Y^{n}.
\label{pf.lem.4.4}%
\end{equation}
But the set $V$ is nonempty. In other words, $V\neq\varnothing$. Hence, $V$ is
a subset of $V$ satisfying $V\neq\varnothing$. In other words, $V$ is a subset
$S$ of $V$ satisfying $S\neq\varnothing$. Hence, the sum $\sum
_{\substack{S\subseteq V;\\S\neq\varnothing}}X\left(  X+x\left(  S\right)
\right)  ^{\left\vert S\right\vert -1}\left(  Y-x\left(  S\right)  \right)
^{n-\left\vert S\right\vert }$ has an addend for $S=V$. If we split off this
addend from this sum, then we obtain%
\begin{align*}
&  \sum_{\substack{S\subseteq V;\\S\neq\varnothing}}X\left(  X+x\left(
S\right)  \right)  ^{\left\vert S\right\vert -1}\left(  Y-x\left(  S\right)
\right)  ^{n-\left\vert S\right\vert }\\
&  =X\underbrace{\left(  X+x\left(  V\right)  \right)  ^{\left\vert
V\right\vert -1}}_{\substack{=\left(  X+x\left(  V\right)  \right)
^{n-1}\\\text{(since }\left\vert V\right\vert =n\text{)}}}\underbrace{\left(
Y-x\left(  V\right)  \right)  ^{n-\left\vert V\right\vert }}%
_{\substack{=\left(  Y-x\left(  V\right)  \right)  ^{0}\\\text{(since
}n-\left\vert V\right\vert =0\text{)}}}\\
&  \ \ \ \ \ \ \ \ \ \ +\sum_{\substack{S\subseteq V;\\S\neq\varnothing
;\ S\neq V}}X\left(  X+x\left(  S\right)  \right)  ^{\left\vert S\right\vert
-1}\left(  Y-x\left(  S\right)  \right)  ^{n-\left\vert S\right\vert }\\
&  =X\left(  X+x\left(  V\right)  \right)  ^{n-1}\underbrace{\left(
Y-x\left(  V\right)  \right)  ^{0}}_{=1}+\sum_{\substack{S\subseteq
V;\\S\neq\varnothing;\ S\neq V}}X\left(  X+x\left(  S\right)  \right)
^{\left\vert S\right\vert -1}\left(  Y-x\left(  S\right)  \right)
^{n-\left\vert S\right\vert }\\
&  =X\left(  X+x\left(  V\right)  \right)  ^{n-1}+\sum_{\substack{S\subseteq
V;\\S\neq\varnothing;\ S\neq V}}X\left(  X+x\left(  S\right)  \right)
^{\left\vert S\right\vert -1}\left(  Y-x\left(  S\right)  \right)
^{n-\left\vert S\right\vert }.
\end{align*}
Subtracting $X\left(  X+x\left(  V\right)  \right)  ^{n-1}$ from this
equality, we obtain%
\begin{align*}
&  \sum_{\substack{S\subseteq V;\\S\neq\varnothing}}X\left(  X+x\left(
S\right)  \right)  ^{\left\vert S\right\vert -1}\left(  Y-x\left(  S\right)
\right)  ^{n-\left\vert S\right\vert }-X\left(  X+x\left(  V\right)  \right)
^{n-1}\\
&  =\sum_{\substack{S\subseteq V;\\S\neq\varnothing;\ S\neq V}}X\left(
X+x\left(  S\right)  \right)  ^{\left\vert S\right\vert -1}\left(  Y-x\left(
S\right)  \right)  ^{n-\left\vert S\right\vert }.
\end{align*}
Hence,%
\begin{align}
&  \sum_{\substack{S\subseteq V;\\S\neq\varnothing;\ S\neq V}}X\left(
X+x\left(  S\right)  \right)  ^{\left\vert S\right\vert -1}\left(  Y-x\left(
S\right)  \right)  ^{n-\left\vert S\right\vert }\nonumber\\
&  =\underbrace{\sum_{\substack{S\subseteq V;\\S\neq\varnothing}}X\left(
X+x\left(  S\right)  \right)  ^{\left\vert S\right\vert -1}\left(  Y-x\left(
S\right)  \right)  ^{n-\left\vert S\right\vert }}_{\substack{=\left(
X+Y\right)  ^{n}-Y^{n}\\\text{(by (\ref{pf.lem.4.4}))}}}-\,X\left(  X+x\left(
V\right)  \right)  ^{n-1}\nonumber\\
&  =\left(  X+Y\right)  ^{n}-Y^{n}-X\left(  X+x\left(  V\right)  \right)
^{n-1}. \label{pf.lem.4.6}%
\end{align}

Now,%
\begin{align*}
&  \sum_{\substack{S\subseteq V;\\S\neq\varnothing;\ S\neq V}}X\left(
X+x\left(  S\right)  \right)  ^{\left\vert S\right\vert -1}\left(  Y-x\left(
S\right)  \right)  ^{n-\left\vert S\right\vert -1}\underbrace{\left(
Y-x\left(  V\right)  \right)  }_{\substack{=\left(  Y-x\left(  S\right)
\right)  -\sum_{\substack{t\in V;\\t\notin S}}x_{t}\\\text{(by
(\ref{pf.lem.4.1}))}}}\\
&  =\sum_{\substack{S\subseteq V;\\S\neq\varnothing;\ S\neq V}%
}\underbrace{X\left(  X+x\left(  S\right)  \right)  ^{\left\vert S\right\vert
-1}\left(  Y-x\left(  S\right)  \right)  ^{n-\left\vert S\right\vert
-1}\left(  \left(  Y-x\left(  S\right)  \right)  -\sum_{\substack{t\in
V;\\t\notin S}}x_{t}\right)  }_{\substack{=X\left(  X+x\left(  S\right)
\right)  ^{\left\vert S\right\vert -1}\left(  Y-x\left(  S\right)  \right)
^{n-\left\vert S\right\vert -1}\left(  Y-x\left(  S\right)  \right)
\\-X\left(  X+x\left(  S\right)  \right)  ^{\left\vert S\right\vert -1}\left(
Y-x\left(  S\right)  \right)  ^{n-\left\vert S\right\vert -1}\sum
_{\substack{t\in V;\\t\notin S}}x_{t}}}\\
&  =\sum_{\substack{S\subseteq V;\\S\neq\varnothing;\ S\neq V}}\left(
X\left(  X+x\left(  S\right)  \right)  ^{\left\vert S\right\vert -1}\left(
Y-x\left(  S\right)  \right)  ^{n-\left\vert S\right\vert -1}\left(
Y-x\left(  S\right)  \right)  \right. \\
&  \ \ \ \ \ \ \ \ \ \ \ \ \ \ \ \ \ \ \ \ \left.  -X\left(  X+x\left(
S\right)  \right)  ^{\left\vert S\right\vert -1}\left(  Y-x\left(  S\right)
\right)  ^{n-\left\vert S\right\vert -1}\sum_{\substack{t\in V;\\t\notin
S}}x_{t}\right) \\
&  =\sum_{\substack{S\subseteq V;\\S\neq\varnothing;\ S\neq V}}X\left(
X+x\left(  S\right)  \right)  ^{\left\vert S\right\vert -1}\underbrace{\left(
Y-x\left(  S\right)  \right)  ^{n-\left\vert S\right\vert -1}\left(
Y-x\left(  S\right)  \right)  }_{=\left(  Y-x\left(  S\right)  \right)
^{n-\left\vert S\right\vert }}\\
&  \ \ \ \ \ \ \ \ \ \ -\sum_{\substack{S\subseteq V;\\S\neq\varnothing
;\ S\neq V}}\underbrace{X\left(  X+x\left(  S\right)  \right)  ^{\left\vert
S\right\vert -1}\left(  Y-x\left(  S\right)  \right)  ^{n-\left\vert
S\right\vert -1}\sum_{\substack{t\in V;\\t\notin S}}x_{t}}_{=\sum
_{\substack{t\in V;\\t\notin S}}X\left(  X+x\left(  S\right)  \right)
^{\left\vert S\right\vert -1}\left(  Y-x\left(  S\right)  \right)
^{n-\left\vert S\right\vert -1}x_{t}}%
\end{align*}%
\begin{align*}
&  =\underbrace{\sum_{\substack{S\subseteq V;\\S\neq\varnothing;\ S\neq
V}}X\left(  X+x\left(  S\right)  \right)  ^{\left\vert S\right\vert -1}\left(
Y-x\left(  S\right)  \right)  ^{n-\left\vert S\right\vert }}%
_{\substack{=\left(  X+Y\right)  ^{n}-Y^{n}-X\left(  X+x\left(  V\right)
\right)  ^{n-1}\\\text{(by (\ref{pf.lem.4.6}))}}}\\
&  \ \ \ \ \ \ \ \ \ \ -\underbrace{\sum_{\substack{S\subseteq V;\\S\neq
\varnothing;\ S\neq V}}\ \ \sum_{\substack{t\in V;\\t\notin S}}}_{=\sum_{t\in
V}\ \ \sum_{\substack{S\subseteq V;\\S\neq\varnothing;\ S\neq V;\\t\notin S}%
}}X\left(  X+x\left(  S\right)  \right)  ^{\left\vert S\right\vert -1}\left(
Y-x\left(  S\right)  \right)  ^{n-\left\vert S\right\vert -1}x_{t}\\
&  =\left(  X+Y\right)  ^{n}-Y^{n}-X\left(  X+x\left(  V\right)  \right)
^{n-1}\\
&  \ \ \ \ \ \ \ \ \ \ -\sum_{t\in V}\ \ \underbrace{\sum
_{\substack{S\subseteq V;\\S\neq\varnothing;\ S\neq V;\\t\notin S}}X\left(
X+x\left(  S\right)  \right)  ^{\left\vert S\right\vert -1}\left(  Y-x\left(
S\right)  \right)  ^{n-\left\vert S\right\vert -1}x_{t}}_{\substack{=\left(
\left(  X+Y\right)  ^{n-1}-Y^{n-1}\right)  x_{t}\\\text{(by
(\ref{pf.lem.4.t-eq1}))}}}\\
&  =\left(  X+Y\right)  ^{n}-Y^{n}-X\left(  X+x\left(  V\right)  \right)
^{n-1}-\underbrace{\sum_{t\in V}\left(  \left(  X+Y\right)  ^{n-1}%
-Y^{n-1}\right)  x_{t}}_{=\left(  \left(  X+Y\right)  ^{n-1}-Y^{n-1}\right)
\sum_{t\in V}x_{t}}\\
&  =\left(  X+Y\right)  ^{n}-Y^{n}-X\left(  X+x\left(  V\right)  \right)
^{n-1}-\left(  \left(  X+Y\right)  ^{n-1}-Y^{n-1}\right)  \underbrace{\sum
_{t\in V}x_{t}}_{\substack{=x\left(  V\right)  \\\text{(by
(\ref{pf.lem.4.x(V)}))}}}
\end{align*}%
\begin{align*}
&  =\underbrace{\left(  X+Y\right)  ^{n}}_{=\left(  X+Y\right)  ^{n-1}\left(
X+Y\right)  }-\underbrace{Y^{n}}_{=Y^{n-1}Y}-\,X\left(  X+x\left(  V\right)
\right)  ^{n-1}-\underbrace{\left(  \left(  X+Y\right)  ^{n-1}-Y^{n-1}\right)
x\left(  V\right)  }_{=\left(  X+Y\right)  ^{n-1}x\left(  V\right)
-Y^{n-1}x\left(  V\right)  }\\
&  =\left(  X+Y\right)  ^{n-1}\left(  X+Y\right)  -Y^{n-1}Y-X\left(
X+x\left(  V\right)  \right)  ^{n-1}-\left(  \left(  X+Y\right)
^{n-1}x\left(  V\right)  -Y^{n-1}x\left(  V\right)  \right) \\
&  =\left(  X+Y\right)  ^{n-1}\left(  X+Y\right)  -Y^{n-1}Y-X\left(
X+x\left(  V\right)  \right)  ^{n-1}-\left(  X+Y\right)  ^{n-1}x\left(
V\right)  +Y^{n-1}x\left(  V\right) \\
&  =\underbrace{\left(  \left(  X+Y\right)  ^{n-1}\left(  X+Y\right)  -\left(
X+Y\right)  ^{n-1}x\left(  V\right)  \right)  }_{=\left(  X+Y\right)
^{n-1}\left(  \left(  X+Y\right)  -x\left(  V\right)  \right)  }%
-\underbrace{\left(  Y^{n-1}Y-Y^{n-1}x\left(  V\right)  \right)  }%
_{=Y^{n-1}\left(  Y-x\left(  V\right)  \right)  }-\,X\left(  X+x\left(
V\right)  \right)  ^{n-1}\\
&  =\underbrace{\left(  X+Y\right)  ^{n-1}\left(  \left(  X+Y\right)
-x\left(  V\right)  \right)  }_{\substack{=\left(  \left(  X+Y\right)
-x\left(  V\right)  \right)  \left(  X+Y\right)  ^{n-1}\\\text{(since }\left(
X+Y\right)  ^{n-1}\text{ commutes with }\left(  X+Y\right)  -x\left(
V\right)  \\\text{(since }\left(  X+Y\right)  ^{n-1}\text{ lies in the center
of }\mathbb{L}\\\text{(since }X+Y\text{ lies in the center of }\mathbb{L}%
\text{,}\\\text{but the center of }\mathbb{L}\text{ is a subring of
}\mathbb{L}\text{)))}}}-\,Y^{n-1}\left(  Y-x\left(  V\right)  \right)
-X\left(  X+x\left(  V\right)  \right)  ^{n-1}\\
&  =\left(  \left(  X+Y\right)  -x\left(  V\right)  \right)  \left(
X+Y\right)  ^{n-1}-Y^{n-1}\left(  Y-x\left(  V\right)  \right)  -X\left(
X+x\left(  V\right)  \right)  ^{n-1}.
\end{align*}
Solving this equality for $\left(  \left(  X+Y\right)  -x\left(  V\right)
\right)  \left(  X+Y\right)  ^{n-1}$, we obtain%
\begin{align*}
&  \left(  \left(  X+Y\right)  -x\left(  V\right)  \right)  \left(
X+Y\right)  ^{n-1}\\
&  =\sum_{\substack{S\subseteq V;\\S\neq\varnothing;\ S\neq V}}X\left(
X+x\left(  S\right)  \right)  ^{\left\vert S\right\vert -1}\left(  Y-x\left(
S\right)  \right)  ^{n-\left\vert S\right\vert -1}\left(  Y-x\left(  V\right)
\right) \\
&  \ \ \ \ \ \ \ \ \ \ +Y^{n-1}\left(  Y-x\left(  V\right)  \right)  +X\left(
X+x\left(  V\right)  \right)  ^{n-1}\\
&  =Y^{n-1}\left(  Y-x\left(  V\right)  \right)  +X\left(  X+x\left(
V\right)  \right)  ^{n-1}\\
&  \ \ \ \ \ \ \ \ \ \ +\sum_{\substack{S\subseteq V;\\S\neq\varnothing
;\ S\neq V}}X\left(  X+x\left(  S\right)  \right)  ^{\left\vert S\right\vert
-1}\left(  Y-x\left(  S\right)  \right)  ^{n-\left\vert S\right\vert
-1}\left(  Y-x\left(  V\right)  \right)  .
\end{align*}
Hence,%
\begin{align*}
&  Y^{n-1}\left(  Y-x\left(  V\right)  \right)  +X\left(  X+x\left(  V\right)
\right)  ^{n-1}\\
&  \ \ \ \ \ \ \ \ \ \ +\sum_{\substack{S\subseteq V;\\S\neq\varnothing
;\ S\neq V}}X\left(  X+x\left(  S\right)  \right)  ^{\left\vert S\right\vert
-1}\left(  Y-x\left(  S\right)  \right)  ^{n-\left\vert S\right\vert
-1}\left(  Y-x\left(  V\right)  \right) \\
&  =\underbrace{\left(  \left(  X+Y\right)  -x\left(  V\right)  \right)
}_{=X+Y-x\left(  V\right)  }\left(  X+Y\right)  ^{n-1}\\
&  =\left(  X+Y-x\left(  V\right)  \right)  \left(  X+Y\right)  ^{n-1}.
\end{align*}
This proves Lemma \ref{lem.4}.
\end{proof}

\begin{proof}
[Proof of Theorem \ref{thm.4}.]Theorem \ref{thm.4} holds in the case when
$V=\varnothing$\ \ \ \ \footnote{\textit{Proof.} Assume that $V=\varnothing$.
We must prove that Theorem \ref{thm.4} holds.
\par
We have $V=\varnothing$. Hence, $\left\vert V\right\vert =\left\vert
\varnothing\right\vert =0$. Thus, $n=\left\vert V\right\vert =0$.
\par
But $V$ is the empty set (since $V=\varnothing$). Hence, the only subset $S$
of $V$ is the empty set $\varnothing$. Thus, the sum $\sum_{S\subseteq
V}X\left(  X+\sum_{s\in S}x_{s}\right)  ^{\left\vert S\right\vert -1}\left(
Y-\sum_{s\in S}x_{s}\right)  ^{n-\left\vert S\right\vert -1}\left(
Y-\sum_{s\in V}x_{s}\right)  $ has only one addend: namely, the addend for
$S=\varnothing$. Therefore, this sum simplifies as follows:%
\begin{align*}
&  \sum_{S\subseteq V}X\left(  X+\sum_{s\in S}x_{s}\right)  ^{\left\vert
S\right\vert -1}\left(  Y-\sum_{s\in S}x_{s}\right)  ^{n-\left\vert
S\right\vert -1}\left(  Y-\sum_{s\in V}x_{s}\right) \\
&  =\underbrace{X\left(  X+\sum_{s\in\varnothing}x_{s}\right)  ^{\left\vert
\varnothing\right\vert -1}}_{\substack{=1\\\text{(according to our convention
for}\\\text{interpreting }X\left(  X+\sum_{s\in S}x_{s}\right)  ^{\left\vert
S\right\vert -1}\\\text{when }S=\varnothing\text{)}}}\underbrace{\left(
Y-\sum_{s\in\varnothing}x_{s}\right)  ^{n-\left\vert \varnothing\right\vert
-1}\left(  Y-\sum_{s\in V}x_{s}\right)  }_{\substack{=1\\\text{(according to
our convention for}\\\text{interpreting }\left(  Y-\sum_{s\in S}x_{s}\right)
^{n-\left\vert S\right\vert -1}\left(  Y-\sum_{s\in V}x_{s}\right)
\\\text{when }\left\vert S\right\vert =n\text{)}}}\\
&  =1.
\end{align*}
Comparing this with%
\[
\left(  X+Y-\sum_{s\in V}x_{s}\right)  \left(  X+Y\right)  ^{n-1}%
=1\ \ \ \ \ \ \ \ \ \ \left(
\begin{array}
[c]{c}%
\text{according to our convention for}\\
\text{interpreting }\left(  X+Y-\sum_{s\in V}x_{s}\right)  \left(  X+Y\right)
^{n-1}\\
\text{when }n=0
\end{array}
\right)  ,
\]
we obtain%
\begin{align*}
&  \sum_{S\subseteq V}X\left(  X+\sum_{s\in S}x_{s}\right)  ^{\left\vert
S\right\vert -1}\left(  Y-\sum_{s\in S}x_{s}\right)  ^{n-\left\vert
S\right\vert -1}\left(  Y-\sum_{s\in V}x_{s}\right) \\
&  =\left(  X+Y-\sum_{s\in V}x_{s}\right)  \left(  X+Y\right)  ^{n-1}.
\end{align*}
In other words, Theorem \ref{thm.4} holds. Qed.}. Hence, for the rest of this
proof, we can WLOG assume that we don't have $V=\varnothing$. Assume this.

We have $V\neq\varnothing$ (since we don't have $V=\varnothing$). Hence, the
set $V$ is nonempty. Thus, Lemma \ref{lem.4} yields%
\begin{align}
&  Y^{n-1}\left(  Y-x\left(  V\right)  \right)  +X\left(  X+x\left(  V\right)
\right)  ^{n-1}\nonumber\\
&  \ \ \ \ \ \ \ \ \ \ +\sum_{\substack{S\subseteq V;\\S\neq\varnothing
;\ S\neq V}}X\left(  X+x\left(  S\right)  \right)  ^{\left\vert S\right\vert
-1}\left(  Y-x\left(  S\right)  \right)  ^{n-\left\vert S\right\vert
-1}\left(  Y-x\left(  V\right)  \right) \nonumber\\
&  =\left(  X+Y-x\left(  V\right)  \right)  \left(  X+Y\right)  ^{n-1}.
\label{pf.thm.4.1}%
\end{align}

We have $n-\underbrace{\left\vert \varnothing\right\vert }_{=0}-\,1=n-0-1=n-1$.

Every subset $S$ of $V$ satisfies%
\begin{equation}
x\left(  S\right)  =\sum_{s\in S}x_{s} \label{pf.thm.4.2}%
\end{equation}
(by the definition of $x\left(  S\right)  $). Applying this to $S=V$, we
obtain%
\begin{equation}
x\left(  V\right)  =\sum_{s\in V}x_{s}. \label{pf.thm.4.3}%
\end{equation}

But the set $V$ is a subset $S$ of $V$ satisfying $S\neq\varnothing$ (since
$V$ is a subset of $V$ and since $V\neq\varnothing$). Hence, the sum
$\sum_{\substack{S\subseteq V;\\S\neq\varnothing}}X\left(  X+\sum_{s\in
S}x_{s}\right)  ^{\left\vert S\right\vert -1}\left(  Y-\sum_{s\in S}%
x_{s}\right)  ^{n-\left\vert S\right\vert -1}\left(  Y-\sum_{s\in V}%
x_{s}\right)  $ has an addend for $S=V$. If we split off this addend from the
sum, then we obtain%
\begin{align}
&  \sum_{\substack{S\subseteq V;\\S\neq\varnothing}}X\left(  X+\sum_{s\in
S}x_{s}\right)  ^{\left\vert S\right\vert -1}\left(  Y-\sum_{s\in S}%
x_{s}\right)  ^{n-\left\vert S\right\vert -1}\left(  Y-\sum_{s\in V}%
x_{s}\right) \nonumber\\
&  =X\underbrace{\left(  X+\sum_{s\in V}x_{s}\right)  ^{\left\vert
V\right\vert -1}}_{\substack{=\left(  X+\sum_{s\in V}x_{s}\right)
^{n-1}\\\text{(since }\left\vert V\right\vert =n\text{)}}}\underbrace{\left(
Y-\sum_{s\in V}x_{s}\right)  ^{n-\left\vert V\right\vert -1}\left(
Y-\sum_{s\in V}x_{s}\right)  }_{\substack{=1\\\text{(according to our
convention for}\\\text{interpreting }\left(  Y-\sum_{s\in S}x_{s}\right)
^{n-\left\vert S\right\vert -1}\left(  Y-\sum_{s\in V}x_{s}\right)
\\\text{when }\left\vert S\right\vert =n\text{)}}}\nonumber\\
&  \ \ \ \ \ \ \ \ \ \ +\sum_{\substack{S\subseteq V;\\S\neq\varnothing
;\ S\neq V}}X\left(  X+\underbrace{\sum_{s\in S}x_{s}}_{\substack{=x\left(
S\right)  \\\text{(by (\ref{pf.thm.4.2}))}}}\right)  ^{\left\vert S\right\vert
-1}\left(  Y-\underbrace{\sum_{s\in S}x_{s}}_{\substack{=x\left(  S\right)
\\\text{(by (\ref{pf.thm.4.2}))}}}\right)  ^{n-\left\vert S\right\vert
-1}\left(  Y-\underbrace{\sum_{s\in V}x_{s}}_{\substack{=x\left(  V\right)
\\\text{(by (\ref{pf.thm.4.3}))}}}\right) \nonumber\\
&  =X\left(  X+\underbrace{\sum_{s\in V}x_{s}}_{\substack{=x\left(  V\right)
\\\text{(by (\ref{pf.thm.4.3}))}}}\right)  ^{n-1}+\sum_{\substack{S\subseteq
V;\\S\neq\varnothing;\ S\neq V}}X\left(  X+x\left(  S\right)  \right)
^{\left\vert S\right\vert -1}\left(  Y-x\left(  S\right)  \right)
^{n-\left\vert S\right\vert -1}\left(  Y-x\left(  V\right)  \right)
\nonumber\\
&  =X\left(  X+x\left(  V\right)  \right)  ^{n-1}+\sum_{\substack{S\subseteq
V;\\S\neq\varnothing;\ S\neq V}}X\left(  X+x\left(  S\right)  \right)
^{\left\vert S\right\vert -1}\left(  Y-x\left(  S\right)  \right)
^{n-\left\vert S\right\vert -1}\left(  Y-x\left(  V\right)  \right)  .
\label{pf.thm.4.8}%
\end{align}
Now,%
\begin{align*}
&  \sum_{S\subseteq V}X\left(  X+\sum_{s\in S}x_{s}\right)  ^{\left\vert
S\right\vert -1}\left(  Y-\sum_{s\in S}x_{s}\right)  ^{n-\left\vert
S\right\vert -1}\left(  Y-\sum_{s\in V}x_{s}\right) \\
&  =\underbrace{X\left(  X+\sum_{s\in\varnothing}x_{s}\right)  ^{\left\vert
\varnothing\right\vert -1}}_{\substack{=1\\\text{(according to our convention
for}\\\text{interpreting }X\left(  X+\sum_{s\in S}x_{s}\right)  ^{\left\vert
S\right\vert -1}\\\text{when }S=\varnothing\text{)}}}\left(
Y-\underbrace{\sum_{s\in\varnothing}x_{s}}_{=\left(  \text{empty sum}\right)
=0}\right)  ^{n-\left\vert \varnothing\right\vert -1}\left(  Y-\sum_{s\in
V}x_{s}\right) \\
&  \ \ \ \ \ \ \ \ \ \ +\underbrace{\sum_{\substack{S\subseteq V;\\S\neq
\varnothing}}X\left(  X+\sum_{s\in S}x_{s}\right)  ^{\left\vert S\right\vert
-1}\left(  Y-\sum_{s\in S}x_{s}\right)  ^{n-\left\vert S\right\vert -1}\left(
Y-\sum_{s\in V}x_{s}\right)  }_{\substack{=X\left(  X+x\left(  V\right)
\right)  ^{n-1}+\sum_{\substack{S\subseteq V;\\S\neq\varnothing;\ S\neq
V}}X\left(  X+x\left(  S\right)  \right)  ^{\left\vert S\right\vert -1}\left(
Y-x\left(  S\right)  \right)  ^{n-\left\vert S\right\vert -1}\left(
Y-x\left(  V\right)  \right)  \\\text{(by (\ref{pf.thm.4.8}))}}}\\
&  \ \ \ \ \ \ \ \ \ \ \ \ \ \ \ \ \ \ \ \ \left(
\begin{array}
[c]{c}%
\text{here, we have split off the addend for }S=\varnothing\text{ from the
sum}\\
\text{(since }\varnothing\text{ is a subset of }V\text{)}%
\end{array}
\right) \\
&  =\underbrace{\left(  Y-0\right)  ^{n-\left\vert \varnothing\right\vert -1}%
}_{\substack{=Y^{n-1}\\\text{(since }Y-0=Y\text{ and }n-\left\vert
\varnothing\right\vert -1=n-1\text{)}}}\left(  Y-\underbrace{\sum_{s\in
V}x_{s}}_{\substack{=x\left(  V\right)  \\\text{(by (\ref{pf.thm.4.3}))}%
}}\right) \\
&  \ \ \ \ \ \ \ \ \ \ +X\left(  X+x\left(  V\right)  \right)  ^{n-1}%
+\sum_{\substack{S\subseteq V;\\S\neq\varnothing;\ S\neq V}}X\left(
X+x\left(  S\right)  \right)  ^{\left\vert S\right\vert -1}\left(  Y-x\left(
S\right)  \right)  ^{n-\left\vert S\right\vert -1}\left(  Y-x\left(  V\right)
\right) \\
&  =Y^{n-1}\left(  Y-x\left(  V\right)  \right)  +X\left(  X+x\left(
V\right)  \right)  ^{n-1}\\
&  \ \ \ \ \ \ \ \ \ \ +\sum_{\substack{S\subseteq V;\\S\neq\varnothing
;\ S\neq V}}X\left(  X+x\left(  S\right)  \right)  ^{\left\vert S\right\vert
-1}\left(  Y-x\left(  S\right)  \right)  ^{n-\left\vert S\right\vert
-1}\left(  Y-x\left(  V\right)  \right) \\
&  =\left(  X+Y-\underbrace{x\left(  V\right)  }_{\substack{=\sum_{s\in
V}x_{s}\\\text{(by (\ref{pf.thm.4.3}))}}}\right)  \left(  X+Y\right)
^{n-1}\ \ \ \ \ \ \ \ \ \ \left(  \text{by (\ref{pf.thm.4.1})}\right) \\
&  =\left(  X+Y-\sum_{s\in V}x_{s}\right)  \left(  X+Y\right)  ^{n-1}.
\end{align*}
This proves Theorem \ref{thm.4}.
\end{proof}
\end{verlong}

\begin{vershort}
\begin{proof}
[Proof of Theorem \ref{thm.5}.]Apply Theorem \ref{thm.4} to $Y+\sum_{s\in
V}x_{s}$ instead of $Y$.
\end{proof}
\end{vershort}

\begin{verlong}
\begin{proof}
[Proof of Theorem \ref{thm.5}.]We know that $X+Y+\sum_{s\in V}x_{s}$ lies in
the center of $\mathbb{L}$. In other words, $X+\left(  Y+\sum_{s\in V}%
x_{s}\right)  $ lies in the center of $\mathbb{L}$ (since $X+\left(
Y+\sum_{s\in V}x_{s}\right)  =X+Y+\sum_{s\in V}x_{s}$). Thus, Theorem
\ref{thm.4} (applied to $Y+\sum_{s\in V}x_{s}$ instead of $Y$) yields%
\begin{align}
&  \sum_{S\subseteq V}X\left(  X+\sum_{s\in S}x_{s}\right)  ^{\left\vert
S\right\vert -1}\left(  Y+\sum_{s\in V}x_{s}-\sum_{s\in S}x_{s}\right)
^{n-\left\vert S\right\vert -1}\left(  Y+\sum_{s\in V}x_{s}-\sum_{s\in V}%
x_{s}\right) \nonumber\\
&  =\left(  X+\underbrace{Y+\sum_{s\in V}x_{s}-\sum_{s\in V}x_{s}}%
_{=Y}\right)  \left(  X+Y+\sum_{s\in V}x_{s}\right)  ^{n-1}\nonumber\\
&  =\left(  X+Y\right)  \left(  X+Y+\sum_{s\in V}x_{s}\right)  ^{n-1}.
\label{pf.thm.5.1}%
\end{align}

But every subset $S$ of $V$ satisfies%
\begin{equation}
\sum_{s\in V}x_{s}-\sum_{s\in S}x_{s}=\sum_{s\in V\setminus S}x_{s}
\label{pf.thm.5.3}%
\end{equation}
\footnote{\textit{Proof of (\ref{pf.thm.5.3}):} Let $S$ be a subset of $V$.
Then, each element $s\in V$ satisfies either $s\in S$ or $s\notin S$ (but not
both). Hence,%
\[
\sum_{s\in V}x_{s}=\underbrace{\sum_{\substack{s\in V;\\s\in S}}}%
_{\substack{=\sum_{s\in S}\\\text{(since }S\text{ is a subset of }V\text{)}%
}}x_{s}+\underbrace{\sum_{\substack{s\in V;\\s\notin S}}}_{=\sum_{s\in
V\setminus S}}x_{s}=\sum_{s\in S}x_{s}+\sum_{s\in V\setminus S}x_{s}.
\]
Hence,%
\[
\underbrace{\sum_{s\in V}x_{s}}_{=\sum_{s\in S}x_{s}+\sum_{s\in V\setminus
S}x_{s}}-\sum_{s\in S}x_{s}=\sum_{s\in S}x_{s}+\sum_{s\in V\setminus S}%
x_{s}-\sum_{s\in S}x_{s}=\sum_{s\in V\setminus S}x_{s}.
\]
This proves (\ref{pf.thm.5.3}).}. Now,%
\begin{align*}
&  \sum_{S\subseteq V}X\left(  X+\sum_{s\in S}x_{s}\right)  ^{\left\vert
S\right\vert -1}\left(  Y+\underbrace{\sum_{s\in V}x_{s}-\sum_{s\in S}x_{s}%
}_{\substack{=\sum_{s\in V\setminus S}x_{s}\\\text{(by (\ref{pf.thm.5.3}))}%
}}\right)  ^{n-\left\vert S\right\vert -1}\underbrace{\left(  Y+\sum_{s\in
V}x_{s}-\sum_{s\in V}x_{s}\right)  }_{=Y}\\
&  =\sum_{S\subseteq V}X\left(  X+\sum_{s\in S}x_{s}\right)  ^{\left\vert
S\right\vert -1}\left(  Y+\sum_{s\in V\setminus S}x_{s}\right)  ^{n-\left\vert
S\right\vert -1}Y.
\end{align*}
Comparing this with (\ref{pf.thm.5.1}), we obtain%
\begin{align*}
&  \sum_{S\subseteq V}X\left(  X+\sum_{s\in S}x_{s}\right)  ^{\left\vert
S\right\vert -1}\left(  Y+\sum_{s\in V\setminus S}x_{s}\right)  ^{n-\left\vert
S\right\vert -1}Y\\
&  =\left(  X+Y\right)  \left(  X+Y+\sum_{s\in V}x_{s}\right)  ^{n-1}.
\end{align*}
This proves Theorem \ref{thm.5}.
\end{proof}
\end{verlong}

\section{Applications}

\subsection{Polarization identities}

Let us show how a rather classical identity in noncommutative rings follows as
a particular case from Theorem \ref{thm.1}. Namely, we shall prove the
following \textit{polarization identity}:

\begin{corollary}
\label{cor.polar1}Let $V$ be a finite set. Let $n=\left\vert V\right\vert $.
For each $s\in V$, let $x_{s}$ be an element of $\mathbb{L}$. Let
$X\in\mathbb{L}$. Then,%
\[
\sum_{S\subseteq V}\left(  -1\right)  ^{n-\left\vert S\right\vert }\left(
X+\sum_{s\in S}x_{s}\right)  ^{n}=\sum_{\substack{\left(  i_{1},i_{2}%
,\ldots,i_{n}\right)  \text{ is a list}\\\text{of all elements of
}V\\\text{(with no repetitions)}}}x_{i_{1}}x_{i_{2}}\cdots x_{i_{n}}.
\]

\end{corollary}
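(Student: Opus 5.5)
The plan is to specialize Theorem \ref{thm.1} by choosing $Y$ so that $X+Y=0$, which is trivially central. Concretely, apply Theorem \ref{thm.1} with $Y=-X$. The right-hand side then contains the factor $\left(X+Y\right)^{n-k}=0^{n-k}$, which vanishes unless $k=n$. Since $0^{0}=1$, only the $k$-tuples with $k=n$ survive. Distinct $n$-tuples of elements of an $n$-element set $V$ are exactly the lists of all elements of $V$ without repetitions, so the right-hand side becomes $\sum x_{i_{1}}x_{i_{2}}\cdots x_{i_{n}}$ over all such lists, matching the desired right-hand side.

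On the left-hand side, each addend is
\[
\left(X+\sum_{s\in S}x_{s}\right)^{\left\vert S\right\vert }\left(-X-\sum_{s\in S}x_{s}\right)^{n-\left\vert S\right\vert }.
\]
Write $A=X+\sum_{s\in S}x_{s}$. The second factor is $\left(-A\right)^{n-\left\vert S\right\vert }=\left(-1\right)^{n-\left\vert S\right\vert }A^{n-\left\vert S\right\vert }$, since $-1$ is central. Powers of the same element $A$ commute, so the addend equals $\left(-1\right)^{n-\left\vert S\right\vert }A^{n}$. Summing over $S$ gives exactly the left-hand side of Corollary \ref{cor.polar1}.

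There is no real obstacle here. The only points needing care are the convention $0^{0}=1$ for the $k=n$ term, and the observation that no centrality hypothesis on $X$ is needed because $X+Y=0$ is always central. So the corollary follows directly from Theorem \ref{thm.1}.
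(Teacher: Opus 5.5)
Your proposal is correct and matches the paper's proof: it applies Theorem \ref{thm.1} with $Y=-X$, so that $X+Y=0$ is trivially central. The factors $0^{n-k}$ then kill every right-hand addend except those with $k=n$, and the left side is rewritten using $\left(-A\right)^{n-\left\vert S\right\vert }=\left(-1\right)^{n-\left\vert S\right\vert }A^{n-\left\vert S\right\vert }$ and $A^{\left\vert S\right\vert }A^{n-\left\vert S\right\vert }=A^{n}$.
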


\begin{vershort}
\begin{proof}
[Proof of Corollary \ref{cor.polar1}.]Apply Theorem \ref{thm.1} to $Y=-X$, and
notice how all addends on the right hand side having $k<n$ vanish (since
$\left(  X+\left(  -X\right)  \right)  ^{n-k}=0$ for $k<n$), whereas the
remaining addends are precisely the addends of the sum \newline$\sum
_{\substack{\left(  i_{1},i_{2},\ldots,i_{n}\right)  \text{ is a
list}\\\text{of all elements of }V\\\text{(with no repetitions)}}}x_{i_{1}%
}x_{i_{2}}\cdots x_{i_{n}}$.
\end{proof}
\end{vershort}

\begin{verlong}
\begin{proof}
[Proof of Corollary \ref{cor.polar1}.]If $\left(  i_{1},i_{2},\ldots
,i_{k}\right)  $ is a $k$-tuple of distinct elements of $V$ for some
$k\in\mathbb{N}$, then the statement $\left(  n\leq k\right)  $ is equivalent
to the statement $\left(  k=n\right)  $\ \ \ \ \footnote{\textit{Proof.} Let
$\left(  i_{1},i_{2},\ldots,i_{k}\right)  $ be a $k$-tuple of distinct
elements of $V$ for some $k\in\mathbb{N}$. We must prove that the statement
$\left(  n\leq k\right)  $ is equivalent to the statement $\left(  k=n\right)
$.
\par
The elements $i_{1},i_{2},\ldots,i_{k}$ are distinct (since $\left(
i_{1},i_{2},\ldots,i_{k}\right)  $ is a $k$-tuple of distinct elements of
$V$). Hence, $\left\vert \left\{  i_{1},i_{2},\ldots,i_{k}\right\}
\right\vert =k$.
\par
But $\left(  i_{1},i_{2},\ldots,i_{k}\right)  $ is a $k$-tuple of elements of
$V$. Hence, $i_{1},i_{2},\ldots,i_{k}$ are elements of $V$. In other words,
$\left\{  i_{1},i_{2},\ldots,i_{k}\right\}  \subseteq V$. Hence, $\left\vert
\left\{  i_{1},i_{2},\ldots,i_{k}\right\}  \right\vert \leq\left\vert
V\right\vert =n$. Thus, $n\geq\left\vert \left\{  i_{1},i_{2},\ldots
,i_{k}\right\}  \right\vert =k$. Hence, if $n\leq k$, then $n=k$ (because
combining $n\leq k$ with $n\geq k$ yields $n=k$). In other words, the
implication $\left(  n\leq k\right)  \Longrightarrow\left(  n=k\right)  $
holds. Conversely, the implication $\left(  n=k\right)  \Longrightarrow\left(
n\leq k\right)  $ holds (obviously). Combining this implication with the
implication $\left(  n\leq k\right)  \Longrightarrow\left(  n=k\right)  $, we
obtain the equivalence $\left(  n\leq k\right)  \Longleftrightarrow\left(
n=k\right)  $. Hence, we obtain the following chain of equivalences: $\left(
n\leq k\right)  \Longleftrightarrow\left(  n=k\right)  \Longleftrightarrow
\left(  k=n\right)  $. Thus, the statement $\left(  n\leq k\right)  $ is
equivalent to the statement $\left(  k=n\right)  $. Qed.}. Hence, we have the
following equality of summation signs:%
\begin{equation}
\sum_{\substack{i_{1},i_{2},\ldots,i_{k}\text{ are}\\\text{distinct}%
\\\text{elements of }V;\\n\leq k}}=\sum_{\substack{i_{1},i_{2},\ldots
,i_{k}\text{ are}\\\text{distinct}\\\text{elements of }V;\\k=n}}.
\label{pf.cor.polar1.sumeq1}%
\end{equation}

Furthermore, let us define a set $\mathfrak{A}$ by%
\[
\mathfrak{A}=\left\{  \left(  i_{1},i_{2},\ldots,i_{n}\right)  \ \mid\ \left(
i_{1},i_{2},\ldots,i_{n}\right)  \text{ is a list of all elements of }V\text{
(with no repetitions)}\right\}  .
\]
Let us also define a set $\mathfrak{B}$ by%
\[
\mathfrak{B}=\left\{  \left(  i_{1},i_{2},\ldots,i_{n}\right)  \ \mid\ \left(
i_{1},i_{2},\ldots,i_{n}\right)  \text{ is a list of distinct elements of
}V\right\}  .
\]
Then, $\mathfrak{A}\subseteq\mathfrak{B}$\ \ \ \ \footnote{\textit{Proof.} Let
$j\in\mathfrak{A}$. Thus,
\begin{align*}
j  &  \in\mathfrak{A}=\left\{  \left(  i_{1},i_{2},\ldots,i_{n}\right)
\ \mid\ \left(  i_{1},i_{2},\ldots,i_{n}\right)  \text{ is a list of all
elements of }V\text{ (with no repetitions)}\right\} \\
&  =\left\{  \left(  j_{1},j_{2},\ldots,j_{n}\right)  \ \mid\ \left(
j_{1},j_{2},\ldots,j_{n}\right)  \text{ is a list of all elements of }V\text{
(with no repetitions)}\right\}
\end{align*}
(here, we have renamed the index $\left(  i_{1},i_{2},\ldots,i_{n}\right)  $
as $\left(  j_{1},j_{2},\ldots,j_{n}\right)  $). In other words, $j$ can be
written in the form $j=\left(  j_{1},j_{2},\ldots,j_{n}\right)  $, where
$\left(  j_{1},j_{2},\ldots,j_{n}\right)  $ is a list of all elements of $V$
(with no repetitions). Consider this $\left(  j_{1},j_{2},\ldots,j_{n}\right)
$.
\par
The list $\left(  j_{1},j_{2},\ldots,j_{n}\right)  $ is a list of elements of
$V$, and its entries are distinct (since $\left(  j_{1},j_{2},\ldots
,j_{n}\right)  $ is a list with no repetitions). In other words, $\left(
j_{1},j_{2},\ldots,j_{n}\right)  $ is a list of distinct elements of $V$.
Thus, $\left(  j_{1},j_{2},\ldots,j_{n}\right)  $ has the form $\left(
j_{1},j_{2},\ldots,j_{n}\right)  =\left(  i_{1},i_{2},\ldots,i_{n}\right)  $,
where $\left(  i_{1},i_{2},\ldots,i_{n}\right)  $ is a list of distinct
elements of $V$ (namely, $\left(  i_{1},i_{2},\ldots,i_{n}\right)  =\left(
j_{1},j_{2},\ldots,j_{n}\right)  $). In other words,%
\[
\left(  j_{1},j_{2},\ldots,j_{n}\right)  \in\left\{  \left(  i_{1}%
,i_{2},\ldots,i_{n}\right)  \ \mid\ \left(  i_{1},i_{2},\ldots,i_{n}\right)
\text{ is a list of distinct elements of }V\right\}  .
\]
In light of $j=\left(  j_{1},j_{2},\ldots,j_{n}\right)  $ and $\mathfrak{B}%
=\left\{  \left(  i_{1},i_{2},\ldots,i_{n}\right)  \ \mid\ \left(  i_{1}%
,i_{2},\ldots,i_{n}\right)  \text{ is a list of distinct elements of
}V\right\}  $, this rewrites as $j\in\mathfrak{B}$.
\par
Now, forget that we fixed $j$. We thus have shown that $j\in\mathfrak{B}$ for
each $j\in\mathfrak{A}$. In other words, $\mathfrak{A}\subseteq\mathfrak{B}$.}
and $\mathfrak{B}\subseteq\mathfrak{A}$\ \ \ \ \footnote{\textit{Proof.} Let
$j\in\mathfrak{B}$. Thus,
\begin{align*}
j  &  \in\mathfrak{B}=\left\{  \left(  i_{1},i_{2},\ldots,i_{n}\right)
\ \mid\ \left(  i_{1},i_{2},\ldots,i_{n}\right)  \text{ is a list of distinct
elements of }V\right\} \\
&  =\left\{  \left(  j_{1},j_{2},\ldots,j_{n}\right)  \ \mid\ \left(
j_{1},j_{2},\ldots,j_{n}\right)  \text{ is a list of distinct elements of
}V\right\}
\end{align*}
(here, we have renamed the index $\left(  i_{1},i_{2},\ldots,i_{n}\right)  $
as $\left(  j_{1},j_{2},\ldots,j_{n}\right)  $). In other words, $j$ can be
written in the form $j=\left(  j_{1},j_{2},\ldots,j_{n}\right)  $, where
$\left(  j_{1},j_{2},\ldots,j_{n}\right)  $ is a list of distinct elements of
$V$. Consider this $\left(  j_{1},j_{2},\ldots,j_{n}\right)  $.
\par
The $n$ elements $j_{1},j_{2},\ldots,j_{n}$ are distinct (since $\left(
j_{1},j_{2},\ldots,j_{n}\right)  $ is a list of distinct elements of $V$).
Thus, $\left\vert \left\{  j_{1},j_{2},\ldots,j_{n}\right\}  \right\vert =n$.
But $j_{1},j_{2},\ldots,j_{n}$ are elements of $V$ (since $\left(  j_{1}%
,j_{2},\ldots,j_{n}\right)  $ is a list of elements of $V$). Hence, $\left\{
j_{1},j_{2},\ldots,j_{n}\right\}  $ is a subset of $V$.
\par
It is well-known that if $B$ is a finite set, and if $A$ is a subset of $B$
satisfying $\left\vert A\right\vert =\left\vert B\right\vert $, then $A=B$.
Applying this to $B=V$ and $A=\left\{  j_{1},j_{2},\ldots,j_{n}\right\}  $, we
obtain $\left\{  j_{1},j_{2},\ldots,j_{n}\right\}  =V$ (since $\left\vert
\left\{  j_{1},j_{2},\ldots,j_{n}\right\}  \right\vert =n=\left\vert
V\right\vert $). Thus, the list $\left(  j_{1},j_{2},\ldots,j_{n}\right)  $ is
a list of all elements of $V$. Furthermore, this list $\left(  j_{1}%
,j_{2},\ldots,j_{n}\right)  $ has no repetitions (since the $n$ elements
$j_{1},j_{2},\ldots,j_{n}$ are distinct). Thus, the list $\left(  j_{1}%
,j_{2},\ldots,j_{n}\right)  $ is a list of all elements of $V$ (with no
repetitions). Thus, $\left(  j_{1},j_{2},\ldots,j_{n}\right)  $ has the form
$\left(  j_{1},j_{2},\ldots,j_{n}\right)  =\left(  i_{1},i_{2},\ldots
,i_{n}\right)  $, where $\left(  i_{1},i_{2},\ldots,i_{n}\right)  $ is a list
of all elements of $V$ (with no repetitions) (namely, $\left(  i_{1}%
,i_{2},\ldots,i_{n}\right)  =\left(  j_{1},j_{2},\ldots,j_{n}\right)  $). In
other words,%
\[
\left(  j_{1},j_{2},\ldots,j_{n}\right)  \in\left\{  \left(  i_{1}%
,i_{2},\ldots,i_{n}\right)  \ \mid\ \left(  i_{1},i_{2},\ldots,i_{n}\right)
\text{ is a list of all elements of }V\text{ (with no repetitions)}\right\}
.
\]
In light of $j=\left(  j_{1},j_{2},\ldots,j_{n}\right)  $ and \newline%
$\mathfrak{A}=\left\{  \left(  i_{1},i_{2},\ldots,i_{n}\right)  \ \mid
\ \left(  i_{1},i_{2},\ldots,i_{n}\right)  \text{ is a list of all elements of
}V\text{ (with no repetitions)}\right\}  $, this rewrites as $j\in
\mathfrak{A}$.
\par
Now, forget that we fixed $j$. We thus have shown that $j\in\mathfrak{A}$ for
each $j\in\mathfrak{B}$. In other words, $\mathfrak{B}\subseteq\mathfrak{A}$%
.}. Combining these two inclusions, we obtain $\mathfrak{A}=\mathfrak{B}$.

But we recall that%
\[
\mathfrak{A}=\left\{  \left(  i_{1},i_{2},\ldots,i_{n}\right)  \ \mid\ \left(
i_{1},i_{2},\ldots,i_{n}\right)  \text{ is a list of all elements of }V\text{
(with no repetitions)}\right\}  .
\]
Hence, we have the following equality of summation signs:%
\begin{align}
&  \sum_{\substack{\left(  i_{1},i_{2},\ldots,i_{n}\right)  \text{ is a
list}\\\text{of all elements of }V\\\text{(with no repetitions)}}}\nonumber\\
&  =\sum_{\substack{\left(  i_{1},i_{2},\ldots,i_{n}\right)  \in\mathfrak{A}%
}}\nonumber\\
&  =\sum_{\left(  i_{1},i_{2},\ldots,i_{n}\right)  \in\mathfrak{B}%
}\ \ \ \ \ \ \ \ \ \ \left(  \text{since }\mathfrak{A}=\mathfrak{B}\right)
\nonumber\\
&  =\sum_{\substack{\left(  i_{1},i_{2},\ldots,i_{n}\right)  \text{ is a
list}\\\text{of distinct elements of }V}}\nonumber\\
&  \ \ \ \ \ \ \ \ \ \ \left(  \text{since }\mathfrak{B}=\left\{  \left(
i_{1},i_{2},\ldots,i_{n}\right)  \ \mid\ \left(  i_{1},i_{2},\ldots
,i_{n}\right)  \text{ is a list of distinct elements of }V\right\}  \right)
\nonumber\\
&  =\sum_{\substack{i_{1},i_{2},\ldots,i_{n}\text{ are}\\\text{distinct}%
\\\text{elements of }V}}. \label{pf.cor.polar1.sumeq2}%
\end{align}

We have $X+\left(  -X\right)  =0$. Thus, the element $X+\left(  -X\right)  $
belongs to the center of $\mathbb{L}$ (since the element $0$ belongs to the
center of $\mathbb{L}$). Hence, Theorem \ref{thm.1} (applied to $Y=-X$) yields%
\begin{align*}
&  \sum_{S\subseteq V}\left(  X+\sum_{s\in S}x_{s}\right)  ^{\left\vert
S\right\vert }\left(  -X-\sum_{s\in S}x_{s}\right)  ^{n-\left\vert
S\right\vert }\\
&  =\sum_{\substack{i_{1},i_{2},\ldots,i_{k}\text{ are}\\\text{distinct}%
\\\text{elements of }V}}\left(  \underbrace{X+\left(  -X\right)  }%
_{=0}\right)  ^{n-k}x_{i_{1}}x_{i_{2}}\cdots x_{i_{k}}=\sum_{\substack{i_{1}%
,i_{2},\ldots,i_{k}\text{ are}\\\text{distinct}\\\text{elements of }V}%
}0^{n-k}x_{i_{1}}x_{i_{2}}\cdots x_{i_{k}}\\
&  =\underbrace{\sum_{\substack{i_{1},i_{2},\ldots,i_{k}\text{ are}%
\\\text{distinct}\\\text{elements of }V;\\n\leq k}}}_{\substack{=\sum
_{\substack{i_{1},i_{2},\ldots,i_{k}\text{ are}\\\text{distinct}%
\\\text{elements of }V;\\k=n}}\\\text{(by (\ref{pf.cor.polar1.sumeq1}))}%
}}0^{n-k}x_{i_{1}}x_{i_{2}}\cdots x_{i_{k}}+\sum_{\substack{i_{1},i_{2}%
,\ldots,i_{k}\text{ are}\\\text{distinct}\\\text{elements of }V;\\n>k}%
}\underbrace{0^{n-k}}_{\substack{=0\\\text{(since }n-k>0\\\text{(since
}n>k\text{))}}}x_{i_{1}}x_{i_{2}}\cdots x_{i_{k}}\\
&  \ \ \ \ \ \ \ \ \ \ \ \ \ \ \ \ \ \ \ \ \left(
\begin{array}
[c]{c}%
\text{since each }k\text{-tuple }\left(  i_{1},i_{2},\ldots,i_{k}\right)
\text{ of distinct elements of }V\\
\text{satisfies either }n\leq k\text{ or }n>k\text{ (but not both)}%
\end{array}
\right) \\
&  =\sum_{\substack{i_{1},i_{2},\ldots,i_{k}\text{ are}\\\text{distinct}%
\\\text{elements of }V;\\k=n}}0^{n-k}x_{i_{1}}x_{i_{2}}\cdots x_{i_{k}%
}+\underbrace{\sum_{\substack{i_{1},i_{2},\ldots,i_{k}\text{ are}%
\\\text{distinct}\\\text{elements of }V;\\n>k}}0x_{i_{1}}x_{i_{2}}\cdots
x_{i_{k}}}_{=0}\\
&  =\sum_{\substack{i_{1},i_{2},\ldots,i_{k}\text{ are}\\\text{distinct}%
\\\text{elements of }V;\\k=n}}0^{n-k}x_{i_{1}}x_{i_{2}}\cdots x_{i_{k}%
}=\underbrace{\sum_{\substack{i_{1},i_{2},\ldots,i_{n}\text{ are}%
\\\text{distinct}\\\text{elements of }V}}}_{\substack{=\sum_{\substack{\left(
i_{1},i_{2},\ldots,i_{n}\right)  \text{ is a list}\\\text{of all elements of
}V\\\text{(with no repetitions)}}}\\\text{(by (\ref{pf.cor.polar1.sumeq2}))}%
}}\underbrace{0^{n-n}}_{=0^{0}=1}x_{i_{1}}x_{i_{2}}\cdots x_{i_{n}}\\
&  =\sum_{\substack{\left(  i_{1},i_{2},\ldots,i_{n}\right)  \text{ is a
list}\\\text{of all elements of }V\\\text{(with no repetitions)}}}x_{i_{1}%
}x_{i_{2}}\cdots x_{i_{n}}.
\end{align*}
Comparing this with%
\begin{align*}
&  \sum_{S\subseteq V}\left(  X+\sum_{s\in S}x_{s}\right)  ^{\left\vert
S\right\vert }\left(  \underbrace{-X-\sum_{s\in S}x_{s}}_{=-\left(
X+\sum_{s\in S}x_{s}\right)  }\right)  ^{n-\left\vert S\right\vert }\\
&  =\sum_{S\subseteq V}\left(  X+\sum_{s\in S}x_{s}\right)  ^{\left\vert
S\right\vert }\underbrace{\left(  -\left(  X+\sum_{s\in S}x_{s}\right)
\right)  ^{n-\left\vert S\right\vert }}_{=\left(  -1\right)  ^{n-\left\vert
S\right\vert }\left(  X+\sum_{s\in S}x_{s}\right)  ^{n-\left\vert S\right\vert
}}\\
&  =\sum_{S\subseteq V}\underbrace{\left(  X+\sum_{s\in S}x_{s}\right)
^{\left\vert S\right\vert }\left(  -1\right)  ^{n-\left\vert S\right\vert }%
}_{=\left(  -1\right)  ^{n-\left\vert S\right\vert }\left(  X+\sum_{s\in
S}x_{s}\right)  ^{\left\vert S\right\vert }}\left(  X+\sum_{s\in S}%
x_{s}\right)  ^{n-\left\vert S\right\vert }\\
&  =\sum_{S\subseteq V}\left(  -1\right)  ^{n-\left\vert S\right\vert
}\underbrace{\left(  X+\sum_{s\in S}x_{s}\right)  ^{\left\vert S\right\vert
}\left(  X+\sum_{s\in S}x_{s}\right)  ^{n-\left\vert S\right\vert }%
}_{\substack{=\left(  X+\sum_{s\in S}x_{s}\right)  ^{\left\vert S\right\vert
+\left(  n-\left\vert S\right\vert \right)  }=\left(  X+\sum_{s\in S}%
x_{s}\right)  ^{n}\\\text{(since }\left\vert S\right\vert +\left(
n-\left\vert S\right\vert \right)  =n\text{)}}}\\
&  =\sum_{S\subseteq V}\left(  -1\right)  ^{n-\left\vert S\right\vert }\left(
X+\sum_{s\in S}x_{s}\right)  ^{n},
\end{align*}
we obtain%
\[
\sum_{S\subseteq V}\left(  -1\right)  ^{n-\left\vert S\right\vert }\left(
X+\sum_{s\in S}x_{s}\right)  ^{n}=\sum_{\substack{\left(  i_{1},i_{2}%
,\ldots,i_{n}\right)  \text{ is a list}\\\text{of all elements of
}V\\\text{(with no repetitions)}}}x_{i_{1}}x_{i_{2}}\cdots x_{i_{n}}.
\]
This proves Corollary \ref{cor.polar1}.
\end{proof}
\end{verlong}

Corollary \ref{cor.polar1} has a companion result:

\begin{corollary}
\label{cor.polar2}Let $V$ be a finite set. Let $n=\left\vert V\right\vert $.
For each $s\in V$, let $x_{s}$ be an element of $\mathbb{L}$. Let
$X\in\mathbb{L}$. Let $m\in\mathbb{N}$ be such that $m<n$. Then,%
\[
\sum_{S\subseteq V}\left(  -1\right)  ^{n-\left\vert S\right\vert }\left(
X+\sum_{s\in S}x_{s}\right)  ^{m}=0.
\]

\end{corollary}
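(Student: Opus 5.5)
The plan is a direct expansion argument; it needs no centrality hypothesis and does not use Theorem \ref{thm.1}. Write $W=V\sqcup\left\{\ast\right\}$ for $V$ with one extra symbol $\ast$ adjoined, and set $x_{\ast}:=X$. For each $S\subseteq V$, expand the $m$-th power by distributivity (in the noncommutative ring $\mathbb{L}$, so keeping the order of factors):
\[
\left(X+\sum_{s\in S}x_{s}\right)^{m}=\sum_{\left(w_{1},w_{2},\ldots,w_{m}\right)\in\left(S\cup\left\{\ast\right\}\right)^{m}}x_{w_{1}}x_{w_{2}}\cdots x_{w_{m}}.
\]
Each such $m$-tuple is also an element of $W^{m}$. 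So the left hand side of the corollary becomes a sum over all $m$-tuples $w=\left(w_{1},\ldots,w_{m}\right)\in W^{m}$ of the product $x_{w_{1}}\cdots x_{w_{m}}$, multiplied by a scalar coefficient. To find that coefficient, let $T\left(w\right):=\left\{w_{1},\ldots,w_{m}\right\}\cap V$ be the set of indices from $V$ that occur in $w$. The tuple $w$ occurs in the expansion for $S$ exactly when $T\left(w\right)\subseteq S$. Hence the coefficient of $x_{w_{1}}\cdots x_{w_{m}}$ is
\[
c_{w}=\sum_{\substack{S\subseteq V;\\T\left(w\right)\subseteq S}}\left(-1\right)^{n-\left\vert S\right\vert }.
\]

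The key step is to show that $c_{w}=0$ for every $w$. Substitute $S=T\left(w\right)\cup U$, where $U$ ranges over the subsets of $V\setminus T\left(w\right)$. Then $n-\left\vert S\right\vert =\left\vert V\setminus T\left(w\right)\right\vert -\left\vert U\right\vert$. Writing $r=\left\vert V\setminus T\left(w\right)\right\vert$, we get
\[
c_{w}=\sum_{U\subseteq V\setminus T\left(w\right)}\left(-1\right)^{r-\left\vert U\right\vert }=\sum_{j=0}^{r}\dbinom{r}{j}\left(-1\right)^{r-j}=\left(1-1\right)^{r}.
\]
This vanishes as soon as $r>0$. That is where $m<n$ enters: $\left\vert T\left(w\right)\right\vert \leq m<n$, so $r=n-\left\vert T\left(w\right)\right\vert \geq1$. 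Since the coefficients $c_{w}$ are integers, they commute with everything in $\mathbb{L}$, so factoring them out is harmless. Therefore every $c_{w}$ is $0$ and the whole sum vanishes.

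I expect no step to be a real obstacle. The only point needing care is the regrouping: exchange the order of summation (over $S$ and over tuples $w$) and check that the condition ``$w\in\left(S\cup\left\{\ast\right\}\right)^{m}$'' is equivalent to ``$T\left(w\right)\subseteq S$''. Both are routine.

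A possible alternative would be to deduce the result from Corollary \ref{cor.polar1}, for example by adjoining dummy zero variables or comparing homogeneous components. The direct expansion is cleaner, and it also reproves Corollary \ref{cor.polar1}: when $m=n$, the only surviving tuples are those with $T\left(w\right)=V$, which are exactly the orderings of $V$.
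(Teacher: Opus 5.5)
Your argument is correct, and it takes a genuinely different route from the paper. The paper never expands the power. It first obtains Corollary \ref{cor.polar1} as the specialization $Y=-X$ of Theorem \ref{thm.1}. It then sets $r(Y,W)=\sum_{S\subseteq W}(-1)^{|W|-|S|}(Y+\sum_{s\in S}x_s)^m$ for $W\subseteq V$ and proves the finite-difference recursion $r(Y,W)=r(Y+x_t,W\setminus\{t\})-r(Y,W\setminus\{t\})$ for $t\in W$. Finally it shows $r(Y,W)=0$ whenever $|W|>m$ by strong induction on $|W|$. In the base situation $|W\setminus\{t\}|=m$, Corollary \ref{cor.polar1} says both terms on the right equal the same sum over orderings of $W\setminus\{t\}$, independent of $Y$, so they cancel.

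You instead expand $(X+\sum_{s\in S}x_s)^m$ into ordered words over $S\cup\{\ast\}$ and compute the integer coefficient of each word by inclusion--exclusion. Exchanging the two finite sums is legitimate, and $w\in(S\cup\{\ast\})^m$ is equivalent to $T(w)\subseteq S$, as you say. Since $|T(w)|\le m<n$, every coefficient $(1-1)^{n-|T(w)|}$ vanishes.

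Your route buys brevity and independence. It uses neither Theorem \ref{thm.1} nor Corollary \ref{cor.polar1}, and it proves both Corollaries \ref{cor.polar1} and \ref{cor.polar2} at once: when $m=n$, the coefficient is $1$ if $T(w)=V$ and $0$ otherwise. The paper's route buys a thematic point. It presents these polarization identities as consequences of the noncommutative Abel--Hurwitz identity, which is the purpose of the ``Applications'' section. Its recursion also replaces all bookkeeping of words with a reduction that removes one variable at a time.
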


\begin{proof}
[Proof of Corollary \ref{cor.polar2}.]For any subset $W$ of $V$, we define an
element $s\left(  W\right)  \in\mathbb{L}$ by%
\begin{equation}
s\left(  W\right)  =\sum_{\substack{\left(  i_{1},i_{2},\ldots,i_{k}\right)
\text{ is a list}\\\text{of all elements of }W\\\text{(with no repetitions)}%
}}x_{i_{1}}x_{i_{2}}\cdots x_{i_{k}}, \label{pf.cor.polar2.s(W)=}%
\end{equation}
where $k=\left\vert W\right\vert $.

If $W$ is any subset of $V$, and if $Y$ is any element of $\mathbb{L}$, then
we define an element $r\left(  Y,W\right)  \in\mathbb{L}$ by%
\begin{equation}
r\left(  Y,W\right)  =\sum_{S\subseteq W}\left(  -1\right)  ^{\left\vert
W\right\vert -\left\vert S\right\vert }\left(  Y+\sum_{s\in S}x_{s}\right)
^{m}. \label{pf.cor.polar2.r(Y,W)=}%
\end{equation}

Now, from Corollary \ref{cor.polar1}, we can easily deduce the following claim:

\begin{statement}
\textit{Claim 1:} Let $W$ be any subset of $V$ satisfying $\left\vert
W\right\vert =m$. Let $Y\in\mathbb{L}$. Then, $r\left(  Y,W\right)  =s\left(
W\right)  $.
\end{statement}

[\textit{Proof of Claim 1:} We have $m=\left\vert W\right\vert $. Hence,
Corollary \ref{cor.polar1} (applied to $W$, $Y$ and $m$ instead of $V$, $X$
and $n$) yields%
\[
\sum_{S\subseteq W}\left(  -1\right)  ^{m-\left\vert S\right\vert }\left(
Y+\sum_{s\in S}x_{s}\right)  ^{m}=\sum_{\substack{\left(  i_{1},i_{2}%
,\ldots,i_{m}\right)  \text{ is a list}\\\text{of all elements of
}W\\\text{(with no repetitions)}}}x_{i_{1}}x_{i_{2}}\cdots x_{i_{m}}=s\left(
W\right)
\]
(by (\ref{pf.cor.polar2.s(W)=}), applied to $k=m$). Thus,
(\ref{pf.cor.polar2.r(Y,W)=}) becomes%
\begin{align*}
r\left(  Y,W\right)   &  =\sum_{S\subseteq W}\underbrace{\left(  -1\right)
^{\left\vert W\right\vert -\left\vert S\right\vert }}_{\substack{=\left(
-1\right)  ^{m-\left\vert S\right\vert }\\\text{(since }\left\vert
W\right\vert =m\text{)}}}\left(  Y+\sum_{s\in S}x_{s}\right)  ^{m}\\
&  =\sum_{S\subseteq W}\left(  -1\right)  ^{m-\left\vert S\right\vert }\left(
Y+\sum_{s\in S}x_{s}\right)  ^{m}=s\left(  W\right)  .
\end{align*}
This proves Claim 1.]

A more interesting claim is the following:

\begin{statement}
\textit{Claim 2:} Let $W$ be a subset of $V$. Let $t\in W$. Let $Y\in
\mathbb{L}$. Then,%
\[
r\left(  Y,W\right)  =r\left(  Y+x_{t},W\setminus\left\{  t\right\}  \right)
-r\left(  Y,W\setminus\left\{  t\right\}  \right)  .
\]

\end{statement}

\begin{vershort}
[\textit{Proof of Claim 2:} The definition of $r\left(  Y,W\setminus\left\{
t\right\}  \right)  $ yields%
\begin{align}
r\left(  Y,W\setminus\left\{  t\right\}  \right)   &  =\underbrace{\sum
_{S\subseteq W\setminus\left\{  t\right\}  }}_{\substack{=\sum
_{\substack{S\subseteq W;\\t\notin S}}}}\underbrace{\left(  -1\right)
^{\left\vert W\setminus\left\{  t\right\}  \right\vert -\left\vert
S\right\vert }}_{\substack{=\left(  -1\right)  ^{\left\vert W\right\vert
-1-\left\vert S\right\vert }\\\text{(since }\left\vert W\setminus\left\{
t\right\}  \right\vert =\left\vert W\right\vert -1\\\text{(since }t\in
W\text{))}}}\left(  Y+\sum_{s\in S}x_{s}\right)  ^{m}\nonumber\\
&  =\sum_{\substack{S\subseteq W;\\t\notin S}}\left(  -1\right)  ^{\left\vert
W\right\vert -1-\left\vert S\right\vert }\left(  Y+\sum_{s\in S}x_{s}\right)
^{m}. \label{pf.cor.polar2.c2.pf.short.1}%
\end{align}
The same argument (applied to $Y+x_{t}$ instead of $Y$) yields%
\begin{align}
r\left(  Y+x_{t},W\setminus\left\{  t\right\}  \right)   &  =\sum
_{\substack{S\subseteq W;\\t\notin S}}\underbrace{\left(  -1\right)
^{\left\vert W\right\vert -1-\left\vert S\right\vert }}_{\substack{=\left(
-1\right)  ^{\left\vert W\right\vert -\left\vert S\cup\left\{  t\right\}
\right\vert }\\\text{(since }\left\vert S\right\vert =\left\vert S\cup\left\{
t\right\}  \right\vert -1\\\text{(because }t\notin S\text{))}}}\left(
Y+\underbrace{x_{t}+\sum_{s\in S}x_{s}}_{\substack{=\sum_{s\in S\cup\left\{
t\right\}  }x_{s}\\\text{(since }t\notin S\text{)}}}\right)  ^{m}\nonumber\\
&  =\sum_{\substack{S\subseteq W;\\t\notin S}}\left(  -1\right)  ^{\left\vert
W\right\vert -\left\vert S\cup\left\{  t\right\}  \right\vert }\left(
Y+\sum_{s\in S\cup\left\{  t\right\}  }x_{s}\right)  ^{m}.
\label{pf.cor.polar2.c2.pf.short.5}%
\end{align}

But the definition of $r\left(  Y,W\right)  $ yields%
\begin{align*}
&  r\left(  Y,W\right) \\
&  =\sum_{S\subseteq W}\left(  -1\right)  ^{\left\vert W\right\vert
-\left\vert S\right\vert }\left(  Y+\sum_{s\in S}x_{s}\right)  ^{m}\\
&  =\underbrace{\sum_{\substack{S\subseteq W;\\t\in S}}\left(  -1\right)
^{\left\vert W\right\vert -\left\vert S\right\vert }\left(  Y+\sum_{s\in
S}x_{s}\right)  ^{m}}_{\substack{=\sum_{\substack{S\subseteq W;\\t\notin
S}}\left(  -1\right)  ^{\left\vert W\right\vert -\left\vert S\cup\left\{
t\right\}  \right\vert }\left(  Y+\sum_{s\in S\cup\left\{  t\right\}  }%
x_{s}\right)  ^{m}\\\text{(here, we have substituted }S\cup\left\{  t\right\}
\text{ for }S\text{ in the sum,}\\\text{since the map }\left\{  S\subseteq
W\ \mid\ t\notin S\right\}  \rightarrow\left\{  S\subseteq W\ \mid\ t\in
S\right\}  ,\ S\mapsto S\cup\left\{  t\right\}  \\\text{is a bijection)}%
}}+\sum_{\substack{S\subseteq W;\\t\notin S}}\underbrace{\left(  -1\right)
^{\left\vert W\right\vert -\left\vert S\right\vert }}_{\substack{=-\left(
-1\right)  ^{\left\vert W\right\vert -\left\vert S\right\vert -1}\\=-\left(
-1\right)  ^{\left\vert W\right\vert -1-\left\vert S\right\vert }}}\left(
Y+\sum_{s\in S}x_{s}\right)  ^{m}\\
&  \ \ \ \ \ \ \ \ \ \ \ \ \ \ \ \ \ \ \ \ \left(
\begin{array}
[c]{c}%
\text{since each subset }S\text{ of }W\text{ satisfies either }t\in S\text{ or
}t\notin S\\
\text{(but not both)}%
\end{array}
\right) \\
&  =\underbrace{\sum_{\substack{S\subseteq W;\\t\notin S}}\left(  -1\right)
^{\left\vert W\right\vert -\left\vert S\cup\left\{  t\right\}  \right\vert
}\left(  Y+\sum_{s\in S\cup\left\{  t\right\}  }x_{s}\right)  ^{m}%
}_{\substack{=r\left(  Y+x_{t},W\setminus\left\{  t\right\}  \right)
\\\text{(by (\ref{pf.cor.polar2.c2.pf.short.5}))}}}-\underbrace{\sum
_{\substack{S\subseteq W;\\t\notin S}}\left(  -1\right)  ^{\left\vert
W\right\vert -1-\left\vert S\right\vert }\left(  Y+\sum_{s\in S}x_{s}\right)
^{m}}_{\substack{=r\left(  Y,W\setminus\left\{  t\right\}  \right)
\\\text{(by (\ref{pf.cor.polar2.c2.pf.short.1}))}}}\\
&  =r\left(  Y+x_{t},W\setminus\left\{  t\right\}  \right)  -r\left(
Y,W\setminus\left\{  t\right\}  \right)  .
\end{align*}
This proves Claim 2.]
\end{vershort}

\begin{verlong}
[\textit{Proof of Claim 2:} Let us recall the following (well-known and
simple) fact (which holds for any set $W$ and any element $t\in W$): The map%
\begin{align}
\left\{  S\subseteq W\ \mid\ t\notin S\right\}   &  \rightarrow\left\{
S\subseteq W\ \mid\ t\in S\right\}  ,\nonumber\\
S  &  \mapsto S\cup\left\{  t\right\}  \label{pf.cor.polar2.c2.pf.bij}%
\end{align}
is a bijection\footnote{Its inverse is the map
\begin{align*}
\left\{  S\subseteq W\ \mid\ t\in S\right\}   &  \rightarrow\left\{
S\subseteq W\ \mid\ t\notin S\right\}  ,\\
T  &  \mapsto T\setminus\left\{  t\right\}  .
\end{align*}
}.

The definition of $r\left(  Y,W\setminus\left\{  t\right\}  \right)  $ yields%
\begin{align}
r\left(  Y,W\setminus\left\{  t\right\}  \right)   &  =\underbrace{\sum
_{S\subseteq W\setminus\left\{  t\right\}  }}_{\substack{=\sum
_{\substack{S\subseteq W;\\t\notin S}}\\\text{(since the subsets of
}W\setminus\left\{  t\right\}  \\\text{are precisely the subsets }S\text{ of
}W\\\text{satisfying }t\notin S\text{)}}}\underbrace{\left(  -1\right)
^{\left\vert W\setminus\left\{  t\right\}  \right\vert -\left\vert
S\right\vert }}_{\substack{=\left(  -1\right)  ^{\left\vert W\right\vert
-1-\left\vert S\right\vert }\\\text{(since }\left\vert W\setminus\left\{
t\right\}  \right\vert =\left\vert W\right\vert -1\\\text{(since }t\in
W\text{))}}}\left(  Y+\sum_{s\in S}x_{s}\right)  ^{m}\nonumber\\
&  =\sum_{\substack{S\subseteq W;\\t\notin S}}\left(  -1\right)  ^{\left\vert
W\right\vert -1-\left\vert S\right\vert }\left(  Y+\sum_{s\in S}x_{s}\right)
^{m}. \label{pf.cor.polar2.c2.pf.1}%
\end{align}
The same argument (applied to $Y+x_{t}$ instead of $Y$) yields%
\begin{equation}
r\left(  Y+x_{t},W\setminus\left\{  t\right\}  \right)  =\sum
_{\substack{S\subseteq W;\\t\notin S}}\left(  -1\right)  ^{\left\vert
W\right\vert -1-\left\vert S\right\vert }\left(  Y+x_{t}+\sum_{s\in S}%
x_{s}\right)  ^{m}. \label{pf.cor.polar2.c2.pf.2}%
\end{equation}

But every subset $S$ of $W$ satisfying $t\notin S$ satisfies%
\begin{equation}
\left\vert W\right\vert -1-\left\vert S\right\vert =\left\vert W\right\vert
-\left\vert S\cup\left\{  t\right\}  \right\vert \label{pf.cor.polar2.c2.pf.3}%
\end{equation}
\footnote{\textit{Proof of (\ref{pf.cor.polar2.c2.pf.3}):} Let $S$ be a subset
of $W$ satisfying $t\notin S$. From $t\notin S$, we obtain $\left\vert
S\cup\left\{  t\right\}  \right\vert =\left\vert S\right\vert +1$. Hence,
$\left\vert W\right\vert -\underbrace{\left\vert S\cup\left\{  t\right\}
\right\vert }_{=\left\vert S\right\vert +1}=\left\vert W\right\vert -\left(
\left\vert S\right\vert +1\right)  =\left\vert W\right\vert -1-\left\vert
S\right\vert $. This proves (\ref{pf.cor.polar2.c2.pf.3}).} and
\begin{equation}
x_{t}+\sum_{s\in S}x_{s}=\sum_{s\in S\cup\left\{  t\right\}  }x_{s}
\label{pf.cor.polar2.c2.pf.4}%
\end{equation}
\footnote{\textit{Proof of (\ref{pf.cor.polar2.c2.pf.4}):} Let $S$ be a subset
of $W$ satisfying $t\notin S$. Then, $\left(  S\cup\left\{  t\right\}
\right)  \setminus\left\{  t\right\}  =S$ (since $t\notin S$) and
$t\in\left\{  t\right\}  \subseteq S\cup\left\{  t\right\}  $. Now,
\begin{align*}
\sum_{s\in S\cup\left\{  t\right\}  }x_{s}  &  =x_{t}+\underbrace{\sum
_{\substack{s\in S\cup\left\{  t\right\}  ;\\s\neq t}}}_{\substack{=\sum
_{s\in\left(  S\cup\left\{  t\right\}  \right)  \setminus\left\{  t\right\}
}=\sum_{s\in S}\\\text{(since }\left(  S\cup\left\{  t\right\}  \right)
\setminus\left\{  t\right\}  =S\text{)}}}x_{s}\\
&  \ \ \ \ \ \ \ \ \ \ \left(
\begin{array}
[c]{c}%
\text{here, we have split off the addend for }s=t\\
\text{from the sum (since }t\in S\cup\left\{  t\right\}  \text{)}%
\end{array}
\right) \\
&  =x_{t}+\sum_{s\in S}x_{s}.
\end{align*}
This proves (\ref{pf.cor.polar2.c2.pf.4}).}. Hence,
(\ref{pf.cor.polar2.c2.pf.2}) becomes%
\begin{align}
r\left(  Y+x_{t},W\setminus\left\{  t\right\}  \right)   &  =\sum
_{\substack{S\subseteq W;\\t\notin S}}\underbrace{\left(  -1\right)
^{\left\vert W\right\vert -1-\left\vert S\right\vert }}_{\substack{=\left(
-1\right)  ^{\left\vert W\right\vert -\left\vert S\cup\left\{  t\right\}
\right\vert }\\\text{(since }\left\vert W\right\vert -1-\left\vert
S\right\vert =\left\vert W\right\vert -\left\vert S\cup\left\{  t\right\}
\right\vert \\\text{(by (\ref{pf.cor.polar2.c2.pf.3})))}}}\left(
Y+\underbrace{x_{t}+\sum_{s\in S}x_{s}}_{\substack{=\sum_{s\in S\cup\left\{
t\right\}  }x_{s}\\\text{(by (\ref{pf.cor.polar2.c2.pf.4}))}}}\right)
^{m}\nonumber\\
&  =\sum_{\substack{S\subseteq W;\\t\notin S}}\left(  -1\right)  ^{\left\vert
W\right\vert -\left\vert S\cup\left\{  t\right\}  \right\vert }\left(
Y+\sum_{s\in S\cup\left\{  t\right\}  }x_{s}\right)  ^{m}.
\label{pf.cor.polar2.c2.pf.5}%
\end{align}

But the definition of $r\left(  Y,W\right)  $ yields%
\begin{align*}
&  r\left(  Y,W\right) \\
&  =\sum_{S\subseteq W}\left(  -1\right)  ^{\left\vert W\right\vert
-\left\vert S\right\vert }\left(  Y+\sum_{s\in S}x_{s}\right)  ^{m}\\
&  =\underbrace{\sum_{\substack{S\subseteq W;\\t\in S}}\left(  -1\right)
^{\left\vert W\right\vert -\left\vert S\right\vert }\left(  Y+\sum_{s\in
S}x_{s}\right)  ^{m}}_{\substack{=\sum_{\substack{S\subseteq W;\\t\notin
S}}\left(  -1\right)  ^{\left\vert W\right\vert -\left\vert S\cup\left\{
t\right\}  \right\vert }\left(  Y+\sum_{s\in S\cup\left\{  t\right\}  }%
x_{s}\right)  ^{m}\\\text{(here, we have substituted }S\cup\left\{  t\right\}
\text{ for }S\text{ in the sum,}\\\text{since the map
(\ref{pf.cor.polar2.c2.pf.bij}) is a bijection)}}}+\sum_{\substack{S\subseteq
W;\\t\notin S}}\underbrace{\left(  -1\right)  ^{\left\vert W\right\vert
-\left\vert S\right\vert }}_{\substack{=-\left(  -1\right)  ^{\left\vert
W\right\vert -\left\vert S\right\vert -1}\\=-\left(  -1\right)  ^{\left\vert
W\right\vert -1-\left\vert S\right\vert }\\\text{(since }\left\vert
W\right\vert -\left\vert S\right\vert -1=\left\vert W\right\vert -1-\left\vert
S\right\vert \text{)}}}\left(  Y+\sum_{s\in S}x_{s}\right)  ^{m}\\
&  \ \ \ \ \ \ \ \ \ \ \ \ \ \ \ \ \ \ \ \ \left(
\begin{array}
[c]{c}%
\text{since each subset }S\text{ of }W\text{ satisfies either }t\in S\text{ or
}t\notin S\\
\text{(but not both)}%
\end{array}
\right) \\
&  =\sum_{\substack{S\subseteq W;\\t\notin S}}\left(  -1\right)  ^{\left\vert
W\right\vert -\left\vert S\cup\left\{  t\right\}  \right\vert }\left(
Y+\sum_{s\in S\cup\left\{  t\right\}  }x_{s}\right)  ^{m}+\underbrace{\sum
_{\substack{S\subseteq W;\\t\notin S}}\left(  -\left(  -1\right)  ^{\left\vert
W\right\vert -1-\left\vert S\right\vert }\right)  \left(  Y+\sum_{s\in S}%
x_{s}\right)  ^{m}}_{=-\sum_{\substack{S\subseteq W;\\t\notin S}}\left(
-1\right)  ^{\left\vert W\right\vert -1-\left\vert S\right\vert }\left(
Y+\sum_{s\in S}x_{s}\right)  ^{m}}\\
&  =\underbrace{\sum_{\substack{S\subseteq W;\\t\notin S}}\left(  -1\right)
^{\left\vert W\right\vert -\left\vert S\cup\left\{  t\right\}  \right\vert
}\left(  Y+\sum_{s\in S\cup\left\{  t\right\}  }x_{s}\right)  ^{m}%
}_{\substack{=r\left(  Y+x_{t},W\setminus\left\{  t\right\}  \right)
\\\text{(by (\ref{pf.cor.polar2.c2.pf.5}))}}}+\left(  -\underbrace{\sum
_{\substack{S\subseteq W;\\t\notin S}}\left(  -1\right)  ^{\left\vert
W\right\vert -1-\left\vert S\right\vert }\left(  Y+\sum_{s\in S}x_{s}\right)
^{m}}_{\substack{=r\left(  Y,W\setminus\left\{  t\right\}  \right)
\\\text{(by (\ref{pf.cor.polar2.c2.pf.1}))}}}\right) \\
&  =r\left(  Y+x_{t},W\setminus\left\{  t\right\}  \right)  +\left(  -r\left(
Y,W\setminus\left\{  t\right\}  \right)  \right)  =r\left(  Y+x_{t}%
,W\setminus\left\{  t\right\}  \right)  -r\left(  Y,W\setminus\left\{
t\right\}  \right)  .
\end{align*}
This proves Claim 2.]
\end{verlong}

Now, the following is easy to show by induction:

\begin{statement}
\textit{Claim 3:} Let $W$ be a subset of $V$ satisfying $\left\vert
W\right\vert >m$. Let $Y\in\mathbb{L}$. Then, $r\left(  Y,W\right)  =0$.
\end{statement}

[\textit{Proof of Claim 3:} We shall prove Claim 3 by strong induction over
$\left\vert W\right\vert $:

\textit{Induction step:} Let $k\in\mathbb{N}$. Assume that Claim 3 is proven
in the case when $\left\vert W\right\vert <k$. We must show that Claim 3 holds
in the case when $\left\vert W\right\vert =k$.

We have assumed that Claim 3 is proven in the case when $\left\vert
W\right\vert <k$. In other words,%
\begin{equation}
\left(
\begin{array}
[c]{c}%
\text{if }W\text{ is any subset of }V\text{ satisfying }\left\vert
W\right\vert >m\text{ and }\left\vert W\right\vert <k\text{,}\\
\text{and if }Y\in\mathbb{L}\text{, then }r\left(  Y,W\right)  =0
\end{array}
\right)  . \label{pf.cor.polar2.c3.pf.indass}%
\end{equation}

Now, let $W$ be any subset of $V$ satisfying $\left\vert W\right\vert >m$ and
$\left\vert W\right\vert =k$. Let $Y\in\mathbb{L}$. We shall show that
$r\left(  Y,W\right)  =0$.

\begin{vershort}
We have $\left\vert W\right\vert >m\geq0$. Hence, there exists some $t\in W$.
Consider this $t$.
\end{vershort}

\begin{verlong}
We have $\left\vert W\right\vert >m\geq0$ (since $m\in\mathbb{N}$). Hence, the
set $W$ is nonempty. In other words, there exists some $t\in W$. Consider this
$t$. Clearly, $W\setminus\left\{  t\right\}  $ is a subset of $V$ (since
$W\setminus\left\{  t\right\}  \subseteq W\subseteq V$).
\end{verlong}

\begin{vershort}
From $t\in W$, we obtain $\left\vert W\setminus\left\{  t\right\}  \right\vert
=\underbrace{\left\vert W\right\vert }_{>m}-1>m-1$, so that $\left\vert
W\setminus\left\{  t\right\}  \right\vert \geq m$. Thus, we are in one of the
following two cases:
\end{vershort}

\begin{verlong}
From $\left\vert W\right\vert >m$, we obtain $\left\vert W\right\vert \geq
m+1$ (since $\left\vert W\right\vert $ and $m$ are integers).

From $t\in W$, we obtain $\left\vert W\setminus\left\{  t\right\}  \right\vert
=\underbrace{\left\vert W\right\vert }_{\geq m+1}-\,1\geq\left(  m+1\right)
-1=m$. Thus, we are in one of the following two cases:
\end{verlong}

\textit{Case 1:} We have $\left\vert W\setminus\left\{  t\right\}  \right\vert
=m$.

\textit{Case 2:} We have $\left\vert W\setminus\left\{  t\right\}  \right\vert
>m$.

Let us first consider Case 1. In this case, we have $\left\vert W\setminus
\left\{  t\right\}  \right\vert =m$. Hence, Claim 1 (applied to $W\setminus
\left\{  t\right\}  $ instead of $W$) yields $r\left(  Y,W\setminus\left\{
t\right\}  \right)  =s\left(  W\setminus\left\{  t\right\}  \right)  $. Also,
Claim 1 (applied to $W\setminus\left\{  t\right\}  $ and $Y+x_{t}$ instead of
$W$ and $Y$) yields $r\left(  Y+x_{t},W\setminus\left\{  t\right\}  \right)
=s\left(  W\setminus\left\{  t\right\}  \right)  $. Now, Claim 2 yields%
\[
r\left(  Y,W\right)  =\underbrace{r\left(  Y+x_{t},W\setminus\left\{
t\right\}  \right)  }_{=s\left(  W\setminus\left\{  t\right\}  \right)
}-\underbrace{r\left(  Y,W\setminus\left\{  t\right\}  \right)  }_{=s\left(
W\setminus\left\{  t\right\}  \right)  }=s\left(  W\setminus\left\{
t\right\}  \right)  -s\left(  W\setminus\left\{  t\right\}  \right)  =0.
\]
Thus, $r\left(  Y,W\right)  =0$ is proven in Case 1.

Let us now consider Case 2. In this case, we have $\left\vert W\setminus
\left\{  t\right\}  \right\vert >m$. Also, $\left\vert W\setminus\left\{
t\right\}  \right\vert =\left\vert W\right\vert -1<\left\vert W\right\vert
=k$. Hence, (\ref{pf.cor.polar2.c3.pf.indass}) (applied to $W\setminus\left\{
t\right\}  $ instead of $W$) yields $r\left(  Y,W\setminus\left\{  t\right\}
\right)  =0$. Also, (\ref{pf.cor.polar2.c3.pf.indass}) (applied to
$W\setminus\left\{  t\right\}  $ and $Y+x_{t}$ instead of $W$ and $Y$) yields
$r\left(  Y+x_{t},W\setminus\left\{  t\right\}  \right)  =0$. Now, Claim 2
yields%
\[
r\left(  Y,W\right)  =\underbrace{r\left(  Y+x_{t},W\setminus\left\{
t\right\}  \right)  }_{=0}-\underbrace{r\left(  Y,W\setminus\left\{
t\right\}  \right)  }_{=0}=0-0=0.
\]
Thus, $r\left(  Y,W\right)  =0$ is proven in Case 2.

\begin{vershort}
We have now proven $r\left(  Y,W\right)  =0$ in each of the two Cases 1 and 2.
Hence, $r\left(  Y,W\right)  =0$ always holds.
\end{vershort}

\begin{verlong}
We have now proven $r\left(  Y,W\right)  =0$ in each of the two Cases 1 and 2.
Since these two Cases cover all possibilities, we thus conclude that $r\left(
Y,W\right)  =0$ always holds.
\end{verlong}

Now, let us forget that we fixed $W$ and $Y$. We thus have proven that if $W$
is any subset of $V$ satisfying $\left\vert W\right\vert >m$ and $\left\vert
W\right\vert =k$, and if $Y\in\mathbb{L}$, then $r\left(  Y,W\right)  =0$. In
other words, Claim 3 holds in the case when $\left\vert W\right\vert =k$. This
completes the induction step. Thus, Claim 3 is proven by strong induction.]

Now, recall that $V$ is a subset of $V$ satisfying $\left\vert V\right\vert
=n>m$ (since $m<n$). Hence, Claim 3 (applied to $W=V$ and $Y=X$) yields
$r\left(  X,V\right)  =0$. But the definition of $r\left(  X,V\right)  $
yields%
\[
r\left(  X,V\right)  =\sum_{S\subseteq V}\underbrace{\left(  -1\right)
^{\left\vert V\right\vert -\left\vert S\right\vert }}_{\substack{=\left(
-1\right)  ^{n-\left\vert S\right\vert }\\\text{(since }\left\vert
V\right\vert =n\text{)}}}\left(  X+\sum_{s\in S}x_{s}\right)  ^{m}%
=\sum_{S\subseteq V}\left(  -1\right)  ^{n-\left\vert S\right\vert }\left(
X+\sum_{s\in S}x_{s}\right)  ^{m}.
\]
Hence,%
\[
\sum_{S\subseteq V}\left(  -1\right)  ^{n-\left\vert S\right\vert }\left(
X+\sum_{s\in S}x_{s}\right)  ^{m}=r\left(  X,V\right)  =0.
\]
This proves Corollary \ref{cor.polar2}.
\end{proof}

\section{Questions}

The above results are not the first generalizations of the classical
Abel--Hurwitz identities; there are various others. In particular,
generalizations appear in \cite{Strehl92}, \cite[Theorem 1.2]{Carrel14},
\cite{Johns96}, \cite{Kalai79}, \cite{Pitman02}, \cite[\S 1.6]{Riorda68} (see
the end of \cite{qedmo09} for some of these) and \cite{KelPos}. We have not
tried to lift these generalizations into our noncommutative setting, but we
suspect that this is possible.

\end{document}